\documentclass[12pt]{article}

\usepackage{amsmath,amssymb,bm,amsthm,mathrsfs,amscd}
\usepackage{here}
\usepackage[pdftex]{graphicx}
\usepackage{tikz}
\usepackage{color}
\theoremstyle{definition}
\newtheorem{Def}{Def}[section]
\newtheorem{Defi}[Def]{Definition}
\newtheorem{Them}[Def]{Theorem}
\newtheorem{Lem}[Def]{Lemma}
\newtheorem{Cor}[Def]{Corollary}
\newtheorem{Prop}[Def]{Proposition}
\newtheorem{Examp}[Def]{Example}

\DeclareMathOperator{\ord}{ord}

\numberwithin{equation}{section}

\setlength{\oddsidemargin}{0mm}
\setlength{\evensidemargin}{0mm}
\setlength{\textwidth}{168mm}
\setlength{\topmargin}{-5mm}
\setlength{\textheight}{200mm}


\title{Freeness for restriction arrangements of the extended Shi and Catalan arrangements}
\author{Norihiro Nakashima\thanks{Department of Mathematics, Nagoya Institute of Technology, Aichi, 466-8555, Japan. Email: nakashima@nitech.ac.jp}\quad
and\quad Shuhei Tsujie\thanks{Department of Mathematics, Hokkaido University of Education, Asahikawa, Hokkaido 070-8621, Japan. Email: tsujie.shuhei@a.hokkyodai.ac.jp}}

\date{}
\begin{document}

\maketitle
\begin{center}
  \emph{Dedicated to Professor Mutsumi Saito on his 63rd birthday}
\end{center}
\begin{abstract}
The extended Shi and Catalan arrangements are well investigated arrangements. In this paper, we prove that the cone of the extended Catalan arrangement of type A is always hereditarily free, while we determine the dimension in which the cone of the extended Shi arrangement of type A is hereditarily free. For this purpose, using digraphs, we define a class of arrangements which is closed under restriction, and which contains the extended Shi and Catalan arrangements. We also characterize the freeness for the cone of this arrangement by graphical conditions.

\noindent
{\bf Key Words:}
hyperplane arrangement, hereditarily free, extended Catalan arrangement, extended Shi arrangement, digraph
\vspace{2mm}

\noindent
{\bf 2020 Mathematics Subject Classification:}
Primary 32S22, Secondary 52C35.
\end{abstract}

\section{Introduction}
Let $\mathbb{K}$ be a field of characteristic zero.
A (hyperplane) $\ell$-arrangement, or simply an arrangement, is a finite set of affine hyperplanes in an $\ell$-dimensional vector space, or affine space, over $\mathbb{K}$.
An arrangement $\mathscr{A}$ is said to be central if all hyperplanes in $\mathscr{A}$ contain the origin.
Let $V$ be a finite set of $\ell$ elements, let $S$ be the polynomial ring over $\mathbb{K}$ in $\ell$ variables indexed by $V$, and let $\partial_i\ (i\in V)$ be the partial derivative.
Let $\alpha_H$ be a defining polynomial of $H\in\mathscr{A}$, that is, $H=\{\bm{x}\in\mathbb{K}^{V}\mid \alpha_H(\bm{x})=0\}$.
Then a central $\ell$-arrangement $\mathscr{A}$ is free if the $S$-module
\begin{align*}
D(\mathscr{A}):=\left\{\theta\in\sum_{i\in V}S\partial_i\,\middle|\,\theta(\alpha_H)\in\alpha_H S\ {\rm for\ all}\ H\in\mathscr{A}\right\}
\end{align*}
is a free $S$-module.
When $\mathscr{A}$ is a non-central arrangement, we consider
the freeness for the cone $c\mathscr{A}$ instead of $\mathscr{A}$, where the $(\ell+1)$-arrangement $c\mathscr{A}$ is defined by
\begin{align*}
c\mathscr{A}:=\{\{z=0\}\}\cup\{\{\alpha_H(\bm{x})+\alpha_H(\bm{0})(z-1)=0\}\mid H\in\mathscr{A}\}.
\end{align*}

For any arrangement $\mathscr{A}$ (including a non-central arrangement), let
\begin{align*}
L(\mathscr{A}):=\left\{\bigcap_{H\in\mathscr{B}}H\,\middle|\, \mathscr{B}\subseteq\mathscr{A},\ \bigcap_{H\in\mathscr{B}}H\neq \emptyset\right\},
\end{align*}
and we define the restriction arrangement $\mathscr{A}^X$ by
\begin{align*}
\mathscr{A}^X:=\left\{H\cap X\,\middle|\, H\in\mathscr{A},\,X\not\subseteq H,\,H\cap X\neq \emptyset\right\}
\end{align*}
for $X\in L(\mathscr{A})$.
A central arrangement $\mathscr{A}$ is said to be hereditarily free if all restriction arrangements $\mathscr{A}^X$ are free for all $X\in L(\mathscr{A})$.
There is a famous conjecture of freeness for restriction arrangements given by Orlik \cite{Orlik-intro-arr}, which asserts that if $\mathscr{A}$ is a free arrangement, then $\mathscr{A}^H$ is free for any $H\in\mathscr{A}$.
Edelman and Reiner \cite{Edelman-Reiner-orlikconje} found a counterexample of this conjecture.
After that several investigations are interested in hereditarily free arrangements.
Orlik and Terao \cite{Orlik-Terao-hered-free} proved that all Coxeter arrangements are hereditarily free, while Douglass \cite{Douglass} gave another proof of this result using a Lie theoretic argument.
Later Hoge and R\"{o}hrle \cite{Hoge-Roehrle-hered-free} proved that the finite complex reflection arrangements are hereditarily free.

At the same time, the freeness for deformations of the braid arrangement (or Coxeter arrangement of Type A) $\mathcal{A}_{\ell-1}:=\{\{x_i-x_j=0\}\mid i,j\in V,\ i\neq j\}$ is a central topic in the study of free arrangements.
Some remarkable deformations are investigated, using a digraph $G=(V,E)$, where $E\subseteq \{(i,j)\mid i,j\in V, i\neq j \}$ is a set of directed edges.
For $i,j\in V\ (i\neq j)$, we define
\begin{align*}
\varepsilon_G(i,j):=
\begin{cases}
1&{\rm if}\ (i,j)\in E,\\ 0&{\rm if}\ (i,j)\not\in E.
\end{cases}
\end{align*}
In particular, the arrangement
\begin{align}\label{eq-derom-by-Athana}
\mathscr{A}_m(G):=\left\{\{x_i-x_j=c\}\,\middle|\,i,j\in V,i\neq j,-m-\varepsilon_G(i,j)\leq c\leq m+\varepsilon_G(j,i)\right\},
\end{align}
includes two important arrangements; the extended Catalan arrangement 
\begin{align*}
\mathcal{C}_{\ell-1}(m):=\{\{x_i-x_j=c\}\mid i,j\in V,i\neq j,-m\leq c\leq m\}
\end{align*}
of type A and the extended Shi arrangement 
\begin{align*}
\mathcal{S}_{\ell-1}(m):=\{\{x_i-x_j=c\}\mid i,j\in\{1,\dots,\ell\},i<j,-m\leq c\leq m+1\}
\end{align*}
of type A.
Athanasiadis \cite{Athanasiadis2} proved that $c\mathcal{S}_{\ell-1}(m)$ is free, while Yoshinaga \cite{Yoshinaga-ER} extended this result to more general settings including $c\mathcal{S}_{\ell-1}(m)$ and $c\mathcal{C}_{\ell-1}(m)$.
When $m=0$, Athanasiadis \cite{Athanasiadis3} conjectured that $c\mathscr{A}_m(G)$ is free if and only if there exists a total order $\preceq$ on $V$ such that $G$ satisfies
\begin{itemize}
\item[(A1)] If $i\prec k,j\prec k$ and $(i,j)\in E$, then $(i,k)\in E$ or $(k,j)\in E$.
\item[(A2)] If $i\prec k,j\prec k$ and $(i,k)\in E, (k,j)\in E$, then $(i,j)\in E$
\end{itemize}
This conjecture was proved for any $m\in\mathbb{Z}_{\geq 0}$.
The proof of ``if'' part was given by Abe, Nuida and Numata \cite{Abe-Nuida-Numata} and the proof of ``only if'' part was given by Abe \cite{Abe}.
There are several studies characterizing freeness of deformations of the Coxeter arrangements of types A and B in terms of graphs (See \cite{abe2017freeness-joctsa, abe2021vertex-weighted, bailey????inductively-p, bailey1997tilings, edelman1994free-mz, suyama2019signed-dm, suyama2019vertex-weighted-dcg, torielli2020freeness-tejoc, tran2022mat-free, wang2022free-gac}).

In this paper, we aim to prove that $c\mathcal{C}_{\ell-1}(m)$ is hereditarily free and that $c\mathcal{S}_{\ell-1}(m)$ is hereditarily free if and only if $\ell\leq 5$.
For this purpose, it is important to know a closed class under restriction which contains $\mathcal{C}_{\ell-1}(m)$ and $\mathcal{S}_{\ell-1}(m)$.
The class of arrangements $\mathscr{A}_m(G)$ contains $\mathcal{C}_{\ell-1}(m)$ and $\mathcal{S}_{\ell-1}(m)$, but unfortunately is not closed under restriction.
So we define the following arrangement $\mathscr{A}(\bm{n},G)$ which contains $\mathscr{A}_m(G)$, and prove that the class of arrangement $\mathscr{A}(\bm{n},G)$ is closed under restriction. Note that this class may not be a minimal class which is closed under restriction and contains $\mathcal{C}_{\ell-1}(m)$ and $\mathcal{S}_{\ell-1}(m)$.
\begin{Defi}
Let $\ell\geq 2$. Let $\bm{n}=(n_i)_{i\in V}\in \mathbb{Z}_{\geq 0}^{V}$ be a tuple indexed by $V$ and let $G=(V,E)$ be a digraph.
We define the arrangement
\begin{align*}
\mathscr{A}(\bm{n},G):=\left\{\{x_i-x_j=c\}\,\middle|\,i,j\in V,i\neq j,c\in\mathbb{Z},-n_i-\varepsilon_G(i,j)\leq c\leq n_j+\varepsilon_G(j,i)\right\}.
\end{align*}
\end{Defi}
To prove that the class is closed under restriction, we define the contraction $(\bm{n}^{H},G^{H})$ of a pair $(\bm{n},G)$ so that $\mathscr{A}(\bm{n},G)^H=\mathscr{A}(\bm{n}^{H},G^{H})$ up to affine equivalence.
The contraction is different from the ordinary vertex contraction of digraphs.
In addition, if $E=\emptyset$, then $\mathscr{A}((m,\dots,m),G)=\mathcal{C}_{\ell-1}(m)$, while if $V=\{1,\dots,\ell\}$ and $E=\{(j,i)\mid i<j\}$, then $\mathscr{A}((m,\dots,m),G)=\mathcal{S}_{\ell-1}(m)$.

We also generalize the freeness characterization by Athanasiadis for $c\mathscr{A}_m(G)$ to $c\mathscr{A}(\bm{n},G)$. In other words, we prove that $c\mathscr{A}(\bm{n},G)$ is free if and only if there exists a total order $\preceq$ on $V$ such that $G$ satisfies (A1) and (A2).
In the proof, we use several results for characteristic polynomials, signed graphs, and multi-arrangements.
After that we prove that $c\mathscr{A}(\bm{n},G)$ is hereditarily free if $E=\emptyset$.
In addition, when $V=\{1,\dots,\ell\}$ and $E=\{(j,i)\mid i<j\}$, we prove that $c\mathscr{A}(\bm{n},G)$ is hereditarily free if and only if $\ell\leq 5$, using the results above.
In particular, we have that $c\mathcal{C}_{\ell-1}(m)$ is hereditarily free, while $c\mathcal{S}_{\ell-1}(m)$ is hereditarily free if and only if $\ell\leq 5$.

The organization of this paper is as follows. In Section \ref{sec-A(n,G)-res}, we prove that the class of arrangements $\mathscr{A}(\bm{n},G)$ is closed under restriction.
In Section \ref{sec-free-A(n,G)}, we characterize the freeness for $c\mathscr{A}(\bm{n},G)$.
Finally in Section \ref{sec-hereditarily-free}, we prove that $c\mathcal{C}_{\ell-1}(m)$ is hereditarily free and that $c\mathcal{S}_{\ell-1}(m)$ is hereditarily free if and only if $\ell\leq 5$.

\section{The arrangements $\mathscr{A}(\bm{n},G)$ and their restrictions}\label{sec-A(n,G)-res}
In this paper, we often denote by $\{x_i-x_j=c\}$ the hyperplane $\{\bm{x}\in \mathbb{K}^V\mid x_i-x_j=c\}$.
Let $\ell\geq 2$. Let $\bm{n}=(n_i)_{i\in V}\in \mathbb{Z}_{\geq 0}^{V}$ be a tuple indexed by $V$ and let $G=(V,E)$ be a digraph.
We recall that the definition of $\mathscr{A}(\bm{n},G)$ is
\begin{align}
\mathscr{A}(\bm{n},G)=\left\{\{x_i-x_j=c\}\,\middle|\,i,j\in V,i\neq j,c\in\mathbb{Z},-n_i-\varepsilon_G(i,j)\leq c\leq n_j+\varepsilon_G(j,i)\right\}.\label{eq-defAnG}
\end{align}
Since $\{x_i-x_j=c\}=\{x_j-x_i=-c\}$, the arrangement $\mathscr{A}(\bm{n},G)$ also has the following description:
\begin{align}\label{eq-AnG3}
\mathscr{A}(\bm{n},G)=\left\{\{x_i-x_j=c\}\,\middle|\, i,j\in V,i\neq j,c\in\mathbb{Z},0\leq c\leq n_j+\varepsilon_G(j,i)\right\}.
\end{align}
We note that the number of hyperplanes in $\mathscr{A}(\bm{n},G)$ which include $x_i$ and $x_j$ in the defining polynomials is $n_i+n_j+\varepsilon_G(i,j)+\varepsilon_G(j,i)+1$.
\begin{Examp}\label{ex-A(n,G)}
Let $V=\{1,2,3\}$, $E=\{(2,1),(2,3)\}$ and $\bm{n}=(0,0,1)$. Then
\begin{align*}
\mathscr{A}(\bm{n},G)=\left\{
\begin{array}{cccc}
\{x_1-x_2=0\},&\{x_1-x_2=1\},&\{x_1-x_3=0\},&\{x_1-x_3=1\},\\
\{x_2-x_3=-1\},&\{x_2-x_3=0\},&\{x_2-x_3=1\}& 
\end{array}
\right\}.
\end{align*}
\begin{figure}[H]
 \centering
\caption{The pair $(\bm{n},G)$ in Example \ref{ex-A(n,G)}}

\begin{tabular}{c}
$\bm{n}=(0,0,1)$\\
\begin{tikzpicture}[node distance={15mm}, main/.style = {draw, circle}] 
\node[main] (1) {$2$}; \node[main] (2) [below left of=1] {$1$}; \node[main] (3) [below right of=1] {$3$};
\draw[->,>=stealth] (1) to (2); \draw[->,>=stealth] (1) to (3);
\end{tikzpicture}
\end{tabular}
\end{figure}
\end{Examp}

Two arrangements $\mathscr{A}_1$ and $\mathscr{A}_2$ in affine spaces $A_1$ and $A_2$ are said to be affinely equivalent if there exists an affine isomorphism $\phi:A_1\rightarrow A_2$ such that $\mathscr{A}_2=\phi(\mathscr{A}_1):=\{\phi(H)\mid H\in\mathscr{A}_1\}$.
For $H\in\mathscr{A}(\bm{n},G)$, we define the contraction $(\bm{n}^{H},G^{H})$ of a pair $(\bm{n},G)$ so that $\mathscr{A}(\bm{n},G)^H=\mathscr{A}(\bm{n}^{H},G^{H})$ up to affine equivalence.
This means that the class of arrangements $\mathscr{A}(\bm{n},G)$ is closed under restriction.
The proof will appear later in Theorem \ref{thm-restAnGtoH}.
\begin{Defi}\label{def-contract-H}
Let $(\bm{n},G)$ be a pair of a tuple $\bm{n}=(n_i)_{i\in V}\in \mathbb{Z}_{\geq 0}^{V}$ and a digraph $G=(V,E)$.
For $H\in\mathscr{A}(\bm{n},G)$, we can write $H=\{x_s-x_t=w\}$ for some $0\leq w\leq n_t+\varepsilon_G(t,s)$ by the equality \eqref{eq-AnG3}.
Then we define the contraction $G^{H}:=(V^{H},E^{H})$ as follows.
\begin{itemize}
\item $V^{H}:=\left(V\setminus\{s,t\}\right)\cup\{u\}$,
where $u$ is a new vertex.
\item For $i\in V^H\setminus\{u\}$,
\begin{align*}
&(i,u)\in E^H\Leftrightarrow
\begin{cases}
(i,t)\in E&\quad{\rm if}\ w>0,\\
(i,s)\in E\ {\rm or}\ (i,t)\in E&\quad{\rm if}\ w=0,
\end{cases}
\\
&(u,i)\in E^H\Leftrightarrow
\begin{cases}
(s,i)\in E&\quad{\rm if}\ n_s+w>n_t,\\
(s,i)\in E\ {\rm or}\ (t,i)\in E&\quad{\rm if}\ n_s+w=n_t,\\
(t,i)\in E&\quad{\rm if}\ n_s+w<n_t.
\end{cases}
\end{align*}
\item For $i,j\in V^H\setminus\{u\}$,
\begin{align*}
(i,j)\in E^H\Leftrightarrow (i,j)\in E.
\end{align*}
\end{itemize}
\end{Defi}
\begin{Defi}
Let $\bm{n}\in\mathbb{Z}_{\geq 0}^{V}$ be a tuple.
For $H\in\mathscr{A}(\bm{n},G)$, we can write $H=\{x_s-x_t=w\}$ for some $0\leq w\leq n_t+\varepsilon_G(t,s)$.
Then we define
\begin{align*}
n^{H}_i:=
\begin{cases}
n_i&\quad{\rm if}\ i\in V^{H}\setminus\{u\},\\
\max\{n_s+w,n_t\}&\quad{\rm if}\ i=u,
\end{cases}
\end{align*}
and $\bm{n}^H:=(n_i^H)_{i\in V^H}$.
\end{Defi}

\noindent
{\it Remark.}
\begin{itemize}
\item[(1)] Let $H\in\mathscr{A}(\bm{n},G)$, and we write $H=\{x_s-x_t=w\}$ for some $0\leq w\leq n_t+\varepsilon_G(t,s)$. Then $w=0$ and $n_s=n_t$ if and only if the contraction $G^H$ is the same as the digraph obtained from ordinary vertex contraction of vertices $s$ and $t$. Meanwhile in this case, $n_u^H=n_s=n_t$.
\item[(2)] Let $H=\{x_s-x_t=n_t+\varepsilon_G(t,s)\}\in\mathscr{A}(\bm{n},G)$. If $(t,s)\in E$, then $H=\{x_s-x_t=n_t+1\}$ while $\mathscr{A}(\bm{n},G)\setminus\{H\}=\mathscr{A}(\bm{n},(V,E\setminus\{(t,s)\}))$.
So deleting the hyperplane $H$ corresponds to deleting the edge $(t,s)$.
In this case, $n_u^H=n_s+n_t+1$, the edges in $E^H$ arriving at the new vertex $u$ are obtained from the edges in $E$ arriving at $t$, while the edges in $E^H$ departing from $u$ are obtained from the edges in $E$ departing from $s$.
\end{itemize}
\begin{Examp}\label{ex-(n,G)-contraction}
(1) Let $V=\{1,2,3,4\}$, $E=\{(2,1),(3,1),(4,1),(4,2),(4,3)\}$ and $\bm{n}=(1,0,0,1)$.
Let $H=\{x_1-x_4=0\}\in\mathscr{A}(\bm{n},G)$.
Then by writing $u=4$, we have that $\bm{n}^H=(n_2,n_3,n_4)=(0,0,1)$ and $G^H=(\{2,3,4\},\{(2,4),(4,2),(3,4),(4,3)\})$.
\begin{figure}[H]
 \centering
\caption{The pairs $(\bm{n},G)$ and $(\bm{n}^H,G^H)$ in Example \ref{ex-(n,G)-contraction} (1)}

\begin{tabular}{c@{\hspace{30mm}}c}
$\bm{n}=(1,0,0,1)$&$\bm{n}^H=(0,0,1)$\\
\begin{tikzpicture}[node distance={15mm}, main/.style = {draw, circle}] 
\node[main] (1) {$1$}; \node[main] (2) [below left of=1] {$2$}; \node[main] (3) [below right of=1] {$3$}; \node[main] (4) [below right of=2] {$4$};
\draw[->,>=stealth] (2) -- (1); \draw[->,>=stealth] (3) -- (1); \draw[->,>=stealth] (4) -- (1); \draw[->,>=stealth] (4) -- (2); \draw[->,>=stealth] (4) -- (3); \end{tikzpicture}
&\begin{tikzpicture}[node distance={15mm}, main/.style = {draw, circle}] 
\node[main] (1) {$4$}; \node[main] (2) [below left of=1] {$2$}; \node[main] (3) [below right of=1] {$3$};
\draw[->,>=stealth] (2) to [out=65,in=200,looseness=0.7] (1); \draw[->,>=stealth] (1) to [out=245,in=25,looseness=0.7] (2); \draw[->,>=stealth] (1) to [out=335,in=115,looseness=0.7] (3); \draw[->,>=stealth] (3) to [out=155,in=295,looseness=0.7] (1);
\end{tikzpicture}
\end{tabular}
\end{figure}

(2) Let $V=\{1,2,3,4\}$, $E=\{(j,i)\mid1\leq i<j\leq 4\}$ and $\bm{n}=\bm{0}$.
Let $H=\{x_1-x_3=n_3+\varepsilon_G(3,1)\}=\{x_1-x_3=1\}\in\mathscr{A}(\bm{n},G)$.
Then by writing $u=3$, we have that $\bm{n}^H=(n_2,n_3,n_4)=(0,1,0)$ and $G^H=(\{2,3,4\},\{(4,2),(4,3)\})$.

\begin{figure}[H]
 \centering
\caption{The pairs $(\bm{n},G)$, $(\bm{n}^{H},G^{H})$ in Example \ref{ex-(n,G)-contraction} (2)}

\begin{tabular}{c@{\hspace{10mm}}c}
$\bm{n}=(0,0,0,0)$&$\bm{n}^{H}=(0,1,0)$\\
\begin{tikzpicture}[node distance={15mm}, main/.style = {draw, circle}] 
\node[main] (1) {$1$}; \node[main] (2) [below left of=1] {$2$}; \node[main] (3) [below right of=1] {$3$}; \node[main] (4) [below right of=2] {$4$};
\draw[->,>=stealth] (2) -- (1); \draw[->,>=stealth] (3) -- (1); \draw[->,>=stealth] (3) -- (2); \draw[->,>=stealth] (4) -- (1); \draw[->,>=stealth] (4) -- (2); \draw[->,>=stealth] (4) -- (3);
\end{tikzpicture}
&\begin{tikzpicture}[node distance={15mm}, main/.style = {draw, circle}] 
\node[main] (1) {$3$}; \node[main] (2) [below left of=1] {$2$}; \node[main] (4) [below right of=2] {$4$};
\draw[->,>=stealth] (4) -- (1); \draw[->,>=stealth] (4) -- (2);
\end{tikzpicture}
\end{tabular}
\end{figure}
\end{Examp}
For integers $a,b\in\mathbb{Z}$ with $a\leq b$, we define $[a,b]:=\{c\in\mathbb{Z}\mid a\leq c\leq b\}$.
\begin{Lem}\label{lem-max-ninu}
Let $H\in\mathscr{A}(\bm{n},G)$, and we write $H=\{x_s-x_t=w\}$ for some $0\leq w\leq n_t+\varepsilon_G(t,s)$.
Then for $i\in V^H\setminus\{u\}$, we have
\begin{itemize}
\item[(1)] $\max\{n_i-w+\varepsilon_{G}(i,s),\ n_i+\varepsilon_{G}(i,t)\}=n_i^H+\varepsilon_{G^H}(i,u)$,
\item[(2)] $\max\{n_s+w+\varepsilon_{G}(s,i),\ n_t+\varepsilon_{G}(t,i)\}=n_u^H+\varepsilon_{G^H}(u,i)$,
\item[(3)]
\begin{multline*}
[-n_i+w-\varepsilon_{G}(i,s),n_s+w+\varepsilon_{G}(s,i)]\cup[-n_i-\varepsilon_{G}(i,t),n_t+\varepsilon_{G}(t,i)]=\\
[-n_i^H-\varepsilon_{G^H}(i,u),n_u^H+\varepsilon_{G^H}(u,i)].
\end{multline*}
\end{itemize}
\end{Lem}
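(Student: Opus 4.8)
The plan is to establish (1) and (2) by a direct case analysis straight from Definition~\ref{def-contract-H}, and then to deduce (3) from (1) and (2) together with an elementary ``no gap'' check showing that the union of the two integer intervals on the left-hand side of (3) is again an interval.

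For (1), note that $i\neq u$ forces $n_i^H=n_i$, so after cancelling $n_i$ the claim becomes $\max\{-w+\varepsilon_G(i,s),\,\varepsilon_G(i,t)\}=\varepsilon_{G^H}(i,u)$. I would split on whether $w>0$ or $w=0$. If $w>0$, then $-w+\varepsilon_G(i,s)\leq 0$, so the left side equals $\varepsilon_G(i,t)$, which is $\varepsilon_{G^H}(i,u)$ by the first clause of the definition of $E^H$. If $w=0$, the left side is $\max\{\varepsilon_G(i,s),\varepsilon_G(i,t)\}$, which is $1$ exactly when $(i,s)\in E$ or $(i,t)\in E$, i.e. exactly when $(i,u)\in E^H$. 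For (2), here $n_u^H=\max\{n_s+w,n_t\}$ and I would split according to the sign of $(n_s+w)-n_t$. When $n_s+w>n_t$ we have $n_u^H=n_s+w$ and $(u,i)\in E^H\Leftrightarrow(s,i)\in E$; since $n_s+w\geq n_t+1$ one gets $n_t+\varepsilon_G(t,i)\leq n_s+w\leq n_s+w+\varepsilon_G(s,i)$, so the maximum is $n_s+w+\varepsilon_G(s,i)=n_u^H+\varepsilon_{G^H}(u,i)$. The case $n_s+w<n_t$ is symmetric, and in the case $n_s+w=n_t$ the maximum equals $n_u^H+\max\{\varepsilon_G(s,i),\varepsilon_G(t,i)\}=n_u^H+\varepsilon_{G^H}(u,i)$ by the middle clause.

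For (3), write $A:=[-n_i+w-\varepsilon_G(i,s),\,n_s+w+\varepsilon_G(s,i)]$ and $B:=[-n_i-\varepsilon_G(i,t),\,n_t+\varepsilon_G(t,i)]$ for the two intervals on the left. By (1) (recalling $n_i^H=n_i$) the smaller of the two left endpoints is $-n_i^H-\varepsilon_{G^H}(i,u)$, and by (2) the larger of the two right endpoints is $n_u^H+\varepsilon_{G^H}(u,i)$. It therefore suffices to show that $A\cup B$ is an interval, i.e. that $\max A\geq \min B-1$ and $\max B\geq \min A-1$. The first inequality is immediate since $\max A=n_s+w+\varepsilon_G(s,i)\geq 0\geq -n_i-\varepsilon_G(i,t)=\min B$. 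The second reads $n_t+\varepsilon_G(t,i)\geq -n_i+w-\varepsilon_G(i,s)-1$, which rearranges to $w\leq n_t+n_i+\varepsilon_G(t,i)+\varepsilon_G(i,s)+1$; this follows from the standing hypothesis $w\leq n_t+\varepsilon_G(t,s)\leq n_t+1$ because $n_i,\varepsilon_G(t,i),\varepsilon_G(i,s)\geq 0$. Hence $A\cup B=[\min\{\min A,\min B\},\,\max\{\max A,\max B\}]=[-n_i^H-\varepsilon_{G^H}(i,u),\,n_u^H+\varepsilon_{G^H}(u,i)]$.

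There is no serious obstacle here; the whole lemma is a bookkeeping verification. The one point that genuinely must not be overlooked is the ``no gap'' step in (3): without invoking the defining restriction $w\leq n_t+\varepsilon_G(t,s)$ on the label $w$ of $H$, the union $A\cup B$ need not be an interval at all, so this hypothesis is where the content of (3) really sits. Apart from that, the only care needed is to match the boundary cases ($w=0$ in (1), and the three sign cases of $n_s+w$ versus $n_t$ in (2)) with the corresponding piecewise clauses defining $E^H$ and $\bm{n}^H$ in Definition~\ref{def-contract-H}.
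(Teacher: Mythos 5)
Your proposal is correct and follows essentially the same route as the paper: the same case splits ($w>0$ versus $w=0$ for (1), the three comparisons of $n_s+w$ with $n_t$ for (2)), and the same ``no gap'' criterion for (3), with the only nontrivial inequality reduced to the hypothesis $w\leq n_t+\varepsilon_G(t,s)$ exactly as in the paper's argument. No gaps.
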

\begin{proof}
(1)\ We prove the assertion by a case study of (1-a) $w>0$ and (1-b) $w=0$.

(1-a) In the case $w>0$, since $n_i-w<n_i$ and
$\varepsilon_{G}(i,s),\varepsilon_{G}(i,t)\in\{0,1\}$, we have
\begin{align*}
n_i-w+\varepsilon_{G}(i,s)\leq n_i\leq n_i+\varepsilon_{G}(i,t).
\end{align*}
By the definitions, $n_i^H=n_i$ and
$\varepsilon_{G^H}(i,u)=\varepsilon_{G}(i,t)$. Therefore
\begin{align*}
\max\{n_i-w+\varepsilon_{G}(i,s),\ n_i+\varepsilon_{G}(i,t)\}
=n_i+\varepsilon_{G}(i,t)
=n_i^H+\varepsilon_{G^H}(i,u).
\end{align*}

(1-b) In the case $w=0$, by the definitions, we have $n_i^H=n_i=n_i-w$ and
$\varepsilon_{G^H}(i,u)=
\max\left\{\varepsilon_{G}(i,s),\varepsilon_{G}(i,t)\right\}$. Therefore
\begin{align*}
\max\{n_i-w+\varepsilon_{G}(i,s),\ n_i+\varepsilon_{G}(i,t)\}=
n_i^H+\max\left\{\varepsilon_{G}(i,s),\varepsilon_{G}(i,t)\right\}
=n_i^H+\varepsilon_{G^H}(i,u).
\end{align*}

(2) We prove the assertion by a case study of (2-a) $n_s+w>n_t$,
(2-b) $n_s+w=n_t$ and (2-c) $n_s+w<n_t$.

(2-a) In the case $n_s+w>n_t$, since
$\varepsilon_{G}(s,i),\varepsilon_{G}(t,i)\in\{0,1\}$, we have
\begin{align*}
n_t+\varepsilon_{G}(t,i)\leq n_s+w\leq n_s+w+\varepsilon_{G}(s,i).
\end{align*}
By the definitions, $n_u^H=n_s+w$ and
$\varepsilon_{G^H}(u,i)=\varepsilon_{G}(s,i)$. Therefore
\begin{align*}
\max\{n_s+w+\varepsilon_{G}(s,i),\ n_t+\varepsilon_{G}(t,i)\}
=n_s+w+\varepsilon_{G}(s,i)=n_u^H+\varepsilon_{G^H}(u,i).
\end{align*}

(2-b) In the case $n_s+w=n_t$, by the definitions, we have $n_u^H=n_s+w=n_t$ and $\varepsilon_{G^H}(u,i)=\max\left\{\varepsilon_{G}(s,i),\varepsilon_{G}(t,i)\right\}$. Therefore
\begin{align*}
\max\{n_s+w+\varepsilon_{G}(s,i),\ n_t+\varepsilon_{G}(t,i)\}
=n_u^H+\max\left\{\varepsilon_{G}(s,i),\varepsilon_{G}(t,i)\right\}
=n_u^H+\varepsilon_{G^H}(u,i).
\end{align*}

(2-c) In the case $n_s+w<n_t$, since $\varepsilon_{G}(s,i),\varepsilon_{G}(t,i)\in\{0,1\}$, we have
\begin{align*}
n_s+w+\varepsilon_{G}(s,i)\leq n_t\leq n_t+\varepsilon_{G}(t,i).
\end{align*}
By the definitions, $n_u^H=n_t$ and $\varepsilon_{G^H}(u,i)=\varepsilon_{G}(t,i)$. Therefore
\begin{align*}
\max\{n_s+w+\varepsilon_{G}(s,i),\ n_t+\varepsilon_{G}(t,i)\}
=n_t+\varepsilon_{G}(t,i)=n_u^H+\varepsilon_{G^H}(u,i).
\end{align*}

(3) In general, for $a,b,c,d\in\mathbb{Z}$ with $a\leq b$ and $c\leq d$, the equality $[a,b]\cup[c,d]=[\min\{a,c\},\max\{b,d\}]$ holds if and only if $d-a\geq -1$ and $b-c\geq -1$.
Hence
\begin{align}
&[-n_i+w-\varepsilon_{G}(i,s),n_s+w+\varepsilon_{G}(s,i)]\cup[-n_i-\varepsilon_{G}(i,t),n_t+\varepsilon_{G}(t,i)]\notag\\
=\,&\left[\min\{-n_i+w-\varepsilon_{G}(i,s),-n_i-\varepsilon_{G}(i,t)\},\max\{n_s+w+\varepsilon_{G}(s,i),n_t+\varepsilon_{G}(t,i)\}\right]\label{eq-interval-abcapcd}
\end{align}
holds if and only if $n_t+\varepsilon_{G}(t,i)-(-n_i+w-\varepsilon_{G}(i,s))\geq -1$ and $n_s+w+\varepsilon_{G}(s,i)-(-n_i-\varepsilon_{G}(i,t))\geq -1$.
Now $n_s+w+\varepsilon_{G}(s,i)-\left(-n_i-\varepsilon_{G}(i,t)\right)\geq -1$ is clear.
By the assumption $0\leq w\leq n_t+\varepsilon_G(t,s)$, we have
\begin{align*}
n_t+\varepsilon_{G}(t,i)-(-n_i+w-\varepsilon_{G}(i,s))
&\geq n_t+\varepsilon_{G}(t,i)+n_i-n_t-\varepsilon_{G}(t,s)+\varepsilon_{G}(i,s)\\
&=\varepsilon_{G}(t,i)+n_i-\varepsilon_{G}(t,s)+\varepsilon_{G}(i,s)\\
&\geq -1.
\end{align*}
Therefore the equality \eqref{eq-interval-abcapcd} holds, and the assertion follows from (1) and (2).
\end{proof}
\begin{Them}\label{thm-restAnGtoH}
Let $G$ be a digraph and let $\bm{n}\in\mathbb{Z}_{\geq 0}^{V}$.
Let $H\in\mathscr{A}(\bm{n},G)$, and we write $H=\{x_s-x_t=w\}$ for some $0\leq w\leq n_t+\varepsilon_G(t,s)$.
Then $\mathscr{A}(\bm{n},G)^H$ and $\mathscr{A}(\bm{n}^{H},G^{H})$ are affinely equivalent.
In particular, the class of arrangements $\mathscr{A}(\bm{n},G)$ is closed under restriction.
\end{Them}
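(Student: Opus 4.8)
The plan is to realise $\mathscr{A}(\bm{n}^H,G^H)$ inside the hyperplane $H$ via an explicit coordinate change and then match the two arrangements hyperplane by hyperplane. Write $H=\{x_s-x_t=w\}$ with $0\le w\le n_t+\varepsilon_G(t,s)$. First I would introduce the affine isomorphism $\phi\colon H\to\mathbb{K}^{V^H}$ that deletes the coordinate $x_s$ and renames $x_t$ as $x_u$: explicitly $\phi(\bm{x})_i=x_i$ for $i\in V\setminus\{s,t\}$ and $\phi(\bm{x})_u=x_t$, with inverse sending $\bm{y}$ to the point given by $x_i=y_i$ for $i\ne s,t$, $x_t=y_u$ and $x_s=y_u+w$. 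The theorem then reduces to the equality $\phi\bigl(\mathscr{A}(\bm{n},G)^H\bigr)=\mathscr{A}(\bm{n}^H,G^H)$. Both sides are arrangements in $\mathbb{K}^{V^H}$ all of whose hyperplanes have the form $\{y_a-y_b=c\}$ with $a\ne b$ in $V^H$ and $c\in\mathbb{Z}$, so by \eqref{eq-defAnG} it is enough to verify, for each ordered pair $(a,b)$ of distinct vertices of $V^H$, that the set of integers $c$ with $\{y_a-y_b=c\}$ in $\phi\bigl(\mathscr{A}(\bm{n},G)^H\bigr)$ equals the corresponding set for $\mathscr{A}(\bm{n}^H,G^H)$.

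Next I would run a case analysis on a hyperplane $K=\{x_i-x_j=c\}\in\mathscr{A}(\bm{n},G)$ according to the value of $|\{i,j\}\cap\{s,t\}|$. If $|\{i,j\}\cap\{s,t\}|=2$, then $K$ is either $H$ itself or a hyperplane parallel to and disjoint from $H$, so $K$ does not contribute to $\mathscr{A}(\bm{n},G)^H$. If $|\{i,j\}\cap\{s,t\}|=0$, then $\phi(K\cap H)=\{y_i-y_j=c\}$; since $n_i^H=n_i$, $n_j^H=n_j$ and $\varepsilon_{G^H}$ agrees with $\varepsilon_G$ on $V^H\setminus\{u\}$, the set of admissible constants for the pair $\{i,j\}$ is $[-n_i-\varepsilon_G(i,j),\,n_j+\varepsilon_G(j,i)]$ on both sides, and moreover no hyperplane of $\mathscr{A}(\bm{n},G)$ with vertex pair other than $\{i,j\}$ restricts to a hyperplane of this form. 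If $|\{i,j\}\cap\{s,t\}|=1$, say $i\in V^H\setminus\{u\}$ and the remaining index lies in $\{s,t\}$, then using the relation $x_s=x_t+w$ on $H$ one computes $\phi(\{x_i-x_s=c\}\cap H)=\{y_i-y_u=c+w\}$ and $\phi(\{x_i-x_t=c\}\cap H)=\{y_i-y_u=c\}$ (writing $x_s-x_i$ or $x_t-x_i$ instead only negates the constant). Hence the set of integers $d$ with $\{y_i-y_u=d\}\in\phi\bigl(\mathscr{A}(\bm{n},G)^H\bigr)$ is
\[
[-n_i+w-\varepsilon_G(i,s),\,n_s+w+\varepsilon_G(s,i)]\cup[-n_i-\varepsilon_G(i,t),\,n_t+\varepsilon_G(t,i)],
\]
and Lemma~\ref{lem-max-ninu}(3) identifies this union with $[-n_i^H-\varepsilon_{G^H}(i,u),\,n_u^H+\varepsilon_{G^H}(u,i)]$, which by \eqref{eq-defAnG} is exactly the set of admissible constants for the pair $\{i,u\}$ in $\mathscr{A}(\bm{n}^H,G^H)$.

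Assembling the three cases gives $\phi\bigl(\mathscr{A}(\bm{n},G)^H\bigr)=\mathscr{A}(\bm{n}^H,G^H)$, so the two arrangements are affinely equivalent; since $\mathscr{A}(\bm{n}^H,G^H)$ is again of the prescribed shape, this yields the final assertion that the class is closed under restriction. I expect the only genuine mathematical input to be Lemma~\ref{lem-max-ninu}(3) (parts (1) and (2) are used only inside its proof), the rest of the argument being bookkeeping: tracking which hyperplanes of $\mathscr{A}(\bm{n},G)$ become which hyperplanes of the restriction. The one point to keep in mind is that a hyperplane through $x_s$ and a hyperplane through $x_t$ may restrict to the same hyperplane of $\mathscr{A}(\bm{n},G)^H$; this is harmless because we are comparing the sets of admissible constants rather than counting hyperplanes. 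The degenerate case $\ell=2$, where $V^H$ is a singleton, is trivial: every hyperplane of $\mathscr{A}(\bm{n},G)$ is then parallel to $H$ and both sides of the claimed equality are empty.
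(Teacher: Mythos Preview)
Your proposal is correct and follows essentially the same route as the paper: the same affine isomorphism $\phi$ (forgetting $x_s$ and renaming $x_t$ as $x_u$), the same case split according to how many of $s,t$ occur in a hyperplane, and Lemma~\ref{lem-max-ninu}(3) as the sole nontrivial input. Your write-up is in fact a bit more explicit about why distinct hyperplanes of $\mathscr{A}(\bm{n},G)$ may restrict to the same hyperplane of $\mathscr{A}(\bm{n},G)^H$ and about the degenerate case $\ell=2$, but mathematically the two arguments coincide.
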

\begin{proof}
We can describe $\mathscr{A}(\bm{n},G)^{H}$ as
\begin{align*}
\mathscr{A}(\bm{n},G)^{H}
=&\left\{\{x_i-x_j=c\}\cap H\,\middle|\,i,j\in V\setminus\{s,t\},i\neq j,-n_i-\varepsilon_G(i,j)\leq c\leq n_j+\varepsilon_G(j,i)\right\}\\
&\cup\left\{\{x_i-x_s=c\}\cap H\,\middle|\,i\in V\setminus\{s,t\},-n_i-\varepsilon_G(i,s)\leq c\leq n_s+\varepsilon_G(s,i)\right\}\\
&\cup\left\{\{x_i-x_t=c\}\cap H\,\middle|\,i\in V\setminus\{s,t\},-n_i-\varepsilon_G(i,t)\leq c\leq n_t+\varepsilon_G(t,i)\right\}.
\end{align*}
Since $x_t=x_s-w$ for $\bm{x}=(x_i)_{i\in V}\in H$, we have
\begin{align*}
&\left\{\{x_i-x_s=c\}\cap H\,\middle|\,i\in V\setminus\{s,t\},-n_i-\varepsilon_G(i,s)\leq c\leq n_s+\varepsilon_G(s,i)\right\}\\
=&\left\{\{x_i-x_t=c+w\}\cap H\,\middle|\,i\in V\setminus\{s,t\},-n_i+w-\varepsilon_G(i,s)\leq c+w\leq n_s+w+\varepsilon_G(s,i)\right\}.
\end{align*}
We note that if $i,j\in V\setminus\{s,t\}$ and $i\neq j$, then $n_i=n_i^H$, $n_j=n^H_j$, $\varepsilon_G(i,j)=\varepsilon_{G^H}(i,j)$, and $\varepsilon_G(j,i)=\varepsilon_{G^H}(j,i)$.
Therefore by Lemma \ref{lem-max-ninu} (3), we have
\begin{align*}
\mathscr{A}(\bm{n},G)^{H}
=&\left\{\{x_i-x_j=c\}\cap H\,\middle|\,i,j\in V\setminus\{s,t\},i\neq j,-n^H_i-\varepsilon_{G^H}(i,j)\leq c\leq n^H_j+\varepsilon_{G^H}(j,i)\right\}\\
&\cup\left\{\{x_i-x_t=c\}\cap H\,\middle|\,i\in V\setminus\{s,t\},-n_i^H-\varepsilon_{G^H}(i,u)\leq c\leq n_u^H+\varepsilon_{G^H}(u,i)\right\}.
\end{align*}
Now let $\phi:H\rightarrow \mathbb{K}^{V^H}$ be the map defined by $\phi(\bm{x})=\bm{y}$, where $y_i:=
\begin{cases}
x_i&\left(i\in V^H\setminus\{u\}\right),\\
x_t&\left(i=u\right).
\end{cases}
$
Then $\phi$ is an affine isomorphism, while
\begin{align*}
&\phi\left(\mathscr{A}(\bm{n},G)^{H}\right)\\
=&\left\{\{\bm{y}\in\mathbb{K}^{V^H}\mid y_i-y_j=c\}\,\middle|\,i,j\in V^H\setminus\{u\},i\neq j,-n^H_i-\varepsilon_{G^H}(i,j)\leq c\leq n^H_j+\varepsilon_{G^H}(j,i)\right\}\\
&\cup\left\{\{\bm{y}\in\mathbb{K}^{V^H}\mid y_i-y_u=c\}\,\middle|\,i\in V^H\setminus\{u\},-n_i^H-\varepsilon_{G^H}(i,u)\leq c\leq n_u^H+\varepsilon_{G^H}(u,i)\right\}\\
=&\mathscr{A}(\bm{n}^H,G^H).
\end{align*}
Hence we conclude that $\mathscr{A}(\bm{n},G)^H$ and $\mathscr{A}(\bm{n}^{H},G^{H})$ are affinely equivalent.
\end{proof}

We prove that if $E$ is the empty set, then $c\mathscr{A}(\bm{n},G)$ is free.
We recall some results which are used in the proof of the assertion.
A derivation $\theta=\sum_{i\in V}f_i\partial_i\in\sum_{i\in V}S\partial_i$ is said to be homogeneous of degree $j$ and write $\deg(\theta)=j$, if $f_i$ is zero or homogeneous of degree $j$ for each $i\in V$.
If a central arrangement $\mathscr{A}$ is free, then there exists a homogeneous basis $\{\theta_1,\dots,\theta_{\ell}\}$ for $D(\mathscr{A})$.
The multi-set $\exp(\mathscr{A})$ of exponents of a free arrangement $\mathscr{A}$ is defined by $\exp(\mathscr{A}):=\left\{\deg(\theta_1),\dots,\deg(\theta_{\ell})\right\}$.
For central arrangements $\mathscr{A}_1$ and $\mathscr{A}_2$ in $\mathbb{K}^{V_1}$ and $\mathbb{K}^{V_2}$, we define the product $\mathscr{A}_1\times\mathscr{A}_2$ by $\mathscr{A}_1\times\mathscr{A}_2:=\{H_1\oplus\mathbb{K}^{V_2}\mid H_1\in\mathscr{A}_1\}\cup\{\mathbb{K}^{V_1}\oplus H_2\mid H_2\in\mathscr{A}_2\}$.
For a finite set $X$, we denote by $\# X$ and $|X|$ the number of elements in $X$.
\begin{Prop}[cf. Proposition 4.28, Example 4.20, and Example 4.32 in \cite{Orlik-Terao}]
\ 
\begin{itemize}
\item[(1)] The product $\mathscr{A}_1\times\mathscr{A}_2$ of central arrangements $\mathscr{A}_1$ and $\mathscr{A}_2$ is free if and only if both $\mathscr{A}_1$ and $\mathscr{A}_2$ are free, while in this case $\exp(\mathscr{A}_1\times\mathscr{A}_2)=\exp(\mathscr{A}_1)\cup\exp(\mathscr{A}_2)$.
\item[(2)] All central $2$-arrangements $\mathscr{A}$ are free with $\exp(\mathscr{A})=\{1,|\mathscr{A}|-1\}$.
\item[(3)] The braid arrangement $\mathcal{A}_{\ell-1}$ is free with $\exp(\mathcal{A}_{\ell-1})=\{0,1,2,\dots,\ell-1\}$.
\end{itemize}
\end{Prop}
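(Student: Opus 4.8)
The three statements are essentially independent, so the plan is to treat them in turn.

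\emph{Part (3).} I would exhibit an explicit basis. For $k=0,1,\dots,\ell-1$ put $\theta_k:=\sum_{i\in V}x_i^k\partial_i$. Since $\theta_k(x_i-x_j)=x_i^k-x_j^k=(x_i-x_j)\sum_{a=0}^{k-1}x_i^ax_j^{k-1-a}$, each $\theta_k$ lies in $D(\mathcal{A}_{\ell-1})$ and is homogeneous of degree $k$. The coefficient matrix $\bigl(x_i^k\bigr)_{0\le k\le \ell-1,\ i\in V}$ has determinant the Vandermonde product $\prod_{i<j}(x_j-x_i)$, which is a nonzero scalar multiple of the defining polynomial $\prod_{i<j}(x_i-x_j)$ of $\mathcal{A}_{\ell-1}$. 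By Saito's criterion $\{\theta_0,\dots,\theta_{\ell-1}\}$ is therefore an $S$-basis of $D(\mathcal{A}_{\ell-1})$, so $\mathcal{A}_{\ell-1}$ is free with $\exp(\mathcal{A}_{\ell-1})=\{0,1,\dots,\ell-1\}$. This step is routine.

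\emph{Part (2).} Let $\mathscr{A}$ be a central arrangement of $n$ lines through the origin in $\mathbb{K}^2$. I would induct on $n$ via Terao's addition theorem. The base case $n=1$ is a single line, which is free with $\exp=\{0,1\}=\{1,0\}$. For $n\ge 2$, pick $H\in\mathscr{A}$: the deletion $\mathscr{A}':=\mathscr{A}\setminus\{H\}$ is free with $\exp(\mathscr{A}')=\{1,n-2\}$ by induction, and the restriction $\mathscr{A}^H$ is a single point (the origin) in the line $H\cong\mathbb{K}^1$, hence free with $\exp(\mathscr{A}^H)=\{1\}$. As $1$ is a common exponent, the addition theorem yields that $\mathscr{A}$ is free with $\exp(\mathscr{A})=\{1,(n-2)+1\}=\{1,n-1\}$. (Alternatively, $D(\mathscr{A})$ is a graded reflexive module of rank $2$ over the regular ring $\mathbb{K}[x,y]$, hence free, and the exponents follow from Terao's factorization applied to $\chi(\mathscr{A},t)=t^2-nt+(n-1)=(t-1)\bigl(t-(n-1)\bigr)$.)

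\emph{Part (1).} Write $S_1,S_2,S$ for the coordinate rings of $\mathbb{K}^{V_1}$, $\mathbb{K}^{V_2}$, and $\mathbb{K}^{V_1}\oplus\mathbb{K}^{V_2}$. The structural point is that every defining form of a hyperplane in $\mathscr{A}_1\times\mathscr{A}_2$ uses only the $V_1$-variables or only the $V_2$-variables, so the defining conditions for $\mathscr{A}_1$ and those for $\mathscr{A}_2$ decouple. Splitting a derivation according to which block its partials belong to and expanding its coefficients over a basis of $S$ as an $S_1$- (resp.\ $S_2$-) module, one obtains the decomposition of graded $S$-modules
\[
D(\mathscr{A}_1\times\mathscr{A}_2)=\bigl(D(\mathscr{A}_1)\otimes_{S_1}S\bigr)\oplus\bigl(D(\mathscr{A}_2)\otimes_{S_2}S\bigr).
\]
Because $S_k\to S$ is free and homogeneous, extension of scalars sends a free $S_k$-module with a homogeneous basis to a free $S$-module with a homogeneous basis in the same degrees; hence if $\mathscr{A}_1$ and $\mathscr{A}_2$ are free, so is $\mathscr{A}_1\times\mathscr{A}_2$, with $\exp(\mathscr{A}_1\times\mathscr{A}_2)=\exp(\mathscr{A}_1)\cup\exp(\mathscr{A}_2)$. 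The main obstacle is the converse: if $\mathscr{A}_1\times\mathscr{A}_2$ is free then $D(\mathscr{A}_k)\otimes_{S_k}S$ is a graded direct summand of a free $S$-module, hence graded projective and so graded free over $S$, but one must descend this along $S_k\hookrightarrow S$ to get freeness of $D(\mathscr{A}_k)$ over $S_k$. I would do this by specializing the variables of the other block to generic scalars, which recovers $D(\mathscr{A}_k)$ from $D(\mathscr{A}_k)\otimes_{S_k}S$ compatibly with the grading and with a homogeneous minimal generating set; the delicate point is arranging the genericity so that the number and degrees of minimal generators are preserved. Since the statement is quoted verbatim from Orlik--Terao, one may instead simply cite Proposition~4.28, Example~4.20, and Example~4.32 there.
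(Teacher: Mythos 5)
Your proposal is essentially correct, but note that the paper does not prove this proposition at all: it is quoted with a ``cf.'' citation to Proposition~4.28, Example~4.20, and Example~4.32 of Orlik--Terao, and those references are the paper's entire justification. You instead supply self-contained arguments, all of which are the standard textbook ones and are sound: the Vandermonde/Saito's-criterion basis $\theta_k=\sum_i x_i^k\partial_i$ for part~(3); either the addition--deletion induction or the reflexivity-of-rank-$2$ argument for part~(2) (your restriction step is right, since all lines of a central $2$-arrangement meet $H$ in the origin, so $\exp(\mathscr{A}^H)=\{1\}$); and the block decomposition $D(\mathscr{A}_1\times\mathscr{A}_2)=\bigl(D(\mathscr{A}_1)\otimes_{S_1}S\bigr)\oplus\bigl(D(\mathscr{A}_2)\otimes_{S_2}S\bigr)$ for part~(1), which is exactly Orlik--Terao's Proposition~4.28. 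The only place where your wording is slightly off is the descent step in the converse of~(1): specializing the $V_2$-variables to \emph{generic} scalars would destroy homogeneity, and no genericity is needed --- simply set them to $0$, so that $\bigl(D(\mathscr{A}_1)\otimes_{S_1}S\bigr)\otimes_S S/(x_j:j\in V_2)\cong D(\mathscr{A}_1)$ as graded $S_1$-modules and freeness (with the same degrees) descends; alternatively, $S$ is faithfully flat over $S_1$, so $D(\mathscr{A}_1)$ is finitely generated flat, hence projective, hence graded free. With that adjustment your argument is complete and strictly more informative than the paper's citation.
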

\begin{Them}[The addition-deletion theorem, Terao \cite{Terao-add-del}]\label{thm-add-del-terao}
Let $\mathscr{A}$ be a nonempty central $\ell$-arrangement and let $H\in\mathscr{A}$. Then any two of the following statements imply the third:
\begin{itemize}
\item $\mathscr{A}$ is free with $\exp(\mathscr{A})=\{e_1,\dots,e_{\ell-1},e_{\ell}\}$,
\item $\mathscr{A}\setminus\{H\}$ is free with $\exp(\mathscr{A}\setminus\{H\})=\{e_1,\dots,e_{\ell-1},e_{\ell}-1\}$,
\item $\mathscr{A}^H$ is free with $\exp(\mathscr{A}^H)=\{e_1,\dots,e_{\ell-1}\}$.
\end{itemize}
\end{Them}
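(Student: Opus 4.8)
The plan is to prove all three implications at once through Terao's fundamental exact sequence attached to the triple $(\mathscr{A},\mathscr{A}',\mathscr{A}'')$, where $\mathscr{A}':=\mathscr{A}\setminus\{H\}$ is the deletion and $\mathscr{A}'':=\mathscr{A}^{H}$ is the restriction. Since $\mathscr{A}$ is central I may choose coordinates so that the defining form of $H$ is $\alpha=x_{\ell}$, so that $S'':=S/\alpha S$ is the coordinate ring of $H$ and $D(\mathscr{A}'')$ is a graded $S''$-module. First I would construct the restriction map $\rho\colon D(\mathscr{A})\to D(\mathscr{A}'')$ sending $\theta$ to the derivation $\bar{\theta}$ it induces on $S''$, defined by $\bar{\theta}(\bar{f})=\overline{\theta(f)}$. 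The well-definedness uses precisely that $\theta\in D(\mathscr{A})$ forces $\theta(\alpha)\in\alpha S$, so that $\theta$ preserves the ideal $(\alpha)$ and descends to $S''$; evaluating $\bar\theta$ against the defining form of each hyperplane $K\cap H$ of $\mathscr{A}''$ then shows $\bar{\theta}\in D(\mathscr{A}'')$. Note that $\rho$ is genuinely defined only on $D(\mathscr{A})$, not on $D(\mathscr{A}')$, because the descent requires the constraint coming from $H$ itself.

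The next step is to identify the kernel. If $\bar{\theta}=0$ then $\theta(x_{i})\in\alpha S$ for every $i$, so $\theta=\alpha\eta$ for some $\eta\in\sum_{i}S\partial_{i}$; and using that $\alpha$ is coprime to each $\alpha_{K}$ with $K\neq H$, one checks that $\alpha\eta\in D(\mathscr{A})$ holds if and only if $\eta\in D(\mathscr{A}')$. Hence $\ker\rho=\alpha D(\mathscr{A}')$, and since multiplication by $\alpha$ is injective and raises degree by one, I obtain the left-exact sequence
\[
0\longrightarrow D(\mathscr{A}')(-1)\xrightarrow{\ \alpha\ }D(\mathscr{A})\xrightarrow{\ \rho\ }D(\mathscr{A}'').
\]
This holds with no freeness assumption, and it already forces the stated exponent bookkeeping: a free module with exponents $\{a_{1},\dots,a_{k}\}$ over a polynomial ring in $r$ variables has Poincar\'e series $(\sum_{j}t^{a_{j}})/(1-t)^{r}$, and writing $\{e_{i}\}=\exp(\mathscr{A})$, $\{d_{i}\}=\exp(\mathscr{A}')$, $\{f_{i}\}=\exp(\mathscr{A}'')$, a short exact completion of the sequence above yields the identity $\sum_{i}t^{e_{i}}=t\sum_{i}t^{d_{i}}+(1-t)\sum_{i}t^{f_{i}}$, which the patterns $\{e_{1},\dots,e_{\ell}\}$, $\{e_{1},\dots,e_{\ell-1},e_{\ell}-1\}$, $\{e_{1},\dots,e_{\ell-1}\}$ satisfy exactly.

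The hard part, and the step I expect to be the main obstacle, is the surjectivity of $\rho$: the left-exact sequence must be promoted to a short exact one. This is not automatic, and its failure in general is exactly the phenomenon behind the failure of Orlik's conjecture recorded in the Edelman--Reiner example. I would establish surjectivity from the two freeness hypotheses in each case, by lifting a homogeneous basis of $D(\mathscr{A}'')$ to $D(\mathscr{A})$: given $\bar\psi\in D(\mathscr{A}'')$ I take a naive lift, whose failure to lie in $D(\mathscr{A})$ is measured against each $\alpha_{K}$, and then correct it by elements of $\alpha D(\mathscr{A}')$, using the freeness of $D(\mathscr{A}')$ to solve the resulting linear obstruction. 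Equivalently, setting $C:=\operatorname{coker}\rho$ and taking Poincar\'e series in $0\to D(\mathscr{A}')(-1)\to D(\mathscr{A})\to D(\mathscr{A}'')\to C\to 0$, the contributions of the two modules assumed free cancel to leave precisely $(\sum_{i=1}^{\ell-1}t^{e_{i}})/(1-t)^{\ell-1}$, the series of a free $S''$-module of the expected exponents; a projective-dimension comparison then forces $C=0$.

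Finally I would assemble the three implications uniformly. Once $\rho$ is surjective the sequence is a short exact sequence $0\to A\to B\to C\to 0$ of $S$-modules with $A=D(\mathscr{A}')(-1)$, $B=D(\mathscr{A})$, $C=D(\mathscr{A}'')$, and freeness of $\mathscr{A}$, $\mathscr{A}'$, $\mathscr{A}''$ corresponds respectively to $\operatorname{pd}_{S}B=0$, $\operatorname{pd}_{S}A=0$, and $\operatorname{pd}_{S}C=1$, the last equivalence coming from the change-of-rings formula $\operatorname{pd}_{S}C=\operatorname{pd}_{S''}C+1$ valid since $\alpha$ is a nonzerodivisor. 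The standard inequalities $\operatorname{pd}(B)\le\max(\operatorname{pd}A,\operatorname{pd}C)$, $\operatorname{pd}(A)\le\max(\operatorname{pd}B,\operatorname{pd}C-1)$, $\operatorname{pd}(C)\le\max(\operatorname{pd}B,\operatorname{pd}A+1)$, together with the fact that $C=D(\mathscr{A}'')$ is nonzero $S$-torsion and hence never $S$-free, then deliver freeness of the remaining module from any two of the hypotheses; Saito's criterion and the Poincar\'e-series identity pin down the exponents in the required form, and as a numerical check one verifies the degree count $\sum_{i}\deg=\#(\cdot)$ from Saito's criterion for each of the three arrangements, which the exponent patterns respect. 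All of the genuine content sits in the surjectivity of $\rho$; everything after it is homological bookkeeping. Here I use Saito's criterion and the projective-dimension facts as recorded in \cite{Orlik-Terao}.
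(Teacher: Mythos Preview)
The paper does not prove this theorem; it is quoted from Terao \cite{Terao-add-del} (cf.\ \cite[Theorem 4.51]{Orlik-Terao}) and used as a black box. So there is no paper proof to compare against, and your outline is essentially the classical route: the left-exact sequence $0\to D(\mathscr{A}')(-1)\to D(\mathscr{A})\xrightarrow{\rho} D(\mathscr{A}'')$, then surjectivity, then homological bookkeeping.

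Where your sketch is genuinely incomplete is the surjectivity step, and this is not a detail. Your first proposed mechanism (``correct by elements of $\alpha D(\mathscr{A}')$, using the freeness of $D(\mathscr{A}')$'') explicitly assumes $\mathscr{A}'$ free, so it is unavailable in the implication $(1)\wedge(3)\Rightarrow(2)$, where freeness of $\mathscr{A}'$ is the conclusion. Your alternative (``setting $C=\operatorname{coker}\rho$ and taking Poincar\'e series \dots\ a projective-dimension comparison then forces $C=0$'') does not work as stated either: in the four-term sequence $0\to D(\mathscr{A}')(-1)\to D(\mathscr{A})\to D(\mathscr{A}'')\to C\to 0$ only two of the three derivation modules are assumed free, so the single Hilbert-series identity has two unknown series (the third derivation module and $C$) and cannot by itself force $C=0$; nor do the projective-dimension inequalities you quote apply before the sequence is known to be short exact. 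This is exactly the obstruction you yourself flag via Edelman--Reiner.

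In the standard proof the three implications are \emph{not} obtained by first proving surjectivity uniformly and then invoking projective-dimension inequalities. One works case by case with explicit bases and Saito's criterion: e.g.\ for $(1)\wedge(3)\Rightarrow(2)$ one takes a homogeneous basis $\theta_1,\dots,\theta_\ell$ of $D(\mathscr{A})$, uses the exact exponent match with $\exp(\mathscr{A}'')=\{e_1,\dots,e_{\ell-1}\}$ to arrange that $\bar\theta_1,\dots,\bar\theta_{\ell-1}$ is a basis of $D(\mathscr{A}'')$ and $\bar\theta_\ell=0$, writes $\theta_\ell=\alpha_H\psi$, and then checks via Saito's criterion that $\theta_1,\dots,\theta_{\ell-1},\psi$ is a basis of $D(\mathscr{A}')$. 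Surjectivity of $\rho$ emerges here as a \emph{byproduct} of the basis analysis, not as a hypothesis fed into a homological machine. Your outline would be correct if you replaced the uniform surjectivity claim with this case-by-case Saito-criterion argument.
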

In this paper, we frequently use Theorem \ref{thm-add-del-terao}.
The class $\mathcal{IF}$ of inductively free arrangements is the smallest class of central arrangements which satisfies the following two conditions:
\begin{itemize}
\item[(i)] The empty arrangements are contained in $\mathcal{IF}$.
\item[(i\hspace{-0.5mm}i)] If there exists $H\in\mathscr{A}$ such that $\mathscr{A}\setminus\{H\}\in\mathcal{IF}$, $\mathscr{A}^H\in\mathcal{IF}$, and $\exp(\mathscr{A}^H)\subseteq\exp(\mathscr{A}\setminus\{H\})$, then $\mathscr{A}\in\mathcal{IF}$.
\end{itemize}
We note that every inductively free arrangement is free by Theorem \ref{thm-add-del-terao}.
Clearly all central $2$-arrangements are inductively free.
In addition, it is widely known that the braid arrangement $\mathcal{A}_{\ell-1}$ is inductively free.

\begin{Defi}
For a tuple $\bm{n}\in \mathbb{Z}_{\geq 0}^{V}$, we define $|\bm{n}|:=\sum_{i\in V}n_i$.
\end{Defi}
\begin{Them}\label{thm-free-A(n,G)-G-empty}
If $|V|=\ell$ and $E=\emptyset$, then $c\mathscr{A}(\bm{n},G)$ is inductively free with exponents $\{0,1,|\bm{n}|+1,|\bm{n}|+2,\dots,|\bm{n}|+\ell-1\}$.
\end{Them}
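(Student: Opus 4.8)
The plan is to establish the stronger statement that $c\mathscr{A}(\bm{n},\emptyset)$ is \emph{inductively} free with exponents $\{0,1,|\bm{n}|+1,\dots,|\bm{n}|+\ell-1\}$; both the freeness and the ``in particular'' clause then follow immediately. I would induct on $\ell+|\bm{n}|$. There are two families of base cases. If $\ell=2$, every hyperplane of $\mathscr{A}(\bm{n},\emptyset)$ has the form $\{x_1-x_2=c\}$, so all hyperplanes of $c\mathscr{A}(\bm{n},\emptyset)$ pass through a common line; hence, after a linear change of coordinates, $c\mathscr{A}(\bm{n},\emptyset)$ is the product of a central $2$-arrangement with an empty $1$-arrangement, so it is free with exponents $\{1,|\bm{n}|+1\}\cup\{0\}$ and inductively free as a product of inductively free arrangements. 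If $\ell\ge 3$ and $|\bm{n}|=0$, then $\mathscr{A}(\bm{0},\emptyset)=\mathcal{A}_{\ell-1}$, so $c\mathscr{A}(\bm{0},\emptyset)=\mathcal{A}_{\ell-1}\times\{\{z=0\}\}$ is inductively free with exponents $\{0,1,\dots,\ell-1\}\cup\{1\}$, which is the asserted multiset when $|\bm{n}|=0$.

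For the inductive step, suppose $\ell\ge 3$ and $|\bm{n}|\ge 1$ and choose $k\in V$ with $n_k\ge 1$. The key elementary observation, immediate from \eqref{eq-defAnG}, is that inserting all out-edges at $k$ exactly compensates for lowering $n_k$ by one:
\[
\mathscr{A}(\bm{n},\emptyset)=\mathscr{A}\bigl(\bm{n}-\bm{e}_k,\ (V,\{(k,j)\mid j\in V\setminus\{k\}\})\bigr),
\]
where $\bm{e}_k$ is the tuple with a $1$ in coordinate $k$ and $0$ elsewhere. Order $V\setminus\{k\}=\{j_1,\dots,j_{\ell-1}\}$, put $G^{(r)}:=(V,\{(k,j_1),\dots,(k,j_r)\})$ and $\mathscr{A}_r:=c\mathscr{A}(\bm{n}-\bm{e}_k,G^{(r)})$. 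Then $\mathscr{A}_0=c\mathscr{A}(\bm{n}-\bm{e}_k,\emptyset)$, $\mathscr{A}_{\ell-1}=c\mathscr{A}(\bm{n},\emptyset)$, and by Remark (2) the arrangement $\mathscr{A}_r$ is obtained from $\mathscr{A}_{r-1}$ by adjoining the single hyperplane $cH_r$, where $H_r=\{x_k-x_{j_r}=-n_k\}$ is the hyperplane contributed by the edge $(k,j_r)$. By the inductive hypothesis $\mathscr{A}_0$ is inductively free with $\exp(\mathscr{A}_0)=\{0,1,|\bm{n}|,|\bm{n}|+1,\dots,|\bm{n}|+\ell-2\}$.

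The heart of the argument is identifying the restriction $\mathscr{A}_r^{cH_r}$. Since restriction to a coned finite hyperplane commutes with coning, $\mathscr{A}_r^{cH_r}$ is linearly isomorphic to $c\bigl(\mathscr{A}(\bm{n}-\bm{e}_k,G^{(r)})^{H_r}\bigr)$, which by Theorem \ref{thm-restAnGtoH} equals $c\,\mathscr{A}\bigl((\bm{n}-\bm{e}_k)^{H_r},(G^{(r)})^{H_r}\bigr)$ up to affine equivalence. Writing $H_r=\{x_{j_r}-x_k=n_k\}$ places us in Definition \ref{def-contract-H} with $s=j_r$, $t=k$, $w=n_k\ge 1$; inspecting that definition shows that $(G^{(r)})^{H_r}$ is edgeless — every edge of $G^{(r)}$ is incident to $k$, so no old edge survives, while $w>0$ and $n_{j_r}+w>n_k-1$ (because $n_{j_r}\ge 0$) rule out every edge at the new vertex $u$ — and that $n^{H_r}_u=n_{j_r}+n_k$, so $(\bm{n}-\bm{e}_k)^{H_r}$ has total $|\bm{n}|$ and lives on $\ell-1$ vertices. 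Hence, by the inductive hypothesis applied on $\ell-1$ vertices, $\mathscr{A}_r^{cH_r}$ is inductively free with $\exp(\mathscr{A}_r^{cH_r})=\{0,1,|\bm{n}|+1,\dots,|\bm{n}|+\ell-2\}$, independently of $r$.

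Finally I would run an inner induction on $r$ using the addition--deletion theorem (Theorem \ref{thm-add-del-terao}). Assuming $\mathscr{A}_{r-1}$ is free with the ``staircase'' exponents $\{0,1\}\cup\{|\bm{n}|+1,\dots,|\bm{n}|+r-1\}\cup\{|\bm{n}|+r-1,\dots,|\bm{n}|+\ell-2\}$, a direct comparison of multisets gives $\exp(\mathscr{A}_r^{cH_r})=\exp(\mathscr{A}_{r-1})\setminus\{|\bm{n}|+r-1\}$; Theorem \ref{thm-add-del-terao} then yields that $\mathscr{A}_r$ is free with $\exp(\mathscr{A}_r)=\bigl(\exp(\mathscr{A}_{r-1})\setminus\{|\bm{n}|+r-1\}\bigr)\cup\{|\bm{n}|+r\}$, and the inclusion $\exp(\mathscr{A}_r^{cH_r})\subseteq\exp(\mathscr{A}_{r-1})$ together with inductive freeness of $\mathscr{A}_{r-1}$ and of $\mathscr{A}_r^{cH_r}$ shows $\mathscr{A}_r$ is inductively free. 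Taking $r=\ell-1$ gives that $c\mathscr{A}(\bm{n},\emptyset)$ is inductively free with $\exp=\{0,1,|\bm{n}|+1,\dots,|\bm{n}|+\ell-1\}$, completing the induction. I expect the most delicate point to be the contraction computation of the third paragraph — verifying, case by case in Definition \ref{def-contract-H} and using $n_k\ge 1$, both that $(G^{(r)})^{H_r}$ has no edges and that $n^{H_r}_u=n_{j_r}+n_k$ — together with the routine-but-worth-stating fact that restriction commutes with coning along a finite hyperplane; the exponent bookkeeping is then mechanical.
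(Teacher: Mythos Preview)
Your proposal is correct and follows essentially the same strategy as the paper's proof: reduce $n_k$ by one while inserting all out-edges at $k$, then add those edges one at a time and apply the addition--deletion theorem, using Theorem~\ref{thm-restAnGtoH} to identify each restriction as an edgeless $c\mathscr{A}(\bm{n}',\emptyset)$ on $\ell-1$ vertices with $|\bm{n}'|=|\bm{n}|$. The only differences are cosmetic---the paper organizes the same argument as a double induction on $\ell$ and $|\bm{n}|$ and writes the intermediate exponent multiset as $\{0,1,|\bm{n}|+1,\dots,|\bm{n}|+\ell-2,|\bm{n}|+r\}$ rather than your ``staircase'' form.
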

\begin{proof}
We can regard $V$ as $\{1,\dots,\ell\}$.
We prove the assertion by double induction on $\ell$ and $|\bm{n}|$.
If $\ell=2$ and $|\bm{n}|\geq 0$, then $c\mathscr{A}(\bm{n},G)$ is a product of the empty arrangement and a central $2$-arrangement with $n_1+n_2+1$ hyperplanes. Therefore $c\mathscr{A}(\bm{n},G)$ is inductively free with exponents $\{0,1,|\bm{n}|+1\}$.
If $|\bm{n}|=0$ and $\ell\geq 2$, then $\bm{n}=\bm{0}$.
Since $c\mathscr{A}(\bm{n},G)=\mathcal{A}_{\ell-1}\times\{\{z=0\}\}$, the arrangement $c\mathscr{A}(\bm{n},G)$ is inductively free with exponents $\{0,1,1,2,\dots,\ell-1\}$.

Let $\ell>2$ and $|\bm{n}|>0$.
Since there is a non-zero entry in $\bm{n}$, we may assume $n_{\ell}>0$.
We define $\bm{n}^{\prime}:=(n_1,\dots,n_{\ell-1},n_{\ell}-1)$ and $G_k:=(V,\{(\ell,i)\mid i=1,\dots,k\})$ for $k=0,1,\dots,\ell-1$.
For any $i=1,\dots,\ell-1$, since $\varepsilon_G(\ell,i)=0$ and $\varepsilon_{G_{\ell-1}}(\ell,i)=1$, we have $n_{\ell}+\varepsilon_{G}(\ell,i)=n_{\ell}-1+\varepsilon_{G_{\ell-1}}(\ell,i)$.
Therefore
\begin{align*}
\mathscr{A}(\bm{n},G)=\mathscr{A}(\bm{n}^{\prime},G_{\ell-1}).
\end{align*}
Here we prove that $c\mathscr{A}(\bm{n}^{\prime},G_k)$ is inductively free with exponents $\{0,1,|\bm{n}|+1,\dots,|\bm{n}|+\ell-2,|\bm{n}|+k\}$ by induction on $k$.
Let $k=0$. Since $G_0=(V,\emptyset)$ and $|\bm{n}^{\prime}|=|\bm{n}|-1$, the arrangement $c\mathscr{A}(\bm{n}^{\prime},G_0)$ is inductively free with exponents $\{0,1,|\bm{n}|,|\bm{n}|+1,\dots,|\bm{n}|+\ell-2\}$ by the induction hypothesis.

Let $k>0$. Let
\begin{align*}
H_k:=\{x_k-x_{\ell}=n_{\ell}\}=\{x_k-x_{\ell}=n_{\ell}-1+\varepsilon_{G_{k}}(\ell,k)\}\in\mathscr{A}(\bm{n}^{\prime},G_{k})
\end{align*}
and
\begin{align*}
\overline{H}_k:=\{x_k-x_{\ell}=n_{\ell}z\}\in c\mathscr{A}(\bm{n}^{\prime},G_{k}).
\end{align*}
Then the arrangement $c\mathscr{A}(\bm{n}^{\prime},G_k)\setminus\{\overline{H}_k\}=c\mathscr{A}(\bm{n}^{\prime},G_{k-1})$ is inductively free with exponents $\{0,1,|\bm{n}|+1,\dots,|\bm{n}|+\ell-2,|\bm{n}|+k-1\}$ by the induction hypothesis.
At the same time, by Theorem \ref{thm-restAnGtoH}, we have
\begin{align*}
\left(c\mathscr{A}(\bm{n}^{\prime},G_k)\right)^{\overline{H}_k}
=c\left(\mathscr{A}(\bm{n}^{\prime},G_k)^{H_k}\right)
=c\left(\mathscr{A}((\bm{n}^{\prime})^{H_k},G_k^{H_k})\right)
=c\mathscr{A}\left((\bm{n}^{\prime})^{H_k},(V^{H_k},\emptyset))\right)
\end{align*}
and $(\bm{n}^{\prime})^{H_k}=(n^{\prime}_i)_{i\in V^{H_k}}$, where $n^{\prime}_i=
\begin{cases}
n_i&\text{if}\ i\in V^{H_k}\setminus\{u\},\\
\max\{n_k+n_{\ell},n_{\ell}-1\}=n_k+n_{\ell}&\text{if}\ i=u.
\end{cases}
$
Since $\left|V^{H_k}\right|=\ell-1$ and $\left|\left(\bm{n}^{\prime}\right)^{H_i}\right|=|\bm{n}|$, the arrangement $\left(c\mathscr{A}(\bm{n}^{\prime},G_k)\right)^{\overline{H}_k}$ is inductively free with exponents $\{0,1,|\bm{n}|+1,\dots,|\bm{n}|+\ell-2\}$ by the induction hypothesis.
Hence the arrangement $c\mathscr{A}(\bm{n}^{\prime},G_k)$ is inductively free with exponents $\{0,1,|\bm{n}|+1,\dots,|\bm{n}|+\ell-2,|\bm{n}|+k\}$ by Theorem \ref{thm-add-del-terao}.
In particular, $c\mathscr{A}(\bm{n}^{\prime},G_{\ell-1})=c\mathscr{A}(\bm{n},G)$ is inductively free with exponents $\{0,1,|\bm{n}|+1,\dots,|\bm{n}|+\ell-2,|\bm{n}|+\ell-1\}$.
\end{proof}
\begin{Cor}\label{cor-ext-Cat-free}
The cone of the extended Catalan arrangement of type A is inductively free.
\end{Cor}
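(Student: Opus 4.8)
The plan is to obtain this as an immediate specialization of Theorem~\ref{thm-free-A(n,G)-G-empty}. First I would recall the identification already noted in the introduction: when $V=\{1,\dots,\ell\}$ and $G=(V,\emptyset)$, one has $\varepsilon_G(i,j)=0$ for all distinct $i,j\in V$, so the defining condition $-n_i-\varepsilon_G(i,j)\leq c\leq n_j+\varepsilon_G(j,i)$ in \eqref{eq-defAnG} with $\bm{n}=(m,\dots,m)$ reduces to $-m\leq c\leq m$, which is precisely the defining condition of $\mathcal{C}_{\ell-1}(m)$. Hence $\mathscr{A}((m,\dots,m),(V,\emptyset))=\mathcal{C}_{\ell-1}(m)$, and consequently $c\mathscr{A}((m,\dots,m),(V,\emptyset))=c\mathcal{C}_{\ell-1}(m)$.

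With that identification in hand, the argument is simply to fix $\ell\geq 2$, take $\bm{n}:=(m,\dots,m)\in\mathbb{Z}_{\geq 0}^V$ and $E:=\emptyset$, note $|\bm{n}|=\ell m$, and invoke Theorem~\ref{thm-free-A(n,G)-G-empty}. This yields that $c\mathcal{C}_{\ell-1}(m)$ is free, and in fact inductively free, with exponents $\{0,1,\ell m+1,\ell m+2,\dots,\ell m+\ell-1\}$. For completeness I would dispose of the degenerate case $\ell=1$ separately: there $\mathcal{C}_0(m)$ is the empty arrangement, so $c\mathcal{C}_0(m)$ is the single hyperplane $\{z=0\}$, which is free.

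There is essentially no obstacle here; the corollary is a direct consequence of the theorem. The only point requiring any care is the bookkeeping that $\mathcal{C}_{\ell-1}(m)$ really is the member $\mathscr{A}((m,\dots,m),(V,\emptyset))$ of the class studied in this section, and that the cone operation on both sides matches, both of which are immediate from the definitions.
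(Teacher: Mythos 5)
Your proposal is correct and follows exactly the route the paper intends: the identification $\mathcal{C}_{\ell-1}(m)=\mathscr{A}((m,\dots,m),(V,\emptyset))$ together with Theorem~\ref{thm-free-A(n,G)-G-empty} gives the corollary immediately, which is why the paper states it without further proof. Your extra bookkeeping (the exponents $\{0,1,\ell m+1,\dots,\ell m+\ell-1\}$ and the degenerate case $\ell=1$) is fine but not needed.
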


\section{Freeness for $\mathscr{A}(\bm{n},G)$}\label{sec-free-A(n,G)}
In this section, we generalize the freeness characterization by Athanasiadis for $\mathscr{A}_m(G)$ as follows.
\begin{Them}\label{thm-A(n,G)-free-A1A2}
Let $\bm{n}\in\mathbb{Z}_{\geq 0}^{V}$. Then $c\mathscr{A}(\bm{n},G)$ is free if and only if there exists a total order $\preceq$ on $V$ such that $G$ satisfies (A1) and (A2), while in this case, the multi-set of exponents of $c\mathscr{A}(\bm{n},G)$ is $\{0,1\}\cup\{|\bm{n}|+\ell-\ord(i)+1+b_i\mid i\in V\setminus\{\min_{\preceq}(V)\}\}$, where $\ord(i):=\#\{j\in V\mid j\preceq i\}$ and $b_i:=\#\{j\in V\mid j\prec i,(i,j)\in E\}+\#\{j\in V\mid j\prec i,(j,i)\in E\}$ for $i\in V\setminus\{\min_{\preceq}(V)\}$.
\end{Them}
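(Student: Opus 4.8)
The plan is to prove the two implications separately and to read the exponent multiset off from a computation of the characteristic polynomial that is carried out along the way. The cone formula gives $\chi(c\mathscr{A}(\bm{n},G),t)=(t-1)\chi(\mathscr{A}(\bm{n},G),t)$, and since every member of $c\mathscr{A}(\bm{n},G)$ contains the line $\{x_{i}=x_{j}\ (i,j\in V),\ z=0\}$, the factor $t$ divides $\chi(c\mathscr{A}(\bm{n},G),t)$; hence $\chi(c\mathscr{A}(\bm{n},G),t)=t(t-1)\chi_{0}(t)$ for a monic $\chi_{0}\in\mathbb{Z}[t]$ of degree $\ell-1$, and by the finite field method $\chi_{0}(q)$ counts the maps $V\to\mathbb{Z}/q\mathbb{Z}$ (with one coordinate fixed to $0$) that avoid the differences forbidden by $(\bm{n},G)$, i.e. a chromatic-type polynomial of the gain graph associated with $(\bm{n},G)$, which by the signed-graph techniques of this paper satisfies a deletion--contraction recursion. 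The first technical step (call it Lemma A) is to show that, when a total order $\preceq$ with (A1) and (A2) exists, running this recursion along $\preceq$ keeps every intermediate graph of the same shape -- conditions (A1) and (A2) are exactly what is needed for this, the statement being the gain-graph analogue of the chromatic factorization for chordal graphs -- and yields $\chi_{0}(t)=\prod_{i\in V\setminus\{\min_{\preceq}V\}}\bigl(t-(|\bm{n}|+\ell-\ord(i)+1+b_{i})\bigr)$.

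The second combinatorial step (Lemma B) is the converse: if $\chi_{0}$ splits into nonnegative integer linear factors, then some total order satisfies (A1) and (A2). Here I would argue, in the spirit of Abe's treatment of the case $\bm{n}=(m,\dots,m)$, that if (A1) or (A2) fails for every order then $(\bm{n},G)$ contains a small induced obstruction (a short alternating cycle / broken configuration) whose contribution forces $\chi_{0}$ to have a complex or non-integral root. Granting Lemmas A and B, the ``only if'' direction is then immediate: if $c\mathscr{A}(\bm{n},G)$ is free, Terao's factorization theorem forces $\chi(c\mathscr{A}(\bm{n},G),t)$, hence $\chi_{0}$, to split into nonnegative integer linear factors, so by Lemma B a total order with (A1) and (A2) exists, and by Lemma A together with Terao's factorization theorem the exponents are exactly the numbers listed in the statement.

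For the ``if'' direction, assume $\preceq$ with (A1), (A2) is given; since Lemma A already shows $\chi(c\mathscr{A}(\bm{n},G),t)$ splits, it remains only to establish freeness. I would do this by induction on the pair $(\ell,\#E)$ ordered lexicographically. The base cases $\#E=0$ (for every $\ell\geq 2$) and $\ell=2$ are covered by Theorem \ref{thm-free-A(n,G)-G-empty} and by the elementary freeness of central arrangements of rank at most two, and in both the exponents match the formula. In the inductive step, put $v_{\ell}=\max_{\preceq}V$, choose an edge of $G$ incident to $v_{\ell}$ whose other endpoint is $\preceq$-largest among the neighbours of $v_{\ell}$ in $V\setminus\{v_{\ell}\}$, and let $H=\{x_{s}-x_{t}=n_{t}+1\}$ be the corresponding extreme hyperplane; by the remark following Definition \ref{def-contract-H}, $c\mathscr{A}(\bm{n},G)\setminus\{\overline{H}\}=c\mathscr{A}(\bm{n},(V,E\setminus\{(t,s)\}))$, and by Theorem \ref{thm-restAnGtoH}, $(c\mathscr{A}(\bm{n},G))^{\overline{H}}=c\mathscr{A}(\bm{n}^{H},G^{H})$. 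One then checks that $(V,E\setminus\{(t,s)\})$ still satisfies (A1), (A2) for $\preceq$ (the maximality of the chosen endpoint is precisely what keeps $(t,s)$ from being forced as a conclusion edge of (A1)), and that $(\bm{n}^{H},G^{H})$ satisfies (A1), (A2) for the order on $V^{H}$ obtained from $\preceq$ by collapsing $s,t$ to $u$ at the slot of $\max_{\preceq}\{s,t\}$; both pairs are strictly smaller in the induction parameter, hence both cones are free with exponents given by the formula, and a direct comparison (deleting $(t,s)$ lowers exactly one term $|\bm{n}|+\ell-\ord(i)+1+b_{i}$ by $1$, and the contraction data reproduces exactly the remaining terms) shows that the hypotheses of the addition--deletion theorem (Theorem \ref{thm-add-del-terao}) are met, so $c\mathscr{A}(\bm{n},G)$ is free with the asserted exponents. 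An alternative route for ``if'' is to invoke Yoshinaga's criterion: knowing $\chi$ splits, it suffices to prove that the Ziegler restriction of $c\mathscr{A}(\bm{n},G)$ to $\{z=0\}$ -- the multi-braid arrangement on $\mathcal{A}_{\ell-1}$ with multiplicity $n_{i}+n_{j}+\varepsilon_{G}(i,j)+\varepsilon_{G}(j,i)+1$ on $\{x_{i}=x_{j}\}$ -- is free with the matching exponents, which again follows under (A1), (A2) by an induction on $\ell$ using addition-type theorems for free multiarrangements.

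The step I expect to be the main obstacle is the inductive bookkeeping in the ``if'' direction (and, underneath it, Lemma A): one must show that the distinguished edge incident to $v_{\ell}$ can always be chosen so that (A1) and (A2) survive its deletion, that the contracted pair $(\bm{n}^{H},G^{H})$ of Definition \ref{def-contract-H} again satisfies (A1) and (A2) for the induced order, and that the three exponent multisets line up as addition--deletion demands. All of these reduce to finite case analyses governed by Definition \ref{def-contract-H} and by the definition of $b_{i}$, but they interact and must be carried out with care; the multiarrangement route trades this for the task of proving freeness of the associated multi-braid arrangement, which is where the results on multiarrangements advertised in the introduction would enter.
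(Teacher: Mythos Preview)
Your ``only if'' direction has a real gap: Lemma~B is doing all the work, and it is neither proved nor what Abe actually does. From freeness you extract, via Terao's factorization theorem, only that $\chi_{0}$ splits over $\mathbb{Z}_{\geq 0}$; you then want to conclude (A1)/(A2). But $\chi$ factoring is far weaker than freeness, and the ``obstruction forces a non-real root'' heuristic does not go through: the characteristic polynomial of $c\mathscr{A}(\bm{n},G)$ is not divisible by that of a localization $c\mathscr{A}((n_{a},n_{b},n_{c}),G')$ at a bad $3$-vertex induced subgraph $G'\in\{G_{3},G_{10},G_{13}\}$, so the irrational roots computed in Lemma~\ref{lem-A(n,G)-free-A1A2-l=3} do not propagate to the ambient $\chi_{0}$. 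Moreover, even after excluding those three obstructions you still have to force $S(G)$ to be signed-eliminable, and nothing in your outline addresses that. The paper's proof uses the freeness hypothesis much more strongly: by Ziegler's theorem the Ziegler restriction $(c\mathscr{A}(\bm{n},G)^{\{z=0\}},m_{\{z=0\}})=\mathcal{A}_{\ell-1}(1,\bm{n})[S(G)]$ is a \emph{free multiarrangement}, and the hard ``only if'' direction of Abe--Nuida--Numata (Theorem~\ref{thm-A_ellka-free}) then yields that $S(G)$ is signed-eliminable; separately, freeness of every localization rules out $G_{3},G_{10},G_{13}$ as induced subgraphs. These two facts combine via Abe's characterization (Theorem~\ref{thm-A1A2-SE}) to give (A1)/(A2). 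None of this is visible at the level of the characteristic polynomial alone.

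For the ``if'' direction, your addition--deletion scheme is plausible but the two inductive claims---that deleting the chosen edge at $v_{\ell}$ preserves (A1)/(A2), and that the contraction $(\bm{n}^{H},G^{H})$ of Definition~\ref{def-contract-H} again satisfies (A1)/(A2) for the collapsed order---are asserted, not checked, and the second one is delicate (restriction does \emph{not} preserve (A1)/(A2) in general; that is exactly why $c\mathcal{S}_{\ell-1}(m)$ fails to be hereditarily free for $\ell\geq 6$). The paper bypasses all of this by applying Yoshinaga's criterion (Theorem~\ref{thm-Yoshi-criterion}) directly: the Ziegler restriction is $\mathcal{A}_{\ell-1}(1,\bm{n})[S(G)]$, free by Abe--Nuida--Numata since $S(G)$ is signed-eliminable, and each proper localization is $c\mathscr{A}(\bm{n}',G')\times\Phi_{\ell-r}$ for an induced subgraph $G'$, free by induction on $\ell$. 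Note that your ``alternative route'' misstates Yoshinaga's criterion: the version used here requires freeness of the Ziegler restriction \emph{and} of all proper localizations, not merely a splitting of $\chi$ together with freeness of the Ziegler restriction. The exponents are then read off from Theorem~\ref{thm-A_ellka-free} via Theorem~\ref{thm-Zie-rest} and the identity $b_{i}=\ord(i)-1+d_{i}$.
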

To prove Theorem \ref{thm-A(n,G)-free-A1A2}, we introduce several results for characteristic polynomials, signed graphs, and multi-arrangements.
For an arrangement $\mathscr{A}$, let $\chi_{\mathscr{A}}(t)$ denote the characteristic polynomial of $\mathscr{A}$ (see \cite[Definition 2.52]{Orlik-Terao}).
\begin{Them}[cf. Corollary 2.57 in \cite{Orlik-Terao}]\label{thm-del-rest-formula}
Let $\mathscr{A}$ be an arrangement. For any $H\in\mathscr{A}$,
\begin{align*}
\chi_{\mathscr{A}}(t)=\chi_{\mathscr{A}\setminus\{H\}}(t)-\chi_{\mathscr{A}^H}(t).
\end{align*}
\end{Them}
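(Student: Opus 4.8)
The plan is to derive the formula from the behaviour of the Möbius functions of the three intersection posets involved, following \cite{Orlik-Terao}. Write $\mathscr{A}':=\mathscr{A}\setminus\{H\}$ and $\mathscr{A}'':=\mathscr{A}^H$, so that the assertion is equivalent to $\chi_{\mathscr{A}'}(t)=\chi_{\mathscr{A}}(t)+\chi_{\mathscr{A}''}(t)$. Recall that for any arrangement $\mathscr{B}$ one has $\chi_{\mathscr{B}}(t)=\sum_{X\in L(\mathscr{B})}\mu_{\mathscr{B}}(X)\,t^{\dim X}$, where $\mu_{\mathscr{B}}$ is the Möbius function of $L(\mathscr{B})$ ordered by reverse inclusion and normalized to take the value $1$ on the ambient space (which for $\mathscr{A}''$ is $H$).

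First I would record the relations among the posets. Any $X\in L(\mathscr{A})$ with $X\subseteq H$ equals $H\cap X$, hence is a flat of $\mathscr{A}^H$; conversely every flat of $\mathscr{A}^H$ has the form $H\cap(\bigcap_{K\in\mathscr{B}}K)$ and so lies in $L(\mathscr{A})$ and is contained in $H$; thus $L(\mathscr{A}'')=\{X\in L(\mathscr{A})\mid X\subseteq H\}$. On the other hand, if $X\in L(\mathscr{A})$ and $X\not\subseteq H$, then no representation of $X$ as an intersection of members of $\mathscr{A}$ can involve $H$, so $X\in L(\mathscr{A}')$; in particular $H\notin L(\mathscr{A}')$. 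Moreover every flat of $\mathscr{A}$ containing such an $X$ also fails to be contained in $H$, so the set of flats containing $X$ is literally the same in $L(\mathscr{A})$ and in $L(\mathscr{A}')$; hence $\mu_{\mathscr{A}}(X)=\mu_{\mathscr{A}'}(X)$ for all $X\not\subseteq H$. Consequently, in $\chi_{\mathscr{A}'}(t)-\chi_{\mathscr{A}}(t)$ the contributions of all flats not contained in $H$ cancel, and what remains is a sum over $X\in L(\mathscr{A}'')$.

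The crux — and the step I expect to require the most careful bookkeeping, since in contrast to the flats outside $H$ a flat $X\subseteq H$ will in general have $\mu_{\mathscr{A}}(X)\neq\mu_{\mathscr{A}'}(X)$, so a genuine recursion is unavoidable — is the termwise identity, valid for every $X\in L(\mathscr{A}'')$:
\begin{align*}
\mu_{\mathscr{A}}(X)+\mu_{\mathscr{A}''}(X)=
\begin{cases}
\mu_{\mathscr{A}'}(X)&\text{if }X\in L(\mathscr{A}'),\\
0&\text{if }X\notin L(\mathscr{A}').
\end{cases}
\end{align*}
I would prove this by induction along $L(\mathscr{A}'')$. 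The base case is $X=H$: the only flats of $\mathscr{A}$ containing $H$ are the ambient space and $H$ itself, so $\mu_{\mathscr{A}}(H)=-1$, while $\mu_{\mathscr{A}''}(H)=1$ and $H\notin L(\mathscr{A}')$, and both sides equal $0$. For $X\subsetneq H$ I would assume the identity for every $Y$ with $X\subsetneq Y\subseteq H$ and split the flats of $\mathscr{A}$ properly containing $X$ into those not contained in $H$ — on which $\mu_{\mathscr{A}}$ and $\mu_{\mathscr{A}'}$ coincide by the previous paragraph — and those contained in $H$, which are exactly the flats of $L(\mathscr{A}'')$ properly containing $X$, where the induction hypothesis applies. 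Plugging these into the defining recursions for $\mu_{\mathscr{A}}$, $\mu_{\mathscr{A}'}$ and $\mu_{\mathscr{A}''}$, and using that $X\subsetneq H$ with $X\in L(\mathscr{A})$ forces $X$ into some member of $\mathscr{A}'$ (whence $\sum_{Y\in L(\mathscr{A}'),\,Y\supseteq X}\mu_{\mathscr{A}'}(Y)=0$ because the $L(\mathscr{A}')$-closure of $X$ is a proper flat), the two cases collapse to the claimed equalities. Summing the identity against $t^{\dim X}$ over $X\in L(\mathscr{A}'')$ then yields $\chi_{\mathscr{A}'}(t)-\chi_{\mathscr{A}}(t)=\chi_{\mathscr{A}''}(t)$, which is the theorem.

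An alternative that applies directly to all the arrangements considered in this paper, since those are defined over $\mathbb{Z}$, is the finite-field method: writing $\mathscr{A}$ for an $\ell$-arrangement, for all but finitely many primes $q$ one has $\chi_{\mathscr{A}}(q)=\#(\mathbb{F}_q^{\ell}\setminus\bigcup_{K\in\mathscr{A}}K)$, $\chi_{\mathscr{A}'}(q)=\#(\mathbb{F}_q^{\ell}\setminus\bigcup_{K\in\mathscr{A}'}K)$ and $\chi_{\mathscr{A}^H}(q)=\#(H\setminus\bigcup_{K\in\mathscr{A}^H}K)$ with $H\cong\mathbb{F}_q^{\ell-1}$; the disjoint decomposition $\mathbb{F}_q^{\ell}\setminus\bigcup_{K\in\mathscr{A}'}K=(\mathbb{F}_q^{\ell}\setminus\bigcup_{K\in\mathscr{A}}K)\sqcup(H\setminus\bigcup_{K\in\mathscr{A}^H}K)$ then gives $\chi_{\mathscr{A}'}(q)=\chi_{\mathscr{A}}(q)+\chi_{\mathscr{A}^H}(q)$ for infinitely many $q$, hence as polynomials; the general characteristic-zero case reduces to this, since $\chi_{\mathscr{B}}$ depends only on the intersection poset $L(\mathscr{B})$.
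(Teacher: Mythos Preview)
The paper does not supply a proof of this statement; it is quoted directly from \cite{Orlik-Terao} (Corollary~2.57) as a standard tool, so there is no in-paper argument to compare against. Your M\"obius-function argument is correct and is in fact essentially the proof given in \cite{Orlik-Terao}: their Lemma~2.55 establishes precisely your termwise identity $\mu_{\mathscr{A}}(X)+\mu_{\mathscr{A}''}(X)=\mu_{\mathscr{A}'}(X)$ or $0$ according as $X\in L(\mathscr{A}')$ or not, by the same induction you outline, and Corollary~2.57 is then the summation step.

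One small caveat on your alternative route: the finite-field argument is entirely valid for every arrangement that actually occurs in this paper, as you correctly note, but your closing sentence reducing the general characteristic-zero statement to the integral case does not quite work---$\chi_{\mathscr{B}}$ depending only on $L(\mathscr{B})$ is true, but not every intersection poset of an arrangement over an arbitrary field of characteristic zero is realizable over $\mathbb{Z}$, so the M\"obius-function proof (or the Whitney-sum formulation) is genuinely needed if one wants the theorem in the generality stated.
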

\begin{Them}[Terao's factorization theorem \cite{Terao-fac}]\label{thm-terao-factr}
Let $\mathscr{A}$ be a central arrangement. If $\mathscr{A}$ is free with $\exp(\mathscr{A})=\{e_1,\dots,e_{\ell}\}$, then $\chi_{\mathscr{A}}(t)=\prod_{i=1}^{\ell}(t-e_i)$.
\end{Them}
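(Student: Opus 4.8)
The plan is to prove the factorization theorem via the Solomon--Terao formula, which expresses $\chi_{\mathscr{A}}(t)$ universally through the graded modules of logarithmic derivations, and then to use freeness to collapse that expression into a product of linear factors. For a finitely generated graded $S$-module $M$, write $P(M;x):=\sum_{j}(\dim_{\mathbb{K}}M_j)\,x^{j}$ for its Poincaré series, and let $D^{p}(\mathscr{A})$ denote the module of logarithmic $p$-derivations, so that $D^{0}(\mathscr{A})=S$ and $D^{1}(\mathscr{A})=D(\mathscr{A})$. First I would invoke the Solomon--Terao formula: for any nonempty central $\ell$-arrangement $\mathscr{A}$,
\[
\chi_{\mathscr{A}}(t)=(-1)^{\ell}\,\Psi(\mathscr{A};1,t),
\qquad
\Psi(\mathscr{A};x,t):=\sum_{p=0}^{\ell}P\bigl(D^{p}(\mathscr{A});x\bigr)\bigl(t(x-1)-1\bigr)^{p},
\]
where the value at $x=1$ is read as the limit. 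This identity holds for every central arrangement, independently of freeness; it is the one genuinely external input, and I would cite it from \cite{Orlik-Terao} rather than reprove it.

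Next I would bring in the hypothesis that $\mathscr{A}$ is free with $\exp(\mathscr{A})=\{e_1,\dots,e_\ell\}$. By a standard fact for free arrangements, each $D^{p}(\mathscr{A})$ is then also free and is isomorphic, as a graded module, to the exterior power $\bigwedge^{p}D(\mathscr{A})$; a homogeneous basis is indexed by the $p$-subsets $\{i_1<\cdots<i_p\}$ of $\{1,\dots,\ell\}$, the corresponding generator sitting in degree $e_{i_1}+\cdots+e_{i_p}$. Since $P(S;x)=(1-x)^{-\ell}$ and each free generator contributes a factor $x^{(\deg)}$, this yields
\[
P\bigl(D^{p}(\mathscr{A});x\bigr)=\frac{\sigma_{p}\bigl(x^{e_1},\dots,x^{e_\ell}\bigr)}{(1-x)^{\ell}},
\]
where $\sigma_p$ denotes the $p$-th elementary symmetric polynomial.

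I would then substitute these Poincaré series into $\Psi$ and collapse the sum by the elementary-symmetric generating identity $\sum_{p=0}^{\ell}\sigma_{p}(a_1,\dots,a_\ell)\,y^{p}=\prod_{i=1}^{\ell}(1+a_i y)$, applied with $a_i=x^{e_i}$ and $y=t(x-1)-1$. This turns the alternating sum into
\[
\Psi(\mathscr{A};x,t)=\frac{1}{(1-x)^{\ell}}\prod_{i=1}^{\ell}\bigl(1+x^{e_i}\bigl(t(x-1)-1\bigr)\bigr).
\]
Distributing one factor $(1-x)^{-1}$ to each term of the product, each factor becomes $\dfrac{1-x^{e_i}}{1-x}-t\,x^{e_i}$. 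Finally I would pass to the limit $x\to 1$: the apparent pole of order $\ell$ cancels exactly because every numerator factor vanishes at $x=1$, and since $\dfrac{1-x^{e_i}}{1-x}=1+x+\cdots+x^{e_i-1}\to e_i$, each factor tends to $e_i-t$. Hence $\Psi(\mathscr{A};1,t)=\prod_{i=1}^{\ell}(e_i-t)$, and the global sign $(-1)^{\ell}$ recovers $\chi_{\mathscr{A}}(t)=\prod_{i=1}^{\ell}(t-e_i)$.

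I expect the main obstacle to be twofold. The deeper point is the Solomon--Terao formula itself, whose proof rests on interpreting $\chi_{\mathscr{A}}$ through the M\"obius function together with a functorial argument on the logarithmic derivation complex; I would treat this as a cited black box. The more technical point, internal to the present argument, is the identification $D^{p}(\mathscr{A})\cong\bigwedge^{p}D(\mathscr{A})$ for free $\mathscr{A}$ together with the careful bookkeeping of the grading convention (the degree shift attached to $\deg\partial_i$). This bookkeeping is exactly what pins down the $(t(x-1)-1)^{p}$ term and the sign, so that the limit produces the factors $t-e_i$ rather than $t+e_i$; it is routine but must be carried out in a normalization consistent with the Poincaré-series convention used in the Solomon--Terao formula.
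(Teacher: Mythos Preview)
The paper does not prove this statement at all: Theorem~\ref{thm-terao-factr} is quoted as a black box from Terao's original paper \cite{Terao-fac}, with no argument given. So there is no ``paper's own proof'' to compare against.

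Your proposed argument is correct and is, in fact, the proof presented in the standard reference \cite{Orlik-Terao} (Theorem~4.137 there), which derives the factorization from the Solomon--Terao formula exactly as you outline. The computation is sound: the identification $D^{p}(\mathscr{A})\cong\bigwedge^{p}D(\mathscr{A})$ for free $\mathscr{A}$, the Poincar\'e series $P(D^{p};x)=\sigma_{p}(x^{e_1},\dots,x^{e_\ell})/(1-x)^{\ell}$, the collapse via $\sum_p\sigma_p\,y^p=\prod(1+a_iy)$, and the limit $x\to1$ giving $e_i-t$ per factor all check out, and the sign works.

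It is worth noting that this is \emph{not} Terao's original 1981 argument, since the Solomon--Terao formula itself dates from 1987. Terao's original proof proceeds by induction on $|\mathscr{A}|$ using the addition--deletion theorem for free arrangements (your paper's Theorem~\ref{thm-add-del-terao}) together with the deletion--restriction recursion for $\chi$ (Theorem~\ref{thm-del-rest-formula}): if $\mathscr{A}$, $\mathscr{A}\setminus\{H\}$, and $\mathscr{A}^{H}$ are all free with compatible exponents, the factorizations of $\chi_{\mathscr{A}\setminus\{H\}}$ and $\chi_{\mathscr{A}^{H}}$ combine to give that of $\chi_{\mathscr{A}}$. That route is more elementary in that it avoids the machinery of $D^{p}$ and the Solomon--Terao limit, but it requires a somewhat delicate induction to guarantee that one can always find a hyperplane $H$ making the triple free (this is where Terao's original work lies). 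Your Solomon--Terao approach trades that inductive subtlety for a heavier cited input, but once that input is granted the remaining computation is clean and uniform.
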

Theorem \ref{thm-del-rest-formula} is a fundamental useful tool to calculate characteristic polynomials.
In the following example and lemma, we often use the contraposition of Theorem \ref{thm-terao-factr} to prove that certain arrangements are not free.

We first prove Theorem \ref{thm-A(n,G)-free-A1A2} in the case when $\ell=3$ by determining freeness for $c\mathscr{A}(\bm{n},G)$ for all digraphs $G$ with $3$ vertices.
We give two examples.
\begin{Examp}\label{ex-A(n,G)-del-rest}
(1)\ Let $V=\{i,s,t\}$ and $E=\{(t,s)\}$.
If $i\prec s\prec t$, then $G$ satisfies (A1) and (A2).
Let $H=\{x_s-x_t=n_t+\varepsilon_{G}(t,s)\}=\{x_s-x_t=n_t+1\}\in\mathscr{A}(\bm{n},G)$.
Then $\mathscr{A}(\bm{n},G)\setminus\{H\}=\mathscr{A}(\bm{n},(V,\emptyset))$.
In other words, the deletion of the hyperplane $H$ corresponds to the deletion of the edge $(t,s)$.
Next we consider the restriction of $\mathscr{A}(\bm{n},G)$ to $\{x_s-x_t=n_t+1\}$.
Since $n_s+n_t+1>n_t$, we have $n_u^{H}=n_s+n_t+1$, while clearly $E^H=\emptyset$.
Thus $\bm{n}^H=(n_i^H,n_u^H)=(n_i,n_s+n_t+1)$ and $\mathscr{A}(\bm{n},G)^{H}=\mathscr{A}(\bm{n}^H,(\{i,u\},\emptyset))$.
By Theorem \ref{thm-free-A(n,G)-G-empty}, $c(\mathscr{A}(\bm{n},G)\setminus\{H\})$ is free with exponents $\{0,1,|\bm{n}|+1,|\bm{n}|+2\}$, while $c\mathscr{A}\left(\bm{n},G\right)^H$ is free with exponents $\{0,1,|\bm{n}|+2\}$.
Therefore by Theorem \ref{thm-add-del-terao}, $c\mathscr{A}(\bm{n},G)$ is free with exponents $\{0,1,|\bm{n}|+2,|\bm{n}|+2\}$.
\begin{figure}[H]
 \centering
\caption{The pairs of integer tuples and digraphs corresponding to the arrangement $\mathscr{A}(\bm{n},G)$, the deletion $\mathscr{A}(\bm{n},G)\setminus\{H\}$ and the restriction $\mathscr{A}(\bm{n},G)^H$ in Example \ref{ex-A(n,G)-del-rest} (1)}

\begin{tabular}{c@{\hspace{15mm}}c@{\hspace{15mm}}c}
$\mathscr{A}(\bm{n},G)$&$\mathscr{A}(\bm{n},G)\setminus\{H\}$&$\mathscr{A}(\bm{n},G)^H$\\
$(n_i,n_s,n_t)$&$(n_i,n_s,n_t)$&$(n_i,n_s+n_t+1)$\\
\begin{tikzpicture}[node distance={15mm}, main/.style = {draw, circle}] 
\node[main] (1) {$s$}; \node[main] (2) [below left of=1] {$t$}; \node[main] (3) [below right of=1] {$i$};
\draw[->,>=stealth] (2) -- (1);
\end{tikzpicture}
&\begin{tikzpicture}[node distance={15mm}, main/.style = {draw, circle}] 
\node[main] (1) {$s$}; \node[main] (2) [below left of=1] {$t$}; \node[main] (3) [below right of=1] {$i$};
\end{tikzpicture}
&\begin{tikzpicture}[node distance={15mm}, main/.style = {draw, circle}] 
\node[main] (1) {$u$}; \node[main] (2) [right of=1] {$i$};
\end{tikzpicture} 
\end{tabular}
\end{figure}

(2)\ Let $V=\{i,s,t\}$, and let $E=\{(i,t),(t,s)\}$.
Then there are no total orders on $V$ such that $G$ satisfies (A1) and (A2).
Let $H=\{x_s-x_t=n_t+1\}\in\mathscr{A}(\bm{n},G)$.
Similarly to (1), we have that $\mathscr{A}(\bm{n},G)\setminus\{H\}=\mathscr{A}(\bm{n},(V,\{(i,t)\}))$ and $\mathscr{A}\left(\bm{n},G\right)^{H}=\mathscr{A}(\bm{n}^H,(\{i,u\},\{(i,u)\}))$, where $\bm{n}^H=(n_i^H,n_u^H)=(n_i,n_s+n_t+1)$. By Theorem \ref{thm-del-rest-formula} and Theorem \ref{thm-terao-factr}, we have $\chi_{c\mathscr{A}(\bm{n},G)}(t)=t(t-1)\left(t^2-(2|\bm{n}|+5)t+|\bm{n}|^2+5|\bm{n}|+7\right)$, and hence $c\mathscr{A}(\bm{n},G)$ is not free.
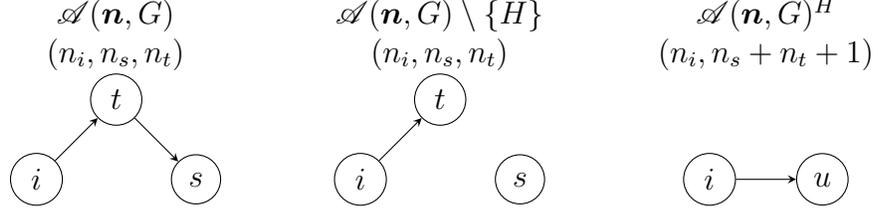
\begin{figure}[H]
 \centering
\caption{The pairs of integer tuples and digraphs corresponding to the arrangement $\mathscr{A}(\bm{n},G)$, the deletion $\mathscr{A}(\bm{n},G)\setminus\{H\}$, and the restriction $\mathscr{A}(\bm{n},G)^H$ in Example \ref{ex-A(n,G)-del-rest} (2)}

\begin{tabular}{c@{\hspace{15mm}}c@{\hspace{15mm}}c}
$\mathscr{A}(\bm{n},G)$&$\mathscr{A}(\bm{n},G)\setminus\{H\}$&$\mathscr{A}(\bm{n},G)^H$\\
$(n_i,n_s,n_t)$&$(n_i,n_s,n_t)$&$(n_i,n_s+n_t+1)$\\
\begin{tikzpicture}[node distance={15mm}, main/.style = {draw, circle}] 
\node[main] (1) {$t$}; \node[main] (2) [below left of=1] {$i$}; \node[main] (3) [below right of=1] {$s$};
\draw[->,>=stealth] (2) -- (1); \draw[->,>=stealth] (1) -- (3);
\end{tikzpicture}
&\begin{tikzpicture}[node distance={15mm}, main/.style = {draw, circle}] 
\node[main] (1) {$t$}; \node[main] (2) [below left of=1] {$i$}; \node[main] (3) [below right of=1] {$s$};
\draw[->,>=stealth] (2) -- (1);
\end{tikzpicture}
&\begin{tikzpicture}[node distance={15mm}, main/.style = {draw, circle}] 
\node[main] (1) {$i$}; \node[main] (2) [right of=1] {$u$}; \draw[->,>=stealth] (1) -- (2);
\end{tikzpicture} 
\end{tabular}
\end{figure}
\end{Examp}
We define digraphs $G_3:=(\{i,s,t\},\{(i,t),(t,s)\})$,
$G_{10}:=(\{i,s,t\},\{(i,t),(t,s),(s,i)\})$, and $G_{13}:=(\{i,s,t\},\{(i,t),(t,s),(s,i),(i,s)\})$. (We use the same notation as in Table \ref{table-AnG-ord3-1} to Table \ref{table-AnG-ord3-4}.)
\begin{Lem}\label{lem-A(n,G)-free-A1A2-l=3}
Let $\ell=3$. Then the following are equivalent.
\begin{itemize}
\item[(1)]\ The arrangement $c\mathscr{A}(\bm{n},G)$ is free.
\item[(2)]\ There exists a total order $\preceq$ on $V$ such that $G$ satisfies (A1) and (A2).
\item[(3)]\ The graph $G$ is none of $G_3$, $G_{10}$ and $G_{13}$.
\end{itemize}
\end{Lem}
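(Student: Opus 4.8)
The plan is to carry out a complete case analysis over the digraphs on the three vertices of $V$, of which there are exactly $16$ up to isomorphism; these are precisely the digraphs recorded in Tables \ref{table-AnG-ord3-1}--\ref{table-AnG-ord3-4}. Since conditions (A1) and (A2) only require the \emph{existence} of a total order, and since the freeness of $c\mathscr{A}(\bm{n},G)$ depends on $G$ only up to isomorphism (relabelling the vertices merely permutes the coordinates), it is enough to treat one representative of each isomorphism class. The equivalence (2) $\Leftrightarrow$ (3) is then purely combinatorial: for each of the $16$ digraphs one checks by hand whether some total order satisfies (A1) and (A2), where, since $|V|=3$, only the choice of the $\preceq$-maximal vertex is relevant. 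This inspection shows that the digraphs admitting no suitable order are exactly the directed path $G_3$, the directed triangle $G_{10}$, and $G_{13}$ (the directed triangle with one extra arc, creating a digon), while every other digraph on three vertices does admit one.

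For the implication $\neg(3) \Rightarrow \neg(1)$ I would argue via characteristic polynomials. For $G\cong G_3$ this is already Example \ref{ex-A(n,G)-del-rest}(2): deleting the extremal hyperplane $H=\{x_s-x_t=n_t+1\}$ removes the arc $(t,s)$ (see the remark after Definition \ref{def-contract-H}), and iterating the deletion--restriction formula (Theorem \ref{thm-del-rest-formula}) together with the characteristic polynomials of the smaller free arrangements produced along the way yields $\chi_{c\mathscr{A}(\bm{n},G)}(t)=t(t-1)\bigl(t^2-(2|\bm{n}|+5)t+|\bm{n}|^2+5|\bm{n}|+7\bigr)$; the quadratic factor has discriminant $-3$, hence no integer root, so $c\mathscr{A}(\bm{n},G)$ is not free by the contrapositive of Theorem \ref{thm-terao-factr}. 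For $G\cong G_{10}$ and $G\cong G_{13}$ one runs the same deletion--restriction computation; in each case the resulting non-linear factor of $\chi_{c\mathscr{A}(\bm{n},G)}(t)$ does not split into linear factors with integer roots for any $\bm{n}\in\mathbb{Z}_{\geq 0}^{V}$, so those arrangements are not free either. (The resulting characteristic polynomials are recorded in the tables.)

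For the implication (3) $\Rightarrow$ (1) I would prove, for each of the remaining $13$ digraphs, that $c\mathscr{A}(\bm{n},G)$ is free — with the exponents predicted by Theorem \ref{thm-A(n,G)-free-A1A2} — by induction on $\#E$, uniformly in $\bm{n}$. When $E=\emptyset$ this is Theorem \ref{thm-free-A(n,G)-G-empty}. When $E\neq\emptyset$, one observes from the list of $13$ that $G$ has an arc $e$ whose deletion $G\setminus e$ is again none of $G_3,G_{10},G_{13}$; letting $H$ be the extremal hyperplane whose deletion removes $e$, the deletion $c(\mathscr{A}(\bm{n},G)\setminus\{H\})=c\mathscr{A}(\bm{n},G\setminus e)$ is free by the induction hypothesis (see the remark after Definition \ref{def-contract-H}). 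The restriction $c(\mathscr{A}(\bm{n},G)^{H})=c\mathscr{A}(\bm{n}^{H},G^{H})$ (Theorem \ref{thm-restAnGtoH}) has $|V^{H}|=2$, so all its affine hyperplanes are mutually parallel and its cone is a pencil of planes; concretely $\mathscr{A}(\bm{n}^{H},G^{H})$ equals $\mathscr{A}((p,q),(\{i,u\},\emptyset))$ for suitable $p,q$ with $p+q+1=\#\mathscr{A}(\bm{n},G)^{H}$, so its cone is free by Theorem \ref{thm-free-A(n,G)-G-empty} in dimension $2$. A direct comparison of exponents then shows the hypotheses of the addition--deletion theorem (Theorem \ref{thm-add-del-terao}) are satisfied, yielding freeness of $c\mathscr{A}(\bm{n},G)$ together with its exponents (and in fact showing it is inductively free).

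The hard part will be the bookkeeping in the last step: for the digraphs with several arcs one must select the deleted arc so that the deletion remains in the good class of $13$, and then verify, case by case and uniformly in $\bm{n}$, that the exponent of the (automatically free) restriction fits together with those of the deletion so that Theorem \ref{thm-add-del-terao} applies. The dependence on $\bm{n}$ itself is harmless, since Theorem \ref{thm-free-A(n,G)-G-empty} and the contraction of Definition \ref{def-contract-H} are uniform in $\bm{n}$; and the restriction never causes trouble because $|V^{H}|=2$ forces its cone to be a free pencil, so only its number of hyperplanes enters the exponent comparison.
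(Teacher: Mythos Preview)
Your proposal is correct and follows essentially the same route as the paper: a direct case analysis over the sixteen isomorphism classes of digraphs on three vertices, using Theorem~\ref{thm-free-A(n,G)-G-empty} for the base case, Theorems~\ref{thm-add-del-terao} and~\ref{thm-restAnGtoH} for the inductive step on $\#E$, and Theorems~\ref{thm-del-rest-formula} and~\ref{thm-terao-factr} for the three non-free cases. The paper carries this out tersely (``by a straightforward computation\ldots see Tables~\ref{table-AnG-ord3-1}--\ref{table-AnG-ord3-4}'') and records the explicit quadratic factors for $G_3,G_{10},G_{13}$; your organization via induction on $\#E$, selecting at each step an arc whose removal stays in the good class, is exactly how one would fill in the details of that computation.
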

\begin{proof}
If $E$ is the empty set, then $c\mathscr{A}(\bm{n},G)$ is free with exponents $\{0,1,|\bm{n}|+1,|\bm{n}|+2\}$ by Theorem \ref{thm-free-A(n,G)-G-empty}.
By a straightforward computation using Theorem \ref{thm-add-del-terao}, Theorem \ref{thm-del-rest-formula}, and Theorem \ref{thm-terao-factr}, we can determine the freeness for $c\mathscr{A}(\bm{n},G)$ for the remaining digraphs $G$ with $3$ vertices. In addition, we can also determine whether there exists a total order $\preceq$ on $V$ such that $G$ satisfies (A1) and (A2) for all digraphs $G$ with $3$ vertices. See Table \ref{table-AnG-ord3-1} to Table \ref{table-AnG-ord3-4}. We also have that
\begin{align*}
\chi_{c\mathscr{A}(\bm{n},G_3)}(t)&=t(t-1)\left(t^2-(2|\bm{n}|+5)t+|\bm{n}|^2+5|\bm{n}|+7\right),\\
\chi_{c\mathscr{A}(\bm{n},G_{10})}(t)&=t(t-1)\left(t^2-(2|\bm{n}|+6)t+|\bm{n}|^2+6|\bm{n}|+11\right),\\
\chi_{c\mathscr{A}(\bm{n},G_{13})}(t)&=t(t-1)\left(t^2-(2|\bm{n}|+7)t+|\bm{n}|^2+7|\bm{n}|+13\right).
\end{align*}
\begin{table}[H]
\centering
\caption{Freeness for $c\mathscr{A}(\bm{n},G)$ and existence $\preceq$ which satisfies (A1) and (A2) for all digraphs with $3$ vertices (1)}\label{table-AnG-ord3-1}
\scalebox{0.88}{
\begin{tabular}{|c|cccc|}
\hline
 & $G_1$ & $G_2$ & $G_3$ & $G_4$ \\
Digraph&
\begin{tikzpicture}[node distance={15mm}, main/.style = {draw, circle}] 
\node[main] (1) {}; \node[main] (2) [below left of=1] {}; \node[main] (3) [below right of=1] {};
\end{tikzpicture}
&\begin{tikzpicture}[node distance={15mm}, main/.style = {draw, circle}] 
\node[main] (1) {}; \node[main] (2) [below left of=1] {}; \node[main] (3) [below right of=1] {};
\draw[->,>=stealth] (2) -- (1);
\end{tikzpicture}
&\begin{tikzpicture}[node distance={15mm}, main/.style = {draw, circle}] 
\node[main] (1) {}; \node[main] (2) [below left of=1] {}; \node[main] (3) [below right of=1] {};
\draw[->,>=stealth] (2) -- (1); \draw[->,>=stealth] (1) -- (3);
\end{tikzpicture}
&\begin{tikzpicture}[node distance={15mm}, main/.style = {draw, circle}] 
\node[main] (1) {}; \node[main] (2) [below left of=1] {}; \node[main] (3) [below right of=1] {};
\draw[->,>=stealth] (2) -- (1); \draw[->,>=stealth] (3) -- (1);
\end{tikzpicture}\\
\hline
$|E|$&$0$&$1$&$2$&$2$\\
(A1),\ (A2)&Yes&Yes&No&Yes\\
Free&Yes&Yes&No&Yes\\
exponents&$\{0,1,|\bm{n}|+1,|\bm{n}|+2\}$&$\{0,1,|\bm{n}|+2,|\bm{n}|+2\}$&N/A&$\{0,1,|\bm{n}|+2,|\bm{n}|+3\}$\\
\hline
\end{tabular}
}
\end{table}
\begin{table}[H]
\centering
\caption{Freeness for $c\mathscr{A}(\bm{n},G)$ and existence $\preceq$ which satisfies (A1) and (A2) for all digraphs with $3$ vertices (2)}\label{table-AnG-ord3-2}
\scalebox{0.88}{
\begin{tabular}{|c|cccc|}
\hline
& $G_5$ & $G_6$ & $G_7$ & $G_8$ \\
Digraph&
\begin{tikzpicture}[node distance={15mm}, main/.style = {draw, circle}] 
\node[main] (1) {}; \node[main] (2) [below left of=1] {}; \node[main] (3) [below right of=1] {};
\draw[->,>=stealth] (2) -- (1); \draw[->,>=stealth] (2) -- (3);
\end{tikzpicture}
&\begin{tikzpicture}[node distance={15mm}, main/.style = {draw, circle}] 
\node[main] (1) {}; \node[main] (2) [below left of=1] {}; \node[main] (3) [below right of=1] {};
\draw[->,>=stealth] (3) to [out=200,in=340,looseness=0.7] (2); \draw[->,>=stealth] (2) to [out=20,in=160,looseness=0.7] (3);
\end{tikzpicture}
&\begin{tikzpicture}[node distance={15mm}, main/.style = {draw, circle}] 
\node[main] (1) {}; \node[main] (2) [below left of=1] {}; \node[main] (3) [below right of=1] {};
\draw[->,>=stealth] (2) -- (1); \draw[->,>=stealth] (3) to [out=200,in=340,looseness=0.7] (2); \draw[->,>=stealth] (2) to [out=20,in=160,looseness=0.7] (3);
\end{tikzpicture}
&\begin{tikzpicture}[node distance={15mm}, main/.style = {draw, circle}] 
\node[main] (1) {}; \node[main] (2) [below left of=1] {}; \node[main] (3) [below right of=1] {};
\draw[->,>=stealth] (1) -- (2); \draw[->,>=stealth] (3) to [out=200,in=340,looseness=0.7] (2); \draw[->,>=stealth] (2) to [out=20,in=160,looseness=0.7] (3);
\end{tikzpicture}\\
\hline
$|E|$&$2$&$2$&$3$&$3$\\
(A1),\ (A2)&Yes&Yes&Yes&Yes\\
Free&Yes&Yes&Yes&Yes\\
exponents&$\{0,1,|\bm{n}|+2,|\bm{n}|+3\}$&$\{0,1,|\bm{n}|+2,|\bm{n}|+3\}$&$\{0,1,|\bm{n}|+3,|\bm{n}|+3\}$&$\{0,1,|\bm{n}|+3,|\bm{n}|+3\}$\\
\hline
\end{tabular}
}
\end{table}
\begin{table}[H]
\centering
\caption{Freeness for $c\mathscr{A}(\bm{n},G)$ and existence $\preceq$ which satisfies (A1) and (A2) for all digraphs with $3$ vertices (3)}\label{table-AnG-ord3-3}
\scalebox{0.88}{
\begin{tabular}{|c|cccc|}
\hline
& $G_9$ & $G_{10}$ & $G_{11}$ & $G_{12}$ \\
Digraph&
\begin{tikzpicture}[node distance={15mm}, main/.style = {draw, circle}] 
\node[main] (1) {}; \node[main] (2) [below left of=1] {}; \node[main] (3) [below right of=1] {};
\draw[->,>=stealth] (2) -- (1); \draw[->,>=stealth] (1) -- (3); \draw[->,>=stealth] (2) -- (3);
\end{tikzpicture}
&\begin{tikzpicture}[node distance={15mm}, main/.style = {draw, circle}] 
\node[main] (1) {}; \node[main] (2) [below left of=1] {}; \node[main] (3) [below right of=1] {};
\draw[->,>=stealth] (2) -- (1); \draw[->,>=stealth] (1) -- (3); \draw[->,>=stealth] (3) -- (2);
\end{tikzpicture}
&\begin{tikzpicture}[node distance={15mm}, main/.style = {draw, circle}] 
\node[main] (1) {}; \node[main] (2) [below left of=1] {}; \node[main] (3) [below right of=1] {};
\draw[->,>=stealth] (1) -- (2); \draw[->,>=stealth] (1) -- (3); \draw[->,>=stealth] (3) to [out=200,in=340,looseness=0.7] (2); \draw[->,>=stealth] (2) to [out=20,in=160,looseness=0.7] (3);
\end{tikzpicture}
&\begin{tikzpicture}[node distance={15mm}, main/.style = {draw, circle}] 
\node[main] (1) {}; \node[main] (2) [below left of=1] {}; \node[main] (3) [below right of=1] {};
\draw[->,>=stealth] (2) -- (1); \draw[->,>=stealth] (3) -- (1); \draw[->,>=stealth] (3) to [out=200,in=340,looseness=0.7] (2); \draw[->,>=stealth] (2) to [out=20,in=160,looseness=0.7] (3);
\end{tikzpicture}\\
\hline
$|E|$&$3$&$3$&$4$&$4$\\
(A1),\ (A2)&Yes&No&Yes&Yes\\
Free&Yes&No&Yes&Yes\\
exponents&$\{0,1,|\bm{n}|+3,|\bm{n}|+3\}$&N/A&$\{0,1,|\bm{n}|+3,|\bm{n}|+4\}$&$\{0,1,|\bm{n}|+3,|\bm{n}|+4\}$\\
\hline
\end{tabular}
}
\end{table}
\begin{table}[H]
\centering
\caption{Freeness for $c\mathscr{A}(\bm{n},G)$ and existence $\preceq$ which satisfies (A1) and (A2) for all digraphs with $3$ vertices (4)}\label{table-AnG-ord3-4}
\scalebox{0.88}{
\begin{tabular}{|c|cccc|}
\hline
& $G_{13}$ & $G_{14}$ & $G_{15}$ & $G_{16}$ \\
Digraph&
\begin{tikzpicture}[node distance={15mm}, main/.style = {draw, circle}] 
\node[main] (1) {}; \node[main] (2) [below left of=1] {}; \node[main] (3) [below right of=1] {};
\draw[->,>=stealth] (2) -- (1); \draw[->,>=stealth] (1) -- (3); \draw[->,>=stealth] (3) to [out=200,in=340,looseness=0.7] (2); \draw[->,>=stealth] (2) to [out=20,in=160,looseness=0.7] (3);
\end{tikzpicture}
&\begin{tikzpicture}[node distance={15mm}, main/.style = {draw, circle}] 
\node[main] (1) {}; \node[main] (2) [below left of=1] {}; \node[main] (3) [below right of=1] {};
\draw[->,>=stealth] (2) to [out=65,in=200,looseness=0.7] (1); \draw[->,>=stealth] (1) to [out=245,in=25,looseness=0.7] (2); \draw[->,>=stealth] (3) to [out=200,in=340,looseness=0.7] (2); \draw[->,>=stealth] (2) to [out=20,in=160,looseness=0.7] (3);
\end{tikzpicture}
&\begin{tikzpicture}[node distance={15mm}, main/.style = {draw, circle}] 
\node[main] (1) {}; \node[main] (2) [below left of=1] {}; \node[main] (3) [below right of=1] {};
\draw[->,>=stealth] (2) to [out=65,in=200,looseness=0.7] (1); \draw[->,>=stealth] (1) to [out=245,in=25,looseness=0.7] (2); \draw[->,>=stealth] (3) to [out=200,in=340,looseness=0.7] (2); \draw[->,>=stealth] (2) to [out=20,in=160,looseness=0.7] (3); \draw[->,>=stealth] (1) -- (3);
\end{tikzpicture}
&\begin{tikzpicture}[node distance={15mm}, main/.style = {draw, circle}] 
\node[main] (1) {}; \node[main] (2) [below left of=1] {}; \node[main] (3) [below right of=1] {};
\draw[->,>=stealth] (2) to [out=65,in=200,looseness=0.7] (1); \draw[->,>=stealth] (1) to [out=245,in=25,looseness=0.7] (2); \draw[->,>=stealth] (3) to [out=200,in=340,looseness=0.7] (2); \draw[->,>=stealth] (2) to [out=20,in=160,looseness=0.7] (3); \draw[->,>=stealth] (1) to [out=335,in=115,looseness=0.7] (3); \draw[->,>=stealth] (3) to [out=155,in=295,looseness=0.7] (1);
\end{tikzpicture}\\
\hline
$|E|$&$4$&$4$&$5$&$6$\\
(A1),\ (A2)&No&Yes&Yes&Yes\\
Free&No&Yes&Yes&Yes\\
exponents&N/A&$\{0,1,|\bm{n}|+3,|\bm{n}|+4\}$&$\{0,1,|\bm{n}|+4,|\bm{n}|+4\}$&$\{0,1,|\bm{n}|+4,|\bm{n}|+5\}$\\
\hline
\end{tabular}
}
\end{table}
\end{proof}

A signed graph is an undirected graph in which the set of edges is the union of sets $E_{+}$ and $E_{-}$ of edges with $E_{+}\cap E_{-}=\emptyset$ (for details, see \cite{Abe-Nuida-Numata,Nuida}).
In this paper, we only use a signed graph obtained by a digraph.
For a digraph $G=(V,E)$, we define a signed graph $S(G):=(V,E_{+}\cup E_{-})$ as follows.
\begin{itemize}
\item If $(i,j)\in E$ and $(j,i)\in E$, then $\{i,j\}\in E_{+}$.
\item If either $(i,j)$ or $(j,i)$ is contained in $E$,
then $\{i,j\}\not\in E_{+}\cup E_{-}$.
\item If $(i,j)\not\in E$ and $(j,i)\not\in E$, then $\{i,j\}\in E_{-}$.
\end{itemize}
If there exists a total order $\preceq$ on $V$ such that
$G$ satisfies (A1) and (A2), then $S(G)$ is signed eliminable
(introduced in \cite{Abe-Nuida-Numata}), that is, there exists a total order $\preceq$ on $V$ such that for $i,j,k\in V$,
\begin{itemize}
\item if $i\prec k$, $j\prec k$, $\{k,i\}\in E_{\mu}$ and $\{i,j\}\in E_{\nu}$
for $\{\mu,\nu\}=\{+,-\}$, then $\{k,j\}\in E_{\nu}$,
\item if $i\prec k$, $j\prec k$, $\{k,i\}\in E_{\mu}$ and $\{k,j\}\in E_{\mu}$
for $\mu\in\{+,-\}$, then $\{i,j\}\in E_{\mu}$.
\end{itemize}
Moreover, Abe \cite{Abe} characterized digraphs
satisfying (A1) and (A2) by using signed eliminable graphs.
\begin{Them}[Proposition 2.1 in \cite{Abe}]\label{thm-A1A2-SE}
Let $G=(V,E)$ be a digraph.
There exists a total order $\preceq$ on $V$ such that $G$ satisfies (A1) and (A2) if and only if $S(G)$ is signed eliminable and $G$ does not contain $G_3$, $G_{10}$ and $G_{13}$ as induced subgraphs.
\end{Them}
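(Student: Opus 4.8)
The plan is to prove both implications using one and the same total order, and to organize the whole argument around the observation that all three conditions are \emph{local to triples}. Indeed, both (A1) and (A2) only constrain the arcs among a vertex $k$ and two vertices $i,j$ with $i\prec k$ and $j\prec k$; since these are the only instances in which $k$ can be the top vertex, fixing a total order $\preceq$, the pair (A1)--(A2) holds for $G$ if and only if it holds for every $3$-element subset $\{i,j,k\}$ with $k$ its $\preceq$-maximum (checking the two assignments $(i,j)$ and $(j,i)$). The two signed-eliminability conditions have exactly the same shape, so signed eliminability of $S(G)$ via $\preceq$ likewise decomposes as a conjunction over triples with $\preceq$-maximal top vertex. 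Finally, since $G_3,G_{10},G_{13}$ each have three vertices, $G$ contains one of them as an induced subgraph precisely when some $3$-subset induces a copy. Thus every hypothesis and conclusion reduces to a statement about the induced sub-digraph on a triple with a marked top vertex $c$.

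For the forward implication, I would take the order $\preceq$ witnessing (A1)--(A2) and show it witnesses signed eliminability of $S(G)$. Translating the signs into arcs (sign $+$ means both arcs present, sign $-$ means no arc), each of the two signed conditions follows from a short application of (A1) and (A2): for the second condition with $\mu=+$ one applies (A2) to the two directed two-paths through $c$ to recover both arcs between $i$ and $j$, while with $\mu=-$ any arc between $i,j$ would, by (A1), force an arc at $c$, a contradiction; the first condition is handled by the analogous case split on $\{\mu,\nu\}=\{+,-\}$. Separately, no forbidden induced subgraph can occur: a copy of $G_3$, $G_{10}$ or $G_{13}$ would, under the restriction of $\preceq$, satisfy (A1)--(A2), contradicting the direct verification -- recorded in the tables of Lemma \ref{lem-A(n,G)-free-A1A2-l=3} -- that these three digraphs admit \emph{no} total order satisfying (A1)--(A2).

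For the converse I would fix an order $\preceq$ witnessing signed eliminability of $S(G)$ and, by the localization above, prove the per-triple implication: for each $3$-subset with $\preceq$-maximum $c$, if the induced digraph is not isomorphic to $G_3,G_{10},G_{13}$ and the signed conditions hold for the triple, then (A1)--(A2) hold with $c$ on top. This is a finite case analysis over $3$-vertex digraphs with a marked top vertex, classifying each of the three pairs as ``both arcs'' $(+)$, ``no arc'' $(-)$, or ``single arc'' (half). The signed conditions already force two $+$-pairs (resp.\ two $-$-pairs) at $c$ to propagate to the bottom pair, and -- crucially -- they eliminate every configuration in which such a \emph{forced} $\pm$-pair is in fact a half-edge. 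After this pruning, the only surviving patterns that still violate (A1) or (A2) turn out to be the directed two-path with non-adjacent endpoints, the directed $3$-cycle, and the directed two-path whose endpoints carry a $+$-edge, i.e.\ exactly $G_3$, $G_{10}$ and $G_{13}$, which are excluded by hypothesis; hence (A1)--(A2) hold for the triple, and therefore for $G$.

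The main obstacle is precisely this per-triple analysis in the converse. The delicate point is the interaction between half-edges, which are invisible to the signed-eliminability conditions, and the $\pm$-forcing those conditions impose: one must verify that every half-edge left unconstrained by signed eliminability is already oriented in a way compatible with (A1)--(A2), and that the three exceptional orientations that are \emph{not} so constrained are exactly the forbidden triples $G_3,G_{10},G_{13}$. Making this enumeration exhaustive -- so that no further forbidden pattern is overlooked -- is where the care is needed; the rest of the argument is the routine bookkeeping of the two localizations and the elementary translation between arcs and signs.
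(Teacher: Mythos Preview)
The paper does not give its own proof of this statement: Theorem~\ref{thm-A1A2-SE} is quoted verbatim as Proposition~2.1 of \cite{Abe} and is used as a black box in the proof of Theorem~\ref{thm-A(n,G)-free-A1A2}. There is therefore nothing in the present paper to compare your argument against.

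That said, your plan is sound as a proof strategy, and in fact mirrors the way such a statement would naturally be proved. The key reduction---that (A1)--(A2), the signed-eliminability axioms, and the forbidden-subgraph condition are all determined triple-by-triple once a total order is fixed---is correct, and it is the right organizing principle. For the forward direction your outline is complete. For the converse, the crucial claim is that \emph{any} order witnessing signed eliminability of $S(G)$ already witnesses (A1)--(A2) for $G$, provided no induced $G_3,G_{10},G_{13}$ occurs; this is exactly what your per-triple case analysis must establish. The analysis is finite (at most $3^3$ sign patterns, refined by the two orientations of each half-edge), and the signed axioms prune enough cases that the residual violations of (A1) or (A2) are precisely the three forbidden digraphs, independent of which vertex is $\preceq$-maximal. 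You have correctly identified this enumeration as the only nontrivial step; carrying it out carefully (e.g.\ by first using the second signed axiom to force the bottom sign when both top edges are $\pm$, then the first axiom to rule out mixed $+/-$ patterns at the top with an incompatible bottom, and finally checking the remaining half-edge orientations) completes the proof.
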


We next introduce important results for multi-arrangements.
A multi-arrangement is a pair $(\mathscr{A},m)$ of a central $\ell$-arrangement
$\mathscr{A}$ and a map $m:\mathscr{A}\rightarrow \mathbb{Z}_{\geq 0}$.
We say $(\mathscr{A},m)$ is free if
\begin{align*}
D(\mathscr{A},m):=\left\{\theta\in\sum_{i\in V}S\partial_i\middle|\theta(\alpha_H)\in\alpha_H^{m(H)} S\ {\rm for\ all}\ H\in\mathscr{A}\right\}
\end{align*}
is a free $S$-module, while we define a multi-set of exponents by $\{\deg\theta_1,\dots,\deg\theta_{\ell}\}$, where $\{\theta_1,\dots,\theta_{\ell}\}$ is a homogeneous basis for $D(\mathscr{A},m)$.
Let $H_0\in\mathscr{A}$. The Ziegler multiplicity $m_{H_0}$ is defined by
$m_{H_0}(X):=\#\{H^\prime\in\mathscr{A}\setminus\{H_0\}\mid H^\prime\cap H_0=X\}$ for $X\in\mathscr{A}^{H_0}$, while $(\mathscr{A}^{H_0},m_{H_0})$ is called the Ziegler restriction with respect to $H_0$.
\begin{Them}[Theorem 11 in \cite{Ziegler-multiarr}]\label{thm-Zie-rest}
If a central $\ell$-arrangement $\mathscr{A}$ is free with exponents $\{1,e_2,\dots,e_{\ell}\}$, then $(\mathscr{A}^{H_0},m_{H_0})$ is free with exponents $\{e_2,\dots,e_{\ell}\}$.
\end{Them}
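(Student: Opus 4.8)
The plan is to realize the Ziegler restriction as the reduction of a free direct summand of $D(\mathscr{A})$. Fix coordinates $x_1,\dots,x_\ell$ on the ambient space with $\alpha_{H_0}=x_\ell$, put $S=\mathbb{K}[x_1,\dots,x_\ell]$ and $T=\mathbb{K}[x_1,\dots,x_{\ell-1}]$, and let $f\mapsto\overline{f}$ denote reduction modulo $x_\ell$, i.e., the quotient $S\to T=S/(x_\ell)$. Identifying $H_0$ with $\mathbb{K}^{\ell-1}$ via the coordinates $x_1,\dots,x_{\ell-1}$, each $X\in\mathscr{A}^{H_0}$ has a defining linear form $\alpha_X\in T$. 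First I would consider the submodule $D_{H_0}(\mathscr{A}):=\{\theta\in D(\mathscr{A})\mid\theta(x_\ell)=0\}$. For any $\theta\in D(\mathscr{A})$ we have $\theta(x_\ell)\in x_\ell S$ because $H_0\in\mathscr{A}$, say $\theta(x_\ell)=x_\ell g$; then $\theta-g\theta_E\in D_{H_0}(\mathscr{A})$, where $\theta_E=\sum_i x_i\partial_i\in D(\mathscr{A})$ is the Euler derivation. This yields a graded direct sum $D(\mathscr{A})=S\theta_E\oplus D_{H_0}(\mathscr{A})$, so $D_{H_0}(\mathscr{A})$ is a graded direct summand of the free module $D(\mathscr{A})$, hence free (of rank $\ell-1$); comparing Hilbert series and using $\exp(\mathscr{A})=\{1,e_2,\dots,e_\ell\}$ and $\exp(S\theta_E)=\{1\}$, we get that $D_{H_0}(\mathscr{A})$ is free with exponents $\{e_2,\dots,e_\ell\}$.

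Next I would define the graded map $\rho\colon D_{H_0}(\mathscr{A})\to\bigoplus_{i<\ell}T\partial_i$ by $\rho\bigl(\sum_{i<\ell}f_i\partial_i\bigr)=\sum_{i<\ell}\overline{f_i}\,\partial_i$ (an element of $D_{H_0}(\mathscr{A})$ has no $\partial_\ell$-component since $\theta(x_\ell)=0$), which satisfies $\rho(g\theta)=\overline{g}\,\rho(\theta)$. The key step is to verify $\rho(\theta)\in D(\mathscr{A}^{H_0},m_{H_0})$. Fix $X\in\mathscr{A}^{H_0}$ and let $H_1,\dots,H_m$ ($m=m_{H_0}(X)$) be the hyperplanes of $\mathscr{A}\setminus\{H_0\}$ with $H_k\cap H_0=X$; write $\alpha_{H_k}=\lambda_k\alpha_X+c_kx_\ell$ with $\lambda_k\in\mathbb{K}^\times$ and $c_k\in\mathbb{K}$, so the $\alpha_{H_k}$ are pairwise non-proportional, hence pairwise coprime, in $S$. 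Since $\theta$ annihilates $x_\ell$, one has $\theta(\alpha_{H_k})=\lambda_k P$, where $P:=\sum_{i<\ell}f_i\,\partial_i\alpha_X\in S$ does not depend on $k$; and $\theta\in D(\mathscr{A})$ forces $\alpha_{H_k}\mid\theta(\alpha_{H_k})$, hence $\alpha_{H_k}\mid P$ for all $k$, hence $\prod_k\alpha_{H_k}\mid P$. Reducing modulo $x_\ell$ and using $\overline{\alpha_{H_k}}=\lambda_k\alpha_X$ gives $\alpha_X^{m}\mid\overline{P}=\rho(\theta)(\alpha_X)$, which is exactly the defining condition of $D(\mathscr{A}^{H_0},m_{H_0})$ at $X$. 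An analogous coprimality argument gives $\ker\rho=x_\ell D_{H_0}(\mathscr{A})$: if every $f_i$ is divisible by $x_\ell$, then $\theta=x_\ell\theta''$ with $\theta''=\sum_{i<\ell}(f_i/x_\ell)\partial_i$, and cancelling $x_\ell$ against each $\alpha_H$ ($H\neq H_0$), which is coprime to $x_\ell$, shows $\theta''\in D_{H_0}(\mathscr{A})$.

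Consequently $\rho$ induces an injective graded $T$-linear map $\overline{\rho}\colon D_{H_0}(\mathscr{A})/x_\ell D_{H_0}(\mathscr{A})\hookrightarrow D(\mathscr{A}^{H_0},m_{H_0})$. Reducing a homogeneous $S$-basis of $D_{H_0}(\mathscr{A})$ modulo $x_\ell$ shows the source is free over $T$ of rank $\ell-1$ with exponents $\{e_2,\dots,e_\ell\}$, so $\operatorname{im}(\overline{\rho})$ is a full-rank free graded submodule of $D(\mathscr{A}^{H_0},m_{H_0})$ possessing a homogeneous basis of total degree $\sum_{i\geq2}e_i$. By Terao's factorization theorem (Theorem \ref{thm-terao-factr}), $\sum_{i\geq2}e_i=(\sum_i e_i)-1=\#\mathscr{A}-1=\sum_{X\in\mathscr{A}^{H_0}}m_{H_0}(X)$. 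I would finish with Saito's criterion for multiarrangements (also due to Ziegler \cite{Ziegler-multiarr}): the coefficient determinant of any full-rank family of homogeneous derivations lying in $D(\mathscr{A}',m')$ is divisible by $\prod_{H'\in\mathscr{A}'}\alpha_{H'}^{m'(H')}$, hence has total degree at least $\sum_{H'}m'(H')$, with equality precisely when the family is a basis of $D(\mathscr{A}',m')$. Applied to the above basis of $\operatorname{im}(\overline{\rho})$, whose total degree equals $\sum_X m_{H_0}(X)$, this forces $\operatorname{im}(\overline{\rho})=D(\mathscr{A}^{H_0},m_{H_0})$; therefore $D(\mathscr{A}^{H_0},m_{H_0})$ is free with exponents $\{e_2,\dots,e_\ell\}$.

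The crux is the divisibility step: one must recover the full power $\alpha_X^{m_{H_0}(X)}$ rather than merely $\alpha_X$, and this is exactly where one needs both that $\theta(x_\ell)=0$ (so that the values $\theta(\alpha_{H_k})$ share the common factor $P$) and that the $\alpha_{H_k}$ remain pairwise coprime in $S$ even though their reductions all become proportional to $\alpha_X$. The only input beyond the addition--deletion material already quoted is Saito's criterion for multiarrangements; alternatively one could compute the Hilbert series of $D(\mathscr{A}^{H_0},m_{H_0})$ directly and compare, but the Saito route is shorter.
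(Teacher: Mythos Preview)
The paper does not prove this statement at all: Theorem~\ref{thm-Zie-rest} is simply quoted from Ziegler~\cite{Ziegler-multiarr} and used as a black box in the proof of Theorem~\ref{thm-A(n,G)-free-A1A2}. There is therefore nothing in the paper to compare your argument against.

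That said, your proof is correct and is essentially Ziegler's original argument. The three ingredients---splitting off the Euler derivation to obtain the free summand $D_{H_0}(\mathscr{A})$ with exponents $\{e_2,\dots,e_\ell\}$, the divisibility computation showing that reduction modulo $x_\ell$ lands in $D(\mathscr{A}^{H_0},m_{H_0})$ with the correct multiplicities, and the degree count via Saito's criterion for multiarrangements---are exactly the steps in \cite{Ziegler-multiarr}. Two minor remarks: (i) the equality $\sum_{i}e_i=\#\mathscr{A}$ that you extract from Terao's factorization theorem is already a consequence of Saito's criterion for the simple arrangement $\mathscr{A}$ (the coefficient determinant of a basis equals the defining polynomial up to a unit), so you do not actually need Theorem~\ref{thm-terao-factr} here; (ii) the assertion that a graded direct summand of a free graded $S$-module is free is standard, but if you want to avoid invoking it you can instead start from a homogeneous basis $\theta_E,\theta_2,\dots,\theta_\ell$ of $D(\mathscr{A})$ containing the Euler derivation (possible since $1\in\exp(\mathscr{A})$) and observe directly that $\theta_2-g_2\theta_E,\dots,\theta_\ell-g_\ell\theta_E$ form a basis of $D_{H_0}(\mathscr{A})$.
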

A localization is defined by $\mathscr{A}_X:=\{H\in\mathscr{A}\mid X\subseteq H\}$ for $X\in L(\mathscr{A})$.
\begin{Them}[Theorem 2.2 in \cite{Yoshinaga-ER}]\label{thm-Yoshi-criterion}
Let $\mathscr{A}$ be a central $\ell$-arrangement with $\ell\geq 4$, and let $H_0\in\mathscr{A}$.
Then $\mathscr{A}$ is free if and only if $(\mathscr{A}^{H_0},m_{H_0})$
is free and the localization $\mathscr{A}_X$ is free for any
$X\in L\left(\mathscr{A}^{H_0}\right)\setminus
\left\{\bigcap_{H\in\mathscr{A}}H\right\}$.
\end{Them}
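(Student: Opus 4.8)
The plan is to pass from modules of derivations to coherent sheaves on projective space, where freeness becomes a splitting condition that can be tested on a hyperplane. Write $\theta_E:=\sum_{i\in V}x_i\partial_i$ for the Euler derivation and decompose $D(\mathscr{A})=S\theta_E\oplus D_0(\mathscr{A})$, so that $\mathscr{A}$ is free if and only if the reflexive module $D_0(\mathscr{A})$ is free. Let $\mathcal{F}$ denote the coherent sheaf on $\mathbb{P}^{\ell-1}=\mathbb{P}(\mathbb{K}^V)$ obtained by sheafifying the graded module $D_0(\mathscr{A})$; it is reflexive of rank $\ell-1$. Since $\ell\geq 4$, reflexive graded modules over $S$ satisfy Serre's condition $S_2$ and hence have depth at least two, so $D_0(\mathscr{A})$ is recovered as $\bigoplus_{k}H^0(\mathbb{P}^{\ell-1},\mathcal{F}(k))$. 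Consequently $\mathscr{A}$ is free if and only if $\mathcal{F}$ splits as a direct sum of line bundles $\bigoplus_i\mathcal{O}(-e_i)$, and the whole argument is organized around proving this splitting.

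The forward implication is the easy one. If $\mathscr{A}$ is free, then every localization $\mathscr{A}_X$ is free, since freeness is inherited by localizations; this yields the stated condition on $\mathscr{A}_X$ for all $X\in L(\mathscr{A}^{H_0})$. Moreover, writing $\exp(\mathscr{A})=\{1,e_2,\dots,e_\ell\}$, Theorem \ref{thm-Zie-rest} shows that the Ziegler restriction $(\mathscr{A}^{H_0},m_{H_0})$ is free with exponents $\{e_2,\dots,e_\ell\}$, which is the remaining half of the conclusion.

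For the converse I would translate both hypotheses into statements about $\mathcal{F}$ along the projectivized hyperplane $\widehat{H}_0:=\mathbb{P}(H_0)\cong\mathbb{P}^{\ell-2}$. First, local freeness of $\mathcal{F}$ at a point of $\mathbb{P}^{\ell-1}$ is equivalent to freeness of the localization of $\mathscr{A}$ at the smallest flat through that point; the flats in $L(\mathscr{A}^{H_0})\setminus\{\bigcap_{H\in\mathscr{A}}H\}$ are exactly those whose projectivizations sweep out $\widehat{H}_0$, so the localization hypothesis says precisely that $\mathcal{F}$ is locally free at every point of $\widehat{H}_0$. Second, the sheaf-theoretic refinement of Ziegler's construction identifies the restriction $\mathcal{F}|_{\widehat{H}_0}$ with the sheaf $\widetilde{D(\mathscr{A}^{H_0},m_{H_0})}$ associated to the multirestriction, so that freeness of $(\mathscr{A}^{H_0},m_{H_0})$ says $\mathcal{F}|_{\widehat{H}_0}$ is a split bundle on $\mathbb{P}^{\ell-2}$. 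Letting $\alpha_0$ be the linear form defining $H_0$, local freeness at the points of $\widehat{H}_0$ makes the multiplication sequence $0\to\mathcal{F}(-1)\xrightarrow{\alpha_0}\mathcal{F}\to\mathcal{F}|_{\widehat{H}_0}\to 0$ exact, and a cohomology chase then lifts the splitting of $\mathcal{F}|_{\widehat{H}_0}$ to $\mathcal{F}$, whence $\mathscr{A}$ is free.

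The main obstacle is exactly this lifting step, which is the technical heart of the criterion: one must show that a reflexive sheaf on $\mathbb{P}^{\ell-1}$ that is locally free at the points of a hyperplane and splits upon restriction to it is itself a direct sum of line bundles. Here the assumption $\ell\geq 4$ (so that $\dim\widehat{H}_0=\ell-2\geq 2$) is essential, since the argument is of Horrocks type: I would induct on the rank, peeling off the top summand $\mathcal{O}(-e)$ of $\mathcal{F}|_{\widehat{H}_0}$, lifting the associated section and cosection through the restriction sequence using the vanishing of the relevant intermediate groups $H^1(\mathbb{P}^{\ell-1},\mathcal{F}(k))$, and splitting off a line bundle to lower the rank. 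Establishing the sheaf-level identification $\mathcal{F}|_{\widehat{H}_0}\cong\widetilde{D(\mathscr{A}^{H_0},m_{H_0})}$ is the other delicate point, as it upgrades the module-level content of Theorem \ref{thm-Zie-rest} to an isomorphism of sheaves valid wherever $\mathcal{F}$ is locally free. Once both ingredients are in place, the splitting of $\mathcal{F}$, and therefore the freeness of $\mathscr{A}$, follows.
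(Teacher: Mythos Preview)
The paper does not give its own proof of this statement: Theorem~\ref{thm-Yoshi-criterion} is quoted from \cite{Yoshinaga-ER} and used as a black box in the proof of Theorem~\ref{thm-A(n,G)-free-A1A2}. So there is no in-paper argument to compare against.

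That said, your sketch is essentially Yoshinaga's original proof: sheafify $D_0(\mathscr{A})$ to a reflexive sheaf $\mathcal{F}$ on $\mathbb{P}^{\ell-1}$, interpret the localization hypothesis as local freeness of $\mathcal{F}$ along $\widehat{H}_0$, identify $\mathcal{F}|_{\widehat{H}_0}$ with the sheaf of the Ziegler multirestriction, and then lift the splitting via a Horrocks-type argument using the restriction sequence. The forward direction you give (localizations of free arrangements are free, plus Theorem~\ref{thm-Zie-rest}) is exactly how the paper itself invokes those facts elsewhere.

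One point where your outline is a bit loose: in the lifting step you appeal to ``vanishing of the relevant intermediate groups $H^1(\mathbb{P}^{\ell-1},\mathcal{F}(k))$'' as though it were already available, but this vanishing is precisely what you must \emph{deduce} from the restriction sequence together with the splitting of $\mathcal{F}|_{\widehat{H}_0}$ (which gives $H^i(\widehat{H}_0,\mathcal{F}|_{\widehat{H}_0}(k))=0$ for $0<i<\ell-2$) and the asymptotic vanishing of cohomology for $k\ll 0$. Also, Horrocks' criterion in its usual form applies to vector bundles, whereas here $\mathcal{F}$ is only known to be locally free along $\widehat{H}_0$; one needs the observation that the intermediate-cohomology vanishing just obtained forces $D_0(\mathscr{A})$ to be a maximal Cohen--Macaulay module, hence free, without first knowing $\mathcal{F}$ is a bundle everywhere. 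With those two refinements your argument goes through and matches \cite{Yoshinaga-ER}.
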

Let $\overline{G}=(V,E_{+}\cup E_{-})$ be a signed graph.
A map $m_{\overline{G}}:\mathcal{A}_{\ell-1}\rightarrow \{-1,0,1\}$ is defined by
\begin{align*}
m_{\overline{G}}\left(\{x_i-x_j=0\}\right)=
\begin{cases}
1&\quad{\rm if}\ \{i,j\}\in E_{+},\\
-1&\quad{\rm if}\ \{i,j\}\in E_{-},\\
0&\quad{\rm otherwise},
\end{cases}
\end{align*}
while for $k\in\mathbb{Z}_{\geq 0}$ and $\bm{n}=(n_i)_{i\in V}\in\mathbb{Z}_{\geq 0}^{V}$, $\mathcal{A}_{\ell-1}(k,\bm{n})[\overline{G}]$ is defined by the multi-arrangement $(\mathcal{A}_{\ell-1},m)$, where $m\left(\{x_i-x_j=0\}\right)=2k+n_i+n_j+m_{\overline{G}}(\{x_i-x_j=0\})$.
We note that if $G$ is a digraph, then
\begin{align}\label{eq-m_SG-varepsilon}
m_{S(G)}(\{x_i-x_j=0\})=\varepsilon_{G}(i,j)+\varepsilon_{G}(j,i)-1.
\end{align}
The following is given by Abe, Nuida and Numata \cite{Abe-Nuida-Numata}.
\begin{Them}[Theorem 0.3 in \cite{Abe-Nuida-Numata}]\label{thm-A_ellka-free}
Let $\overline{G}=(V,E_{+}\cup E_{-})$ be a signed graph.
We assume that one of the following three is satisfied;
\begin{itemize}
\item[(i)]\ $k>0$,
\item[(i\hspace{-0.5mm}i)]\ $E_{-}=\emptyset$, or
\item[(i\hspace{-0.5mm}i\hspace{-0.5mm}i)]\ $k=0$, $m\left(\{x_i-x_j=0\}\right)>0$ for all $i,j\in V$ with $i\neq j$, and all the triples $\{s,i,j\}$ with $\{i,j\}\in E_{-}$ and $m(\{x_s-x_i=0\})<m(\{x_s-x_j=0\})$, it holds that $n_i>0$.
\end{itemize}
Then the multi-arrangement $\mathcal{A}_{\ell-1}(k,\bm{n})[\overline{G}]$ is free if and only if $\overline{G}$ is signed eliminable.
In this case, the exponents are
\begin{align*}
\{0\}\cup\left\{|\bm{n}|+k\ell+d_i\,\middle|\, i\in V\setminus\left\{\min_{\preceq}(V)\right\}\right\},
\end{align*}
where
\begin{align*}
d_i:=\#\{j\in V\mid j\prec i,\{j,i\}\in E_{+}\}-\#\{j\in V\mid j\prec i,\{j,i\}\in E_{-}\}
\end{align*}
for $i\in V\setminus\{\min_{\preceq}(V)\}$.
\end{Them}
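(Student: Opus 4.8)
The plan is to prove both implications by induction on $\ell=\#V$ along a signed-eliminable order, the substance lying in an explicit inductive construction of a basis that is verified through Saito's criterion. First I record that $\theta_0:=\sum_{i\in V}\partial_i$ annihilates every $x_i-x_j$ and hence lies in $D(\mathcal{A}_{\ell-1}(k,\bm{n})[\overline{G}])$ in degree $0$; this accounts for the exponent $0$, so the real task is to produce a homogeneous family $\{\theta_i\mid i\in V\setminus\{\min_{\preceq}V\}\}$ with $\deg\theta_i=|\bm{n}|+k\ell+d_i$ which, together with $\theta_0$, satisfies Saito's criterion, i.e. $\det(\theta_a(x_b))$ agrees up to a nonzero constant with $\prod_{H}\alpha_H^{m(H)}$. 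A preliminary consistency check fixes the bookkeeping: the total multiplicity is $M=k\ell(\ell-1)+(\ell-1)|\bm{n}|+(\#E_{+}-\#E_{-})$, and since each edge is counted once at its $\preceq$-larger endpoint one has $\sum_{i\neq\min}d_i=\#E_{+}-\#E_{-}$, so that $\deg\theta_0+\sum_i\deg\theta_i=M$, exactly the degree constraint that Saito's criterion demands.

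For the \emph{if} direction, assume $\overline{G}$ is signed eliminable via $\preceq$ and set $v:=\max_{\preceq}V$. Deleting $v$ yields the induced signed graph $\overline{G}'$ on $V':=V\setminus\{v\}$, which is again signed eliminable for the restricted order, so by induction $\mathcal{A}_{\#V'-1}(k,\bm{n}|_{V'})[\overline{G}']$ is free with the predicted exponents. The inductive step adjoins the hyperplanes $\{x_i-x_v=0\}$ $(i\in V')$, each carrying multiplicity $2k+n_i+n_v+m_{\overline{G}}(\{i,v\})$. I would lift the inductive basis to the enlarged coordinate ring and then construct a single new derivation $\theta_v$ of degree $|\bm{n}|+k\ell+d_v$ whose value at every hyperplane through $v$ is divisible by the required power of $x_i-x_v$; the signs of the edges at $v$ prescribe these divisibility orders, and signed-eliminability is precisely the condition making all the local constraints simultaneously solvable inside the target degree. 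The three hypotheses (i)--(iii) enter here: each guarantees, in its regime, that the multiplicities stay large enough that the correction carrying the negative $E_{-}$-contributions can be absorbed without lowering a needed divisibility order — in particular case (iii)'s positivity of every $m(\{x_i-x_j=0\})$ together with its triple condition is what rules out such a deficiency when $k=0$. Finally I would confirm that the enlarged family is a basis by evaluating the Saito determinant, which by the degree tally above has the correct total degree and, by its triangular structure with respect to $\preceq$, the correct leading term.

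For the \emph{only if} direction I would argue contrapositively. By the forbidden-configuration description underlying Theorem \ref{thm-A1A2-SE}, a graph that is not signed eliminable contains a minimal obstructing induced sub-signed-graph. One must be careful, since every rank-two multiarrangement is free: on a three-vertex localization the obstruction is a mismatch of exponents rather than a failure of freeness, whereas genuine non-freeness first appears on four-vertex (rank-three) localizations. In either case I would detect the obstruction through the characteristic polynomial, using the multiarrangement analogue of Terao's factorization theorem (Theorem \ref{thm-terao-factr}) together with the deletion--restriction recursion for characteristic polynomials (Theorem \ref{thm-del-rest-formula}): for an obstructing configuration the relevant polynomial does not factor over $\mathbb{Z}$ with roots of the prescribed shape $|\bm{n}|+k\ell+d_i$, in direct parallel with the rank-three computations recorded in Lemma \ref{lem-A(n,G)-free-A1A2-l=3}. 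Propagation from a minimal obstruction to a general non-signed-eliminable $\overline{G}$ then uses that freeness with fixed exponents is inherited by localizations, so a localized configuration carrying the wrong exponents obstructs the whole.

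The main obstacle is the inductive step of the \emph{if} direction: producing the new basis derivation $\theta_v$ of the \emph{exact} degree $|\bm{n}|+k\ell+d_v$ whose divisibility conditions hold simultaneously at all hyperplanes through $v$, and then showing the resulting Saito determinant is exactly $\prod_H\alpha_H^{m(H)}$ up to scalar. The delicate point is that the divisibility order at $\{x_i-x_v=0\}$ depends on the sign of $\{i,v\}$ in $\overline{G}$, and signed-eliminability is exactly the combinatorial guarantee that these sign-dependent local constraints are mutually consistent; making this consistency explicit, and verifying that each of the three hypotheses (i)--(iii) separately suffices to keep the construction within degree, is the technical heart of the proof.
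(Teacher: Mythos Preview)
This theorem is not proved in the present paper: it is Theorem~0.3 of Abe--Nuida--Numata \cite{Abe-Nuida-Numata}, quoted as an external input and used as a black box in the proof of Theorem~\ref{thm-A(n,G)-free-A1A2}. There is therefore no proof in this paper to compare your proposal against.

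On the proposal itself: your outline of the \emph{if} direction---inductively extending a basis along a signed-eliminable order and verifying Saito's criterion---is indeed the strategy of \cite{Abe-Nuida-Numata}, and your degree bookkeeping is correct. Your \emph{only if} direction, however, has a genuine gap. You invoke ``the multiarrangement analogue of Terao's factorization theorem (Theorem~\ref{thm-terao-factr})'' and the deletion--restriction recursion (Theorem~\ref{thm-del-rest-formula}), but both of those results as stated in this paper are for \emph{simple} arrangements. For multiarrangements the characteristic polynomial, the factorization theorem, and deletion--restriction all require the Euler-multiplicity framework of Abe--Terao--Wakefield, which is neither the content of the theorems you cite nor available in this paper. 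In \cite{Abe-Nuida-Numata} the \emph{only if} direction is handled instead by comparing local and global exponent data at small-rank flats together with Nuida's characterization \cite{Nuida} of signed-eliminable graphs by forbidden induced subgraphs; your contrapositive strategy via forbidden configurations has the right shape, but the specific analytic tools you name are not applicable.
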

We denote by $\Phi_r$ the empty $r$-arrangement.
According to the preparations above, we can prove Theorem \ref{thm-A(n,G)-free-A1A2}.
The proof is essentially the same as that of \cite[Theorem 5.3]{Abe-Nuida-Numata} and \cite[Theorem 1.2]{Abe}.
\begin{proof}[Proof of Theorem \ref{thm-A(n,G)-free-A1A2}]
When $\ell=2$, the assertion is obvious.
We prove by induction on $\ell\geq 3$ that if there exists a total order $\preceq$ on $V$ such that $G$ satisfies (A1) and (A2),
then $c\mathscr{A}(\bm{n},G)$ is free.
When $\ell=3$, it is already proved in Lemma \ref{lem-A(n,G)-free-A1A2-l=3}.
Let $\ell>3$.
We use Theorem \ref{thm-Yoshi-criterion} to prove the assertion.
Let $H_0:=\{z=0\}\in c\mathscr{A}(\bm{n},G)$.
Then $c\mathscr{A}(\bm{n},G)^{H_0}=\mathcal{A}_{\ell-1}$ and
\begin{align*}
m_{H_0}(\{x_i-x_j=0\})=n_i+n_j+\varepsilon_{G}(i,j)+\varepsilon_{G}(j,i)+1
=2+n_i+n_j+m_{S(G)}(\{x_i-x_j=0\})
\end{align*}
by the equality \eqref{eq-m_SG-varepsilon}.
Therefore $\left(c\mathscr{A}(\bm{n},G)^{H_0},m_{H_0}\right)
=\mathcal{A}_{\ell-1}(1,\bm{n})[S(G)]$.
By Theorem \ref{thm-A1A2-SE}, $S(G)$ is signed eliminable, and hence by Theorem \ref{thm-A_ellka-free}, $\left(c\mathscr{A}(\bm{n},G)^{H_0},m_{H_0}\right)$ is free.
Let $X\in L\left(c\mathscr{A}(\bm{n},G)\right)$ with $\{x_{i_1}=\cdots=x_{i_{\ell}},z=0\}\subsetneq X\subseteq H_0$, where $V=\{i_1,\dots,i_{\ell}\}$.
Then we can write $X=\{x_{i^{\prime}_1}=\cdots=x_{i^{\prime}_r},z=0\}$ for some $i^{\prime}_1,\dots,i^{\prime}_r\in V$.
Let $G^{\prime}$ be the induced subgraph of $G$ with vertices $i^{\prime}_1,\dots,i^{\prime}_r$.
Since $G$ satisfies (A1) and (A2), $G^{\prime}$ also satisfies (A1) and (A2).
By the induction hypothesis, $c\mathscr{A}((n_{i^{\prime}_1},\dots,n_{i^{\prime}_r}),G^{\prime})$ is free.
Therefore $\left(c\mathscr{A}(\bm{n},G)\right)_{X}=c\mathscr{A}((n_{i^{\prime}_1},\dots,n_{i^{\prime}_r}),G^{\prime})\times\Phi_{\ell-r}$ is also free.
By Theorem \ref{thm-Yoshi-criterion}, we complete the ``if'' part of the assertion.

We next prove the converse. We assume that $c\mathscr{A}(\bm{n},G)$ is free.
By Theorem \ref{thm-Zie-rest}, the Ziegler restriction $\left(c\mathscr{A}(\bm{n},G)^{H_0},m_{H_0}\right) =\mathcal{A}_{\ell-1}(1,\bm{n})[S(G)]$
with respect to $H_0=\{z=0\}$ is free.
Then $S(G)$ is signed eliminable by Theorem \ref{thm-A_ellka-free}.
We recall the fact that if a central arrangement $\mathscr{A}$ is free,
then a localization $\mathscr{A}_X$ is free for any $X\in L(\mathscr{A})$
(see \cite[Theorem 4.37]{Orlik-Terao}).
If $G$ contains $G^{\prime}\in\{G_3,G_{10},G_{13}\}$ as a induced subgraph
with vertices $i_1,i_2,i_3$, then the localization
$c\mathscr{A}(\bm{n},G)_{\{x_{i_1}=x_{i_2}=x_{i_3},z=0\}}
=c\mathscr{A}((n_{i_1},n_{i_2},n_{i_3}),G^{\prime})\times\Phi_{\ell-3}$
is not free by Lemma \ref{lem-A(n,G)-free-A1A2-l=3}.
Therefore $S(G)$ is signed eliminable and $G$ does not contain
$G_3$, $G_{10}$ and $G_{13}$ as a induced subgraph.
By Theorem \ref{thm-A1A2-SE}, there exists a total order $\preceq$ on $V$ such that $G$ satisfies (A1) and (A2).
We complete the ``only if'' part of the assertion.

Finally we compute exponents of $c\mathscr{A}(\bm{n},G)$.
We suppose that $c\mathscr{A}(\bm{n},G)$ is free.
Then $\left(c\mathscr{A}(\bm{n},G)^{H_0},m_{H_0}\right)$ is free with exponents $\{0\}\cup\{|\bm{n}|+\ell+d_i\mid i\in V\setminus\{\min_{\preceq}(V)\}\}$ by Theorem \ref{thm-A_ellka-free}.
For $i\in V\setminus\{\min_{\preceq}(V)\}$, we have
\begin{align*}
b_i&=\#\{j\in V\mid j\prec i,(i,j)\in E\}+\#\{j\in V\mid j\prec i,(j,i)\in E\}\\
&=\#\{j\in V\mid j\prec i\}-\#\{j\in V\mid j\prec i,(i,j)\not\in E,(j,i)\not\in E\}\\
&\qquad\qquad\qquad\qquad\qquad\qquad+\#\{j\in V\mid j\prec i,(i,j)\in E,(j,i)\in E\}\\
&=\ord(i)-1-\#\{j\in V\mid j\prec i,\{j,i\}\in E_{-}\}+\#\{j\in V\mid j\prec i,\{j,i\}\in E_{+}\}\\
&=\ord(i)-1+d_i.
\end{align*}
Therefore by Theorem \ref{thm-Zie-rest}, the multi-set of exponents of $c\mathscr{A}(\bm{n},G)$ is 
\begin{align*}
\{0,1\}\cup\left\{|\bm{n}|+\ell-\ord(i)+1+b_i\,\middle|\, i\in V\setminus\left\{\min_{\preceq}(V)\right\}\right\}.
\end{align*}
\end{proof}

\noindent
{\it Remark.}\ 
When $V=\{1,\dots\ell\}$ and $G$ satisfies (A1) and (A2), we can prove by the finite field method that
\begin{align}\label{eq-athana-chiAnG}
\chi_{\mathscr{A}(\bm{n},G)}(t)=t\prod_{1<i\leq j\leq\ell}
\left(t-|\bm{n}|-\ell+i-1-b_i\right)
\end{align}
as a simple generalization of \cite[Theorem 3.9]{Athanasiadis1}.
On the other hand, we also have the equality \eqref{eq-athana-chiAnG} by Theorem \ref{thm-A(n,G)-free-A1A2} and Theorem \ref{thm-terao-factr}.
\begin{Cor}
Let $V=\{1,\dots,\ell\}$.
If $E=\{(j,i)\mid 1\leq i<j\leq\ell\}$, then $c\mathscr{A}(\bm{n},G)$ is free.
In particular, the cone of the extended Shi arrangement of type A is free.
\end{Cor}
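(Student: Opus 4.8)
The plan is to reduce the statement immediately to Theorem~\ref{thm-A(n,G)-free-A1A2}: since $V=\{1,\dots,\ell\}$ and $E=\{(j,i)\mid 1\le i<j\le\ell\}$, it suffices to exhibit a single total order $\preceq$ on $V$ for which the digraph $G=(V,E)$ satisfies conditions (A1) and (A2); then freeness of $c\mathscr{A}(\bm{n},G)$ is immediate for every $\bm{n}\in\mathbb{Z}_{\geq 0}^{V}$.

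The choice I would make is the natural order $1\prec 2\prec\cdots\prec\ell$, so that $i\prec k$ means $i<k$ and, by the definition of $E$, $(a,b)\in E$ if and only if $a>b$. For (A1), suppose $i\prec k$, $j\prec k$ and $(i,j)\in E$; from $j\prec k$ we get $k>j$, hence $(k,j)\in E$, which already gives the conclusion (the alternative $(i,k)\in E$ is never needed). For (A2), suppose $i\prec k$, $j\prec k$ and $(i,k)\in E$, $(k,j)\in E$; but $(i,k)\in E$ forces $i>k$, contradicting $i\prec k$, so the hypothesis of (A2) is vacuous and (A2) holds trivially. Thus $G$ satisfies (A1) and (A2) with respect to this order, and Theorem~\ref{thm-A(n,G)-free-A1A2} yields that $c\mathscr{A}(\bm{n},G)$ is free.

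For the final sentence, I would recall from the introduction that when $V=\{1,\dots,\ell\}$ and $E=\{(j,i)\mid i<j\}$ one has $\mathscr{A}((m,\dots,m),G)=\mathcal{S}_{\ell-1}(m)$; specializing $\bm{n}=(m,\dots,m)$ then gives $c\mathcal{S}_{\ell-1}(m)=c\mathscr{A}(\bm{n},G)$, which is free by the preceding paragraph. I do not expect any genuine obstacle here; the only point requiring a moment's care is matching the orientation of the edges in $E$ against the chosen order, and choosing the natural (rather than the reversed) order is exactly what makes (A1) hold on sight and (A2) hold vacuously.
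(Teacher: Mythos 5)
Your proposal is correct and is exactly the intended argument: the paper presents this as an immediate corollary of Theorem \ref{thm-A(n,G)-free-A1A2}, and the natural order $1\prec 2\prec\cdots\prec\ell$ does make (A1) hold via $(k,j)\in E$ and (A2) hold vacuously, with the identification $c\mathcal{S}_{\ell-1}(m)=c\mathscr{A}((m,\dots,m),G)$ as stated in the introduction. No issues.
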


\section{The extended Shi and Catalan arrangements}\label{sec-hereditarily-free}
A central arrangement $\mathscr{A}$ is said to be hereditarily inductively free if $\mathscr{A}^X$ is inductively free for any $X\in L(\mathscr{A})$.
If $\mathscr{A}$ is hereditarily inductively free, then $\mathscr{A}$ is hereditarily free.
Since
\begin{align*}
L(\mathscr{A})=\left\{\mathbb{K}^V\right\}\cup\bigcup_{H\in\mathscr{A}}\left\{H\cap X\,\middle|\, X\in L(\mathscr{A}\setminus\{H\})\right\}
=\left\{\mathbb{K}^V\right\}\cup\bigcup_{H\in\mathscr{A}}L\left(\mathscr{A}^H\right),
\end{align*}
for a central arrangement $\mathscr{A}$, we have that
\begin{enumerate}
\item[] $\mathscr{A}$ is hereditarily inductively free,
\item[$\Leftrightarrow$] $\mathscr{A}$ is inductively free, and $\left(\mathscr{A}^H\right)^{H\cap X}=\mathscr{A}^{H\cap X}$ is inductively free for any $H\in\mathscr{A}$ and $X\in L(\mathscr{A}\setminus\{H\})$,
\item[$\Leftrightarrow$] $\mathscr{A}$ is inductively free, and $\mathscr{A}^{H}$ is hereditarily inductively free for any $H\in\mathscr{A}$.
\end{enumerate}

Let $\overline{H}\in c\mathscr{A}(\bm{n},G)$.
If $\overline{H}=\{z=0\}$, then $\left(c\mathscr{A}(\bm{n},G)\right)^{\overline{H}}$ equals the inductively free arrangement $\mathcal{A}_{\ell-1}$.
If $\overline{H}\neq\{z=0\}$, then we can write $\overline{H}=\{x_{i}-x_{j}=cz\}$ for some $i,j\in V$ and $-n_i-\varepsilon_G(i,j)\leq c\leq n_j+\varepsilon_G(j,i)$, while $\left(c\mathscr{A}(\bm{n},G)\right)^{\overline{H}}=c\mathscr{A}\left(\bm{n},G\right)^H$, where $H=\{x_{i}-x_{j}=c\}$.
Therefore $c\mathscr{A}(\bm{n},G)$ is hereditarily inductively free if and only if $c\mathscr{A}(\bm{n},G)$ is inductively free and $c\mathscr{A}(\bm{n},G)^H$ is hereditarily inductively free for any $H\in\mathscr{A}(\bm{n},G)$.
The following is the main result of this paper.
\begin{Them}\label{thm-shi-catalan-hered-free}
Let $\bm{n}\in\mathbb{Z}_{\geq 0}^{V}$ and let $G=(V,E)$ be a digraph.
\begin{itemize}
\item[(1)] If $E=\emptyset$, then $c\mathscr{A}(\bm{n},G)$ is hereditarily inductively free.
\item[(2)] We assume that $V=\{1,\dots,\ell\}$ and $E=\{(j,i)\mid i,j\in V,\ i<j\}$. Then $c\mathscr{A}(\bm{n},G)$ is hereditarily free if and only if $\ell\leq 5$.
\end{itemize}
\end{Them}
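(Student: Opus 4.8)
\medskip

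For part (1), the plan is to combine Theorem \ref{thm-free-A(n,G)-G-empty} with the fact that the class $\{\mathscr{A}(\bm{n},G)\}$ is closed under restriction (Theorem \ref{thm-restAnGtoH}). Concretely, I would prove by induction on $\ell$ that $c\mathscr{A}(\bm{n},(V,\emptyset))$ is hereditarily inductively free. The key observation is that if $E=\emptyset$ then every contraction $G^H$ has $E^H=\emptyset$ as well (this is immediate from Definition \ref{def-contract-H}: all new edges $(i,u)$ or $(u,i)$ require some edge of $G$, of which there are none). So $\mathscr{A}(\bm{n},G)^H = \mathscr{A}(\bm{n}^H,(V^H,\emptyset))$ with $|V^H|=\ell-1$, and by the inductive hypothesis $c\mathscr{A}(\bm{n}^H,(V^H,\emptyset))$ is hereditarily inductively free. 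Using the reduction spelled out just before the theorem — that $c\mathscr{A}(\bm{n},G)$ is hereditarily inductively free iff it is inductively free and every $c\mathscr{A}(\bm{n},G)^H$ is hereditarily inductively free, together with the base case that $(c\mathscr{A}(\bm{n},G))^{\{z=0\}}=\mathcal{A}_{\ell-1}$ is inductively free — the induction closes, since $c\mathscr{A}(\bm{n},(V,\emptyset))$ is itself inductively free by Theorem \ref{thm-free-A(n,G)-G-empty}. This part should be routine.

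\medskip

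For part (2), the ``if'' direction ($\ell\le 5$) should again go by showing hereditary inductive freeness. Here one must track how the Shi digraph $E=\{(j,i)\mid i<j\}$ behaves under contraction. The contractions $G^H$ no longer have empty edge sets, but one should show that every induced subgraph that can arise as a localization still satisfies (A1) and (A2), so Theorem \ref{thm-A(n,G)-free-A1A2} guarantees freeness of all the arrangements appearing in an inductive chain. For $\ell\le 5$ the number of digraphs on $\le 4$ vertices (the dimension drops by one at the top cone level, and by one more after the first restriction off $\{z=0\}$) reachable by iterated contraction is finite and small, so in principle one verifies inductive freeness of each by an explicit addition–deletion argument, building on Lemma \ref{lem-A(n,G)-free-A1A2-l=3} and the tables. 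The cleanest route is probably: show that for $\ell\le 5$ every $\mathscr{A}(\bm{n},G^X)$ arising in $L(c\mathscr{A}(\bm{n},G))$ has a digraph satisfying (A1), (A2), hence is free with the exponents from Theorem \ref{thm-A(n,G)-free-A1A2}, and then check that the exponent-containment condition $\exp(\mathscr{A}^H)\subseteq\exp(\mathscr{A}\setminus\{H\})$ needed for inductive freeness holds along a suitable deletion order.

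\medskip

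The ``only if'' direction is the heart of the matter and I expect it to be the main obstacle. One must exhibit, for $\ell=6$, some $X\in L(c\mathscr{A}(\bm{n},G))$ with $c\mathscr{A}(\bm{n},G)^X$ \emph{not} free. By Theorem \ref{thm-restAnGtoH} and the reduction above, $c\mathscr{A}(\bm{n},G)^X$ is (the cone of) some $\mathscr{A}(\bm{n}',G')$ obtained by iterated contraction, so by Theorem \ref{thm-A(n,G)-free-A1A2} it suffices to find a chain of contractions starting from the Shi digraph on $6$ vertices that produces a digraph $G'$ admitting \emph{no} total order satisfying (A1) and (A2) — equivalently, by Theorem \ref{thm-A1A2-SE}, a digraph $G'$ whose signed graph $S(G')$ fails to be signed eliminable, or which contains one of $G_3,G_{10},G_{13}$ as an induced subgraph. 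The concrete task is a careful search: pick a single hyperplane $H$ of the $6$-dimensional Shi arrangement (most likely one of the ``boundary'' hyperplanes $\{x_i-x_j=n_j+1\}$, which corresponds to deleting the edge $(j,i)$ and merging endpoints with the asymmetric rule in Remark (2)), compute $G^H$ on $5$ vertices, iterate once more to reach $4$ vertices, and identify a forbidden induced subgraph. I would then also check that for $\ell=5$ no such bad contraction exists — which, combined with part of the ``if'' argument, is what pins the threshold exactly at $5$. The delicate point throughout is that the contraction rules for edges into and out of the new vertex $u$ are asymmetric and depend on the sign of $n_s+w-n_t$, so the search must be done with the general $\bm{n}$ in mind (or one checks it survives specialization to $\bm{n}=\bm 0$, i.e.\ the honest Shi arrangement).
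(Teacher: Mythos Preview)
Your plan for part (1) is exactly what the paper does, and your ``cleanest route'' for the $\ell\le 5$ direction of part (2) --- verify that every digraph reachable by iterated contraction from the Shi digraph satisfies (A1) and (A2), then invoke Theorem \ref{thm-A(n,G)-free-A1A2} --- is also the paper's approach (carried out by computer). Note that the statement only asks for hereditary \emph{freeness} in part (2), so your extra step of checking exponent containment for \emph{inductive} freeness is unnecessary.

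For the ``only if'' direction your strategy is right, but two concrete points will save you time. First, the paper begins by reducing $\ell>6$ to $\ell=6$ via the restriction to $X=\{x_6=\cdots=x_\ell\}$ (an intersection of hyperplanes $\{x_{j-1}-x_j=0\}$), which yields the Shi digraph on $6$ vertices; you should include this step. Second, and more substantively, two contractions are \emph{not} enough: the paper takes $H_1=\{x_1-x_4=n_4+1\}$, $H_2=\{x_3-x_6=n_6+1\}$, and then a third hyperplane $H_3$ (with a case split on whether $n_5\ge n_2$), landing on a $3$-vertex digraph which is precisely $G_3$. Crucially, the intermediate digraphs $G^{H_1}$ on $5$ vertices and $(G^{H_1})^{H_2}$ on $4$ vertices \emph{do} satisfy (A1) and (A2) --- the paper remarks explicitly that the corresponding arrangements are free --- so along this path you will not find a forbidden induced subgraph at the $4$-vertex stage. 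Plan on three contractions, not two.
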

\begin{proof}
(1)\ Let $E=\emptyset$. We prove the assertion by induction on $|V|$.
If $|V|=2$, clearly $c\mathscr{A}(\bm{n},G)$ is hereditarily inductively free.
Let $|V|>2$. Then $c\mathscr{A}(\bm{n},G)$ is inductively free by Theorem \ref{thm-free-A(n,G)-G-empty}.
Since $E^H=\emptyset$ for any $H\in \mathscr{A}(\bm{n},G)$, $c\mathscr{A}(\bm{n},G)^H=c\mathscr{A}(\bm{n}^H,G^H)$ is hereditarily inductively free by induction hypothesis.
Therefore $c\mathscr{A}(\bm{n},G)$ is hereditarily inductively free.

(2)\ Let $\ell\leq 5$. For any digraph $G^{\prime}$ obtained from contraction of $G$, we can give a total order such that $G^{\prime}$ satisfies the conditions (A1) and (A2) by computer\footnote{The authors verified it by using SageMath \cite{SageMath} computer programming.}.
This means that $c\mathscr{A}(\bm{n},G)$ is hereditarily free by Theorem \ref{thm-A(n,G)-free-A1A2}.

Conversely we prove that if $\ell\geq 6$, then $c\mathscr{A}(\bm{n},G)$ is not hereditarily free.
We first consider the case $\ell>6$.
Let $X=\left\{x_6=x_7=\cdots=x_{\ell}\right\}=\bigcap_{j=7}^{\ell}\left\{x_{j-1}-x_j=0\right\}\in L\left(c\mathscr{A}(\bm{n},G)\right)$, and we define
\begin{align*}
\bm{n}^{\prime}:=\left(n_1,n_2,n_3,n_4,n_5,\max\{n_6,n_7,\dots,n_{\ell}\}\right)\ \text{and}\ G^{\prime}:=\left(\{1,\dots,6\},\left\{(j,i)\middle| 1\leq i<j\leq 6\right\}\right).
\end{align*}
Then we have $c\mathscr{A}(\bm{n},G)^X=c\mathscr{A}(\bm{n}^{\prime},G^{\prime})$.
Therefore it is enough to prove that $c\mathscr{A}(\bm{n},G)$ is not hereditarily free when $\ell=6$.
\begin{figure}[H]
 \centering
\caption{The digraph $G$ when $\ell=6$.}
\begin{tikzpicture}[node distance={15mm}, main/.style = {draw, circle}] 
\node[main] (1) {$1$}; \node[main] (2) [below left of=1] {$2$}; \node[main] (3) [below right of=1] {$3$}; \node[main] (4) [below of=2] {$4$}; \node[main] (5) [below of=3] {$5$}; \node[main] (6) [below right of=4] {$6$};
\draw[->,>=stealth] (2) -- (1); \draw[->,>=stealth] (3) -- (1); \draw[->,>=stealth] (4) -- (1); \draw[->,>=stealth] (5) -- (1); \draw[->,>=stealth] (6) -- (1);
\draw[->,>=stealth] (3) -- (2); \draw[->,>=stealth] (4) -- (2); \draw[->,>=stealth] (5) -- (2);\draw[->,>=stealth] (6) to [out=180,in=210,looseness=1.5] (2);
\draw[->,>=stealth] (4) -- (3); \draw[->,>=stealth] (5) -- (3);\draw[->,>=stealth] (6) to [out=0,in=330,looseness=1.5] (3);
\draw[->,>=stealth] (5) -- (4); \draw[->,>=stealth] (6) -- (4); \draw[->,>=stealth] (6) -- (5); 
\end{tikzpicture}
\end{figure}
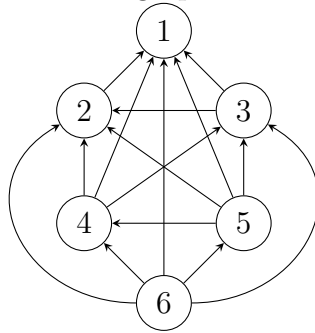

Let $\ell=6$, and let $H_1=\{x_1-x_4=n_4+\varepsilon_G(4,1)\}=\{x_1-x_4=n_4+1\}\in\mathscr{A}(\bm{n},G)$.
Then by writing $u=4$, we have that $V^{H_1}=\{2,3,4,5,6\}$, $E^{H_1}=\{(3,2),(5,2),(5,3),(5,4),(6,2),(6,3),$ $(6,4),(6,5)\}$, and $\bm{n}^{H_1}=(n_2,n_3,n_1+n_4+1,n_5,n_6)$.
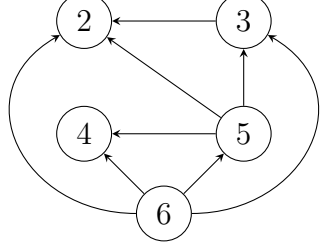
\begin{figure}[H]
 \centering
\caption{The digraph $G^{H_1}$.}
\begin{tikzpicture}[node distance={15mm}, main/.style = {draw, circle}] 
\node[main] (6) {$6$}; \node[main] (5) [above right of=6] {$5$}; \node[main] (4) [above left of=6] {$4$}; \node[main] (3) [above of=5] {$3$}; \node[main] (2) [above of=4] {$2$}; 
\draw[->,>=stealth] (3) -- (2); \draw[->,>=stealth] (5) -- (2);\draw[->,>=stealth] (6) to [out=180,in=210,looseness=1.5] (2);
\draw[->,>=stealth] (5) -- (3);\draw[->,>=stealth] (6) to [out=0,in=330,looseness=1.5] (3);
\draw[->,>=stealth] (5) -- (4); \draw[->,>=stealth] (6) -- (4); \draw[->,>=stealth] (6) -- (5);
\end{tikzpicture}
\end{figure}
Let $H_2=\{x_3-x_6=n_6+\varepsilon_G(6,3)\}=\{x_3-x_6=n_6+1\}\in\mathscr{A}(\bm{n}^{H_1},G^{H_1})$.
Then by writing $u=6$, we have $(V^{H_1})^{H_2}=\{2,4,5,6\}$, $(E^{H_1})^{H_2}=\{(5,2),(5,4),(6,2)\}$, and $(\bm{n}^{H_1})^{H_2}=(n_2,n_1+n_4+1,n_5,n_3+n_6+1)$.
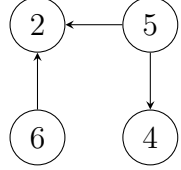
\begin{figure}[H]
 \centering
\caption{The digraph $(G^{H_1})^{H_2}$.}
\label{fig-digr-free-not-hered-free}
\begin{tikzpicture}[node distance={15mm}, main/.style = {draw, circle}] 
\node[main] (1) {$2$}; \node[main] (2) [right of=1] {$5$}; \node[main] (3) [below of=1] {$6$}; \node[main] (4) [below of=2] {$4$};
\draw[->,>=stealth] (2) -- (1); \draw[->,>=stealth] (3) -- (1); \draw[->,>=stealth] (2) -- (4);
\end{tikzpicture} 
\end{figure}
Finally let
\begin{align*}
H_3=
\begin{cases}
\{x_5-x_2=0\}\in\mathscr{A}\left((\bm{n}^{H_1})^{H_2},(G^{H_1})^{H_2}\right)&\quad{\rm if}\quad n_5\geq n_2,\\
\{x_5-x_2=n_2\}\in\mathscr{A}\left((\bm{n}^{H_1})^{H_2},(G^{H_1})^{H_2}\right)&\quad{\rm if}\quad n_5< n_2.
\end{cases}
\end{align*}
In any case, by writing $u=5$, we have $((V^{H_1})^{H_2})^{H_3}=\{4,5,6\}$ and $((E^{H_1})^{H_2})^{H_3}=\{(5,4),(6,5)\}$.
In addition, we have
\begin{align*}
((\bm{n}^{H_1})^{H_2})^{H_3}=
\begin{cases}
(n_1+n_4+1,n_5,n_3+n_6+1)\ &{\rm if}\ n_5\geq n_2,\\
(n_1+n_4+1,n_2+n_5,n_3+n_6+1)\ &{\rm if}\ n_5<n_2.
\end{cases}
\end{align*}
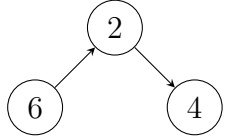
\begin{figure}[H]
 \centering
\caption{The digraph $((G^{H_1})^{H_2})^{H_3}$.}
\begin{tikzpicture}[node distance={15mm}, main/.style = {draw, circle}] 
\node[main] (1) {$2$}; \node[main] (2) [below left of=1] {$6$}; \node[main] (3) [below right of=1] {$4$};
\draw[->,>=stealth] (2) -- (1); \draw[->,>=stealth] (1) -- (3);
\end{tikzpicture} 
\end{figure}
By Lemma \ref{lem-A(n,G)-free-A1A2-l=3}, $c\mathscr{A}(((\bm{n}^{H_1})^{H_2})^{H_3},((G^{H_1})^{H_2})^{H_3})$ is not free.
This means that $c\mathscr{A}(\bm{n},G)$ is not hereditarily free.
\end{proof}

\noindent
{\it Remark.}
\begin{itemize}
\item[(1)] There exist total orders such that the digraphs $G^{H_1}$ and $(G^{H_1})^{H_2}$ satisfy the conditions (A1) and (A2). Therefore $c\mathscr{A}\left(\bm{n}^{H_1},G^{H_1}\right)$ and $c\mathscr{A}\left((\bm{n}^{H_1})^{H_2},(G^{H_1})^{H_2}\right)$ are free.
\item[(2)] The first counterexample of Orlik's conjecture is given by a $5$-arrangement with $21$ hyperplanes (which is found by Edelman and Reiner \cite{Edelman-Reiner-orlikconje}).
Now let $G$ be the same digraph as Figure \ref{fig-digr-free-not-hered-free}, and let $\overline{H}=\{x_5-x_2=0\}\in c\mathscr{A}(\bm{0},G)$.
Then $c\mathscr{A}(\bm{0},G)$ is free and $c\mathscr{A}(\bm{0},G)^{\overline{H}}$ is not free.
Since $|c\mathscr{A}(\bm{0},G)|=10$, there exists a free $4$-arrangement $\mathscr{A}_0$ with $10$ hyperplanes such that $c\mathscr{A}(\bm{0},G)=\Phi_1\times\mathscr{A}_0$.
We also have $\exp(\mathscr{A}_0)=\{1,3,3,3\}$ by Theorem \ref{thm-A(n,G)-free-A1A2}.
The arrangement $\mathscr{A}_0$ is a counterexample of Orlik's conjecture whose dimension and cardinality are less than the first counterexample.

At the same time, the $4$-arrangement with $10$ hyperplanes defined by
\begin{align}\label{eq-ideal-arr-count-Orlik-conj}
\left\{
\begin{array}{l}
\{x_1-x_2=0\},\{x_1-x_3=0\},\{x_2-x_3=0\},\{x_2+x_3=0\},\{x_1-x_4=0\},\\
\{x_1+x_4=0\},\{x_2-x_4=0\},\{x_2+x_4=0\},\{x_3-x_4=0\},\{x_3+x_4=0\}
\end{array}
\right\}
\end{align}
is also known to be a counterexample of Orlik's conjecture (see Example 3.2 and Remark 3.6 in \cite{Amend-Moller-Rohrle}).
After a change of coordinates, the defining polynomial of $\mathscr{A}_0$ coincides with that of the arrangement \eqref{eq-ideal-arr-count-Orlik-conj}.
\end{itemize}
\begin{Cor}
Let $\ell\geq 2$ and let $m$ be a nonnegative integer.
\begin{itemize}
\item[(1)] The cone $c\mathcal{C}_{\ell-1}(m)$ of the extended Catalan arrangement of type A is hereditarily inductively free.
\item[(2)] The cone $c\mathcal{S}_{\ell-1}(m)$ of the extended Shi arrangement of type A is hereditarily free if and only if $\ell\leq 5$.
\end{itemize}
\end{Cor}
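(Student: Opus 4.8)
The plan is to obtain both statements as immediate specializations of Theorem~\ref{thm-shi-catalan-hered-free}, using the identifications of the extended Catalan and Shi arrangements as members of the family $\mathscr{A}(\bm{n},G)$ already recorded in the introduction. First I would take the constant tuple $\bm{n}=(m,\dots,m)\in\mathbb{Z}_{\geq 0}^{V}$. When $G=(V,\emptyset)$ has no edges, $\varepsilon_G(i,j)=0$ for all $i\neq j$, so by the defining equality \eqref{eq-defAnG} we get $\mathscr{A}((m,\dots,m),(V,\emptyset))=\{\{x_i-x_j=c\}\mid i\neq j,\ -m\leq c\leq m\}=\mathcal{C}_{\ell-1}(m)$. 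When $V=\{1,\dots,\ell\}$ and $E=\{(j,i)\mid i<j\}$, for $i<j$ one has $\varepsilon_G(i,j)=0$ and $\varepsilon_G(j,i)=1$, so the admissible range for the pair $(i,j)$ is $-m\leq c\leq m+1$, and the pairs $(i,j)$ with $i>j$ contribute the same hyperplanes by $\{x_i-x_j=c\}=\{x_j-x_i=-c\}$; hence $\mathscr{A}((m,\dots,m),(\{1,\dots,\ell\},E))=\mathcal{S}_{\ell-1}(m)$. Consequently $c\mathcal{C}_{\ell-1}(m)$ and $c\mathcal{S}_{\ell-1}(m)$ are precisely the cones treated in parts (1) and (2) of Theorem~\ref{thm-shi-catalan-hered-free}.

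Given this translation, part (1) is immediate: Theorem~\ref{thm-shi-catalan-hered-free}(1) applied with $E=\emptyset$ says $c\mathscr{A}((m,\dots,m),(V,\emptyset))=c\mathcal{C}_{\ell-1}(m)$ is hereditarily inductively free. Part (2) is equally immediate: Theorem~\ref{thm-shi-catalan-hered-free}(2) applied with $V=\{1,\dots,\ell\}$ and $E=\{(j,i)\mid i<j\}$ says $c\mathscr{A}((m,\dots,m),(V,E))=c\mathcal{S}_{\ell-1}(m)$ is hereditarily free if and only if $\ell\leq 5$. The standing hypothesis $\ell\geq 2$ of the corollary is exactly what is needed to invoke Theorem~\ref{thm-shi-catalan-hered-free}, and since hereditary inductive freeness implies hereditary freeness, no further argument is required.

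I expect there to be no genuine obstacle in this step: all the substantive work lies upstream, in Theorem~\ref{thm-restAnGtoH} (closure of the class $\mathscr{A}(\bm{n},G)$ under restriction), Theorem~\ref{thm-A(n,G)-free-A1A2} (the graphical freeness characterization), and the finite computer check behind Theorem~\ref{thm-shi-catalan-hered-free}(2) for $\ell\leq 5$. The only care needed in writing the corollary's proof is to state the two identifications $\mathcal{C}_{\ell-1}(m)=\mathscr{A}((m,\dots,m),(V,\emptyset))$ and $\mathcal{S}_{\ell-1}(m)=\mathscr{A}((m,\dots,m),(\{1,\dots,\ell\},\{(j,i)\mid i<j\}))$ cleanly, so that the invocation of Theorem~\ref{thm-shi-catalan-hered-free} is unambiguous.
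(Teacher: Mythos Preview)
Your proposal is correct and matches the paper's own treatment: the corollary is stated without proof, as an immediate consequence of Theorem~\ref{thm-shi-catalan-hered-free} via the identifications $\mathcal{C}_{\ell-1}(m)=\mathscr{A}((m,\dots,m),(V,\emptyset))$ and $\mathcal{S}_{\ell-1}(m)=\mathscr{A}((m,\dots,m),(\{1,\dots,\ell\},\{(j,i)\mid i<j\}))$ already recorded in the introduction. Your write-up simply makes these identifications explicit, which is exactly the content needed.
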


\section*{Acknowledgments}
This work was supported by JSPS KAKENHI Grant Number JP20K14282.
The authors are grateful to Takahiro Nagaoka for helpful comments
for Theorem \ref{thm-free-A(n,G)-G-empty}.


\begin{thebibliography}{10}
\bibitem{Abe}
T.\ Abe,
On a conjecture of Athanasiadis related to freeness of a family of hyperplane arrangements.
\textit{Math. Research Letters} {\bf 19} (2012), no. 2, 469--474.
\bibitem{Abe-Nuida-Numata}
T.\ Abe, K.\ Nuida\ and\ Y.\ Numata,
Signed eliminable graphs and free multiplicities on the braid arrangement.
\textit{J. London Math. Soc.} {\bf 80} (2009), no. 1, 121--134.
\bibitem{abe2017freeness-joctsa}
T.~Abe, D.~Suyama, and S.~Tsujie,
The freeness of {Ish} arrangements,
Journal of Combinatorial Theory, Series A \textbf{146} (2017), 169--183.
\bibitem{abe2021vertex-weighted}
T.~Abe, T.~N. Tran, and S.~Tsujie,
Vertex-weighted {Digraphs} and {Freeness} of {Arrangements} {Between} {Shi} and {Ish},
Preprint (2021), arXiv:2108.02518
\bibitem{Amend-Moller-Rohrle}
N.\ Amend, T.\ M{\"o}ller\ and\ G.\ R{\"o}hrle,
Restrictions of aspherical arrangements.
\textit{Topol. Appl.} {\bf 249} (2018), 67--72.
\bibitem{Athanasiadis1}
C.\ A.\ Athanasiadis,
Characteristic polynomials of subspace arrangements and finite fields.
\textit{Adv. in Math.} {\bf 122} (1996), 193--233.
\bibitem{Athanasiadis2}
C.\ A.\ Athanasiadis,
On free deformations of the braid arrangement.
\textit{European J. Combin.} {\bf 19} (1998), 7--18.
\bibitem{Athanasiadis3}
C.\ A.\ Athanasiadis,
Deformations of Coxeter hyperplane arrangements and their characteristic polynomials.
in \textit{Arrangements} - Tokyo 1998. 1--26. Advanced Studies in Pure Mathematics {\bf 27}, Kinokuniya, Tokyo, 2000.
\bibitem{bailey????inductively-p}
G.~D. Bailey,
Inductively factored signed-graphic arrangements of hyperplanes,
Preprint.
\bibitem{bailey1997tilings}
G.~D. Bailey,
Tilings of zonotopes: {Discriminantal} arrangements, oriented matroids, and enumeration,
Ph.D. thesis, University of Minnesota, 1997.
\bibitem{Douglass}
J. M. Douglass,
The adjoint representation of a reductive group and hyperplane arrangements,
\textit{Represent. Theory} {\bf 3} (1999), 444--456.
\bibitem{Edelman-Reiner-orlikconje}
P. H. Edelman and V. Reiner,
A counterexample to Orlik's conjecture,
\textit{Proc. Amer. Math. Soc.} {\bf 118} (1993), 927--929.
\bibitem{edelman1994free-mz}
P.~H. Edelman and V.~Reiner,
Free hyperplane arrangements between {${A}_{n-1}$} and {${B}_{n}$},
Mathematische Zeitschrift \textbf{215} (1994), no.~1, 347--365.
\bibitem{Hoge-Roehrle-hered-free}
T. Hoge and G. R\"{o}hrle,
Reflection arrangements are hereditarily free,
\textit{Tohoku Math. J.} {\bf 65} (2013), 313--319.
\bibitem{Nuida}
K.\ Nuida,
A characterization of signed graphs with generalized perfect elimination orderings.
\textit{Discrete Math.} {\bf 310} (2010), no. 4, 819--831.
\bibitem{Orlik-intro-arr}
P. Orlik,
\textit{Introduction to arrangements.}
CBMS, no. 72, Amer. Math. Soc, Providence, RI, 1989.
\bibitem{Orlik-Terao}
P. Orlik and H. Terao,
\textit{Arrangements of Hyperplanes.}
Grundlehren dermatematischen Wissenschaften 300, Springer-Verlag, 1992.
\bibitem{Orlik-Terao-hered-free}
P. Orlik and H. Terao,
Coxeter arrangements are hereditarily free,
\textit{Tohoku Math. J.} {\bf 45} (1993), 369--383.
\bibitem{SageMath}
SageMath, the Sage Mathematics Software System (Version 9.3), The Sage Developers, 2021, https://www.sagemath.org.
\bibitem{suyama2019signed-dm}
D.~Suyama, M.~Torielli, and S.~Tsujie,
Signed graphs and the freeness of the {Weyl} subarrangements of type {${B}_{\ell}$},
Discrete Mathematics \textbf{342} (2019), no.~1, 233--249.
\bibitem{suyama2019vertex-weighted-dcg}
D.~Suyama and S.~Tsujie,
Vertex-{Weighted} {Graphs} and {Freeness} of {$\psi $}-{Graphical} {Arrangements},
Discrete \& Computational Geometry \textbf{61} (2019), no.~1, 185--197.
\bibitem{Terao-add-del}
H. Terao,
Arrangements of hyperplanes and their freeness I, II.
\textit{J. Fac. Sci. Univ. Tokyo} {\bf 27} (1980), 293--320.
\bibitem{Terao-fac}
H. Terao,
Generalized exponents of a free arrangement of hyperplanes
and Shephard-Todd-Brieskorn formula.
\textit{Invent. math.} {\bf 63} (1981), 159--179.
\bibitem{torielli2020freeness-tejoc}
M.~Torielli and S.~Tsujie,
Freeness of {Hyperplane} {Arrangements} between {Boolean} {Arrangements} and {Weyl} {Arrangements} of {Type} {${B}_{\ell} $},
The Electronic Journal of Combinatorics \textbf{27} (2020), no.~3, P3.10.
\bibitem{tran2022mat-free}
T.~N. Tran and S.~Tsujie,
{MAT}-free graphic arrangements and a characterization of strongly chordal graphs by edge-labeling,
Algebraic of Combinatorics, in press.
\bibitem{wang2022free-gac}
Z.~Wang and G.~Jiang,
Free {Subarrangements} of {Shi} {Arrangements},
Graphs and Combinatorics \textbf{38} (2022), no.~3, 59.
\bibitem{Yoshinaga-ER}
M. Yoshinaga,
Characterization of a free arrangement and conjecture of Edelman and Reiner.
\textit{Invent. Math.} {\bf 157} (2004), 449--454.
\bibitem{Ziegler-multiarr}
G. M. Ziegler,
Multiarrangements of hyperplanes and their freeness.
in \textit{Singularities} - Iowa City, IA, 1986, 345--359, Contemp. Math., {\bf 90}, Amer. Math. Soc., Providence, RI, 1989.
\end{thebibliography}
\end{document}